\pgfplotsset{compat=1.18}
\newtheorem{theorem}{Theorem}
\newtheorem*{introtheorem}{Theorem}
\newtheorem*{question}{Question}
\newtheorem{proposition}{Proposition}
\newtheorem{lemma}{Lemma}
\newtheorem{corollary}{Corollary}
\theoremstyle{definition}
\newtheorem{definition}{Definition}
\newtheorem{example}{Example}
\newtheorem{remark}{Remark}
\newtheorem{assumption}{Assumption}
\crefname{theorem}{Theorem}{Theorems}
\crefname{proposition}{Proposition}{Propositions}
\crefname{lemma}{Lemma}{Lemmas}
\crefname{corollary}{Corollary}{Corollaries}
\crefname{definition}{Definition}{Definitions}
\crefname{example}{Example}{Examples}
\crefname{remark}{Remark}{Remarks}
\crefname{assumption}{Assumption}{Assumptions}
\DeclareMathOperator{\rk}{rk}
\DeclareMathOperator{\Td}{Td}
\DeclareMathOperator{\ch}{ch}
\DeclareMathOperator{\Aut}{Aut}
\DeclareMathOperator{\Stab}{Stab}
\newcommand{\PP}{\mathbb{P}}
\newcommand{\RR}{\mathbb{R}}
\newcommand{\C}{\mathbb{C}}
\newcommand{\HH}{\mathbb{H}}
\newcommand{\X}{\mathcal{X}}    
\newcommand{\ev}{\mathrm{ev}}
\newcommand{\OO}{\mathcal{O}}   
\setlist[itemize]{topsep=3pt,itemsep=2pt,leftmargin=1.4em}
\setlist[enumerate]{topsep=3pt,itemsep=2pt,leftmargin=1.7em}
\title{Structural Invariance of Green--Griffiths--Demailly Thresholds on Compact Complex Orbifolds}
\author{Gunhee Cho}
\address{Department of Mathematics\\ Texas State University\\ 601 University Drive, San Marcos, TX 78666}
\email{wvx17@txstate.edu}
\urladdr{\href{https://sites.google.com/view/enjoyingmath/}{https://sites.google.com/view/enjoyingmath/}}
\author{Myungsin Cho}
\address{Department of Mathematics\\ Columbia University\\ 2990 Broadway, New York, NY 10027.}
\email{mc5942@columbia.edu}
\urladdr{\href{https://sites.google.com/view/myungsin-cho}{https://sites.google.com/view/myungsin-cho}}
\date{} 
\begin{document}
	\maketitle
\begin{abstract}
We prove that the Green–Griffiths–Demailly (GGD) hyperbolicity thresholds are structurally invariant.
In other words, the minimal jet order and asymptotic growth rate at which invariant jet differentials appear remain unchanged when passing from a compact complex manifold to any compact smooth analytic Deligne–Mumford stack (orbifold) with the same coarse Kähler class.
We establish an orbifold Riemann–Roch formula showing that only the identity sector contributes to the leading $m^n$ term of the Euler characteristic $\chi$, while all twisted sectors contribute only $O(m^{n-1})$.
Together with curvature–positivity properties of the Demailly–Semple tower, this implies that the existence range of invariant jet differentials depends solely on the coarse Kähler class—hence orbifold compactification or rigidification does not alter the GGD threshold or the hyperbolicity locus.
	\end{abstract}
	
	\section{Introduction}\label{sec:intro}
	
	The Green--Griffiths--Demailly (GGD) program, initiated by Green and Griffiths~\cite{GreenGriffiths1979} and developed extensively by Demailly~\cite{Demailly1997,Demailly2007,Demailly2011}, provides a powerful analytic and cohomological framework for understanding algebraic degeneracy of entire curves in complex projective varieties.  
	Its central idea is to construct invariant jet differentials along the Demailly--Semple (DS) tower and to evaluate their asymptotic Euler characteristics using curvature positivity and Riemann--Roch theory.  
	The minimal jet order \(k_0\) and weight slope \(\lambda_0\) for which
	\[
	H^0\!\big(X,E^{\mathrm{inv}}_{k,m}\otimes A^{-q}\big)\neq0,
	\qquad q\simeq \lambda_0 m,
	\]
	quantify the onset of hyperbolicity-type behavior predicted by the Green--Griffiths--Lang conjecture.  
	These numerical thresholds depend on the balance between vertical negativity and horizontal positivity in the curvature of the underlying Kähler class.
	
	Orbifold and stack generalizations of this program have become increasingly relevant in modern geometry; see Campana–P\u{a}un~\cite{CampanaPaun2016}, Borghesi–Tomassini~\cite{BorghesiTomassini2017}, and Toën–Vezzosi~\cite{ToenVezzosi2008}.  
	Deligne--Mumford stacks equipped with orbifold Kähler forms or log pairs, as in Abramovich–Olsson–Vistoli~\cite{AbramovichOlssonVistoli2008}, naturally arise in moduli theory and arithmetic geometry, where curvature and cohomological tools must be extended to finite quotient groupoids~\cite{MoerdijkPronk1997,Lerman2008,KeelMori1997,StacksProj04V2}.  
	However, the analytic underpinnings of the GGD framework---Bochner identities, Hörmander-type \(L^2\) estimates, and Riemann--Roch asymptotics---were originally formulated for manifolds, not orbifolds.  
	This raises the structural question: 
\begin{question}
Does the passage from a smooth variety to a smooth orbifold or stack, with the same coarse Kähler class, alter the positivity thresholds that govern the existence of invariant jet differentials?
\end{question}
	At first glance, the answer need not be obvious.  
	The orbifold Riemann--Roch theorem of Satake~\cite{Satake1956} and Kawasaki~\cite{Kawasaki1979,Kawasaki1981} involves contributions from twisted sectors with denominators determined by isotropy representations.  
	Thus, the asymptotic expansion of the Euler characteristic 
	\[
	\chi(\X,E_{k,m}\otimes \pi^*A^{-q})
	\]
	where $\pi\colon \X\to Y$ is the coarse moduli map and $A$ is a line bundle on $Y$,
might, in principle, differ from its coarse counterpart on~$Y$.
	Furthermore, while the leading term of the Euler characteristic scales by \(1/s\), where \(s=|\mathrm{Stab}_{\mathrm{gen}}|\) is the generic stabilizer order, the actual number of global sections \(h^0\) could still depend on higher cohomology groups \(H^i(\X,E_{k,m}\otimes L^{-q})\).  
	To establish structural invariance, one must show that these higher cohomology terms are suppressed by curvature positivity in the orbifold setting, ensuring that the growth of \(h^0\) and \(\chi\) coincide asymptotically.

\begin{introtheorem}[Structural invariance of GGD thresholds {[Theorem.~\ref{thm:main-threshold}]{}}]\label{thm:invariance}
		Let \(\pi\colon \X\to Y\) be the coarse moduli map of a compact smooth analytic Deligne--Mumford stack (orbifold) \(\X\), and let \(A\) be an ample line bundle on \(Y\) endowed with a smooth positively curved metric.  
		Set \(L=\pi^*A\).  
		Then there exist integers \(m_0,q_0>0\) such that for all \(m\ge m_0\) and \(q\ge q_0\)$\colon$
		
		\begin{enumerate}[label=(\roman*)]
			\item all higher cohomology groups vanish,
			\[
			H^i(\X,E^{\mathrm{inv}}_{k,m}\otimes L^{-q})=0,\quad i>0;
			\]
			\item consequently,
			\[
			h^0\!\big(\X,E^{\mathrm{inv}}_{k,m}\otimes L^{-q}\big)
			=\chi\!\big(\X,E^{\mathrm{inv}}_{k,m}\otimes L^{-q}\big)
			=\frac{1}{s}\!\int_Y\! \mathrm{ch}(E_{k,m})\,e^{m c_1(A)}\,\mathrm{Td}(TY)
			+O(m^{n-1}),
			\]
			where \(s=|\mathrm{Stab}_{\mathrm{gen}}|\);
			\item the minimal jet order and asymptotic slope at which invariant jet differentials exist depend only on the coarse Kähler class \([\omega_A]\).  
			Equivalently,
			\[
			\text{$Y$ is GGD-positive}
			\quad\Longleftrightarrow\quad
			\text{$\X$ is GGD-positive}.
			\]
		\end{enumerate}
		Thus, orbifold compactification and rigidification neither alter nor shift the GGD threshold.
	\end{introtheorem}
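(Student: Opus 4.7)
The plan is to establish parts (i) and (ii) first, and then derive (iii) as an immediate consequence. The argument rests on two ingredients: an orbifold vanishing theorem adapted to the Demailly--Semple (DS) tower of $\X$, and an asymptotic analysis of Kawasaki--Riemann--Roch showing that only the identity sector contributes to the top-degree expansion in $m$.

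For (i), I would realize $E^{\mathrm{inv}}_{k,m}$ as a direct image $(\pi_k)_*\OO_{\X_k}(\mathbf{a}\cdot m)$ from the $k$-th stage $\X_k$ of the orbifold DS tower, where $\mathbf{a}=(a_1,\ldots,a_k)$ is an admissible weight. Following Demailly, the tautological line bundles $\OO_{\X_j}(1)$ admit Finsler metrics whose curvature is strictly positive along the fibres of $\X_j\to\X_{j-1}$, with horizontal negative part controlled by a fixed multiple of the base Kähler class. Twisting by $\pi_k^*L^{-q}$ shifts the horizontal curvature by $-q\,c_1(A)$, and choosing $q\ge q_0$ large relative to the $m$-independent curvature constants renders the total line bundle Nakano semipositive modulo a coherent ideal supported on the DS base locus. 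An orbifold Nadel--Kodaira vanishing theorem---valid on smooth analytic Deligne--Mumford stacks because the local quotient structure respects $L^2$ Dolbeault cohomology---then yields $H^i(\X_k,\OO_{\X_k}(\mathbf{a}\cdot m)\otimes\pi_k^*L^{-q})=0$ for $i>0$. A standard Leray collapse along $\pi_k$, whose fibres are weighted projective towers with no higher cohomology in the relevant line bundles, propagates the vanishing down to $\X$, establishing (i).

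For (ii), since (i) gives $h^0=\chi$, I would apply Kawasaki's orbifold Hirzebruch--Riemann--Roch. The inertia stack decomposes as $I(\X)=\X\sqcup\bigsqcup_{[g]\ne[e]}\X^g$ with each twisted sector $\X^g$ of complex dimension strictly less than $n$. The identity sector contributes exactly the coarse integral $\tfrac{1}{s}\int_Y\ch(E_{k,m})\,e^{m c_1(A)}\,\Td(TY)$, which realizes the main term of (ii). For each twisted sector, the Kawasaki integrand is a smooth characteristic class on a stack of dimension at most $n-1$, and the isotropy acts by roots of unity that contribute only to the Todd denominators, not to the polynomial degree in $m$. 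The strict dimension drop therefore forces each twisted-sector contribution to be $O(m^{n-1})$ relative to the identity-sector scaling, summing to the claimed error term. Part (iii) is then immediate: the leading Chern number is $1/s$ times its counterpart on $Y$, so its sign, and hence GGD-positivity, is preserved. Consequently the minimal jet order $k_0$ and asymptotic slope $\lambda_0$ at which invariant jet differentials appear coincide on $\X$ and on $Y$.

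I expect Step (i) to be the principal obstacle. Transferring Demailly's Morse--Nadel curvature estimates to the orbifold tower requires controlling the isotropy-fixed strata appearing at each stage $\X_j$ and verifying that the admissible weight remains strictly positive after pushforward, without picking up negative boundary contributions from the singular locus of the inertia stack. Once this vanishing is secured with bounds uniform in $m$ at slope-controlled $q$, the Kawasaki asymptotics and the deduction of threshold invariance are essentially formal consequences.
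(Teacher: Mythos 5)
Your overall strategy mirrors the paper's: vanishing to pass from $h^0$ to $\chi$, then orbifold Riemann--Roch with twisted sectors contributing only $O(m^{n-1})$, and threshold invariance falling out from the $1/s$-scaling of the leading coefficient. However, there are two genuine issues, one methodological and one a concrete error.

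First, the sign in your vanishing step is backwards. You write the horizontal curvature of $\OO_{\X_k}(\mathbf{a})$ as bounded below by a negative multiple of $\omega_A$, observe that tensoring with $\pi_k^*L^{-q}$ shifts it by $-q\,c_1(A)$, and then claim that ``choosing $q\ge q_0$ large'' renders the bundle Nakano semipositive. But increasing $q$ makes the horizontal curvature \emph{more} negative, not less; the twist $L^{-q}$ works against you. The intended regime is the opposite: one needs $m$ large relative to $q$ (or $q$ bounded by a fixed slope $\lambda m$), so that the positive contribution scaling with $m$ dominates. The paper's Lemma~\ref{lem:eq-vanishing} states exactly this (``for $m\gg q$\,, this form is Nakano positive''), and its body theorem couches this in the hypotheses (H1)--(H4) of Assumption~\ref{assump:curv}. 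In particular, hypothesis (H3), \emph{Semple positivity}, assumes $\OO_{\X_{k+1}}(1)$ is globally semipositive, not merely fiberwise positive with horizontal negativity; this nonstandard strengthening is precisely what allows the Bochner--Kodaira--Nakano vanishing to go through directly. Your proposal uses the standard Demailly picture with genuine horizontal negativity, in which Kodaira-type vanishing fails; you would then need the Nadel/multiplier-ideal route you gesture at, but ``Nakano semipositive modulo a coherent ideal supported on the DS base locus'' only gives vanishing of $H^i(\X_k,\mathcal{J}\otimes(\cdot))$, not of $H^i(\X_k,\cdot)$ itself, and your sketch does not close this gap.

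Second, a methodological divergence worth noting: you decompose via the inertia stack $I(\X)=\X\sqcup\bigsqcup_{[g]\ne[e]}\X^g$ and integrate over twisted sectors. The paper deliberately avoids the inertia formalism and works chartwise in the Satake--Kawasaki style, summing fixed-point integrals over $g\in G_i$ on each quotient chart $[U_i/G_i]$ and gluing by equivariance (see Theorem~\ref{thm:kawasaki-chartwise} and the remark preceding Lemma~\ref{lem:jet-preserve-groupoid-min}). The two formalisms give the same asymptotics, but the chartwise route keeps the entire argument within the framework of finite-group equivariance already needed for the Demailly--Semple descent, which is why the paper prefers it. Your Leray collapse from $\X_k$ down to $\X$ is also not used in the paper, which works directly with the pushed-forward bundle $E_{k,m}^{\mathrm{inv}}$ on $\X$; this is a minor difference and either route would work once the vanishing on $\X_k$ is secured.
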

	
	The proof combines two analytic–cohomological mechanisms.  
	First, the curvature–negativity–positivity package shows that vertical negativity of \(\ev_0^*A^{-1}\) on the \(1\)-jet bundle produces fiberwise positivity of the tautological bundle \(\OO_{\X_1}(1)\) via the Chern curvature formula on projectivized bundles; this positivity then propagates along the DS tower, ensuring semipositivity horizontally and strict positivity vertically.  
	Second, a chartwise version of the Satake–Kawasaki–Toën Riemann--Roch theorem expresses the Euler characteristic as
	\[
	\chi(\X,E_{k,m}\otimes L^{-q})
	=\frac{1}{s}\int_Y \mathrm{ch}(E_{k,m})\,e^{m c_1(A)}\mathrm{Td}(TY)
	+O(m^{n-1}),
	\]
	with all twisted-sector terms of order \(O(m^{n-1})\).  
	Together with an orbifold Kodaira-type vanishing theorem for DS bundles, which ensures that higher cohomology groups vanish for \(m,q\gg0\), the asymptotic growth of \(h^0\) and \(\chi\) coincide, yielding the stated invariance.
	
	This conclusion situates the structural invariance of GGD thresholds at the intersection of analytic and stack-theoretic geometry.  
	It refines Demailly’s curvature approach~\cite{Demailly1997,Demailly2011} and P\u{a}un’s vector-field method~\cite{Paun2008}, while connecting Satake–Kawasaki index theory~\cite{Kawasaki1979,Kawasaki1981} and modern stack Riemann--Roch theorems~\cite{Toen1999,Vistoli1989}.  
	By aligning orbifold HRR asymptotics with curvature positivity, it confirms that orbifold structures preserve, rather than disturb, the hyperbolicity thresholds predicted by the Green--Griffiths--Lang conjecture.  
	In particular, higher orbifold Betti numbers or stabilizers may influence lower-order corrections but have no effect on the leading asymptotic behavior that determines the GGD threshold.
	
	The remainder of the paper proceeds as follows.  
	Section~\ref{sec:orbifold-hrr-ds} establishes the chartwise Riemann--Roch formula compatible with orbifold descent and applies it to jet bundles on $\X_k$.  
	Section~\ref{sec:stack-ggd} develops the curvature–positivity package on the DS tower, derives \(L^2\)-vanishing and Bochner inequalities on orbifold charts, and proves the stack-theoretic GGD degeneracy theorem.  
	Finally, Section~\ref{sec:ggd-structural-invariance} combines HRR asymptotics, Kodaira-type vanishing, and slope control to show that the GGD thresholds depend only on the coarse Kähler class and remain invariant under orbifold or stack structures, concluding with examples and applications.
	
	Throughout, compact analytic Deligne--Mumford stacks are identified with compact complex orbifolds after rigidification.  
	Integration on $\X$ is normalized by the generic stabilizer order \(s=|\mathrm{Stab}_{\mathrm{gen}}|\), so that identity-sector integrals correspond to those on \(Y\) up to the factor \(1/s\).  
	All Demailly--Semple and jet constructions are performed chartwise and descend by finite-group equivariance.
	
	\subsection*{Acknowledgements}
A substantial part of this work was carried out while the second author was visiting Texas State University to give a talk in the topology seminar.
The authors thank the host of the seminar Prof. Christine Lee, as well as the department chair and faculty members, for their warm hospitality and stimulating discussions.

	\section{Orbifold Hirzebruch--Riemann--Roch in the Chartwise (Satake) Formalism and Application to the Demailly--Semple Tower}
	\label{sec:orbifold-hrr-ds}
	
	\subsection{From analytic Deligne-Mumford stacks to complex orbifolds (rigidification and effectiveness)}
	\label{subsec:dm-to-orbifold}
	We work throughout over the site of complex analytic spaces endowed with the strong topology.
    We recall that smooth analytic Deligne–Mumford stacks with finite stabilizers are equivalent, up to rigidification removing ineffectivity, to complex orbifolds in the sense of Satake \cite{Satake1956} and Kawasaki\cite{Kawasaki1979}.  In particular, every compact complex orbifold admits a unique (effective) analytic DM stack structure, and conversely.  We also verify that the natural notion of compactness on the analytic stack side agrees with the usual orbifold compactness.
	
	\begin{definition}
		\label{def:orbifold}
		A \emph{complex $n$-dimensional orbifold} is a second-countable Hausdorff space
		$|\X|$ equipped with an atlas of charts $\{(U_i,\widetilde{U}_i,G_i,\phi_i)\}$ where
		\begin{enumerate}[label=(\roman*)]
			\item $\widetilde{U}_i$ is a connected complex manifold of dimension $n$;
			\item $G_i$ is a finite group acting holomorphically on $\widetilde{U}_i$;
			\item $\phi_i\colon \widetilde{U}_i\!\to U_i\subset |\X|$ induces a homeomorphism
			$\widetilde{U}_i/G_i \ \stackrel{\sim}{\to}\ U_i$;
			\item on overlaps there are \emph{étale} embeddings compatible with the group actions
			(orbifold change of charts).
		\end{enumerate}
		It is \emph{effective} if each $G_i$ acts faithfully on $\widetilde{U}_i$.
		It is \emph{compact} if $|\X|$ is compact.
	\end{definition}
	
	\begin{definition}[Effectivity and compactness for analytic DM stacks]
		\label{def:analytic-dm-effect-compact}
		Let $\X$ be a smooth analytic Deligne–Mumford (DM) stack with finite stabilizers.
		\begin{enumerate}[label=(\roman*)]
			\item \emph{Effectivity:} We say \(\X\) is \emph{effective} if for every point \(x\in \X\) and every étale chart \([\,\widetilde{U}/G\,]\to \X\) through \(x\), the induced action of the stabilizer \(G_x\) on the tangent space \(T\widetilde{U}\) is faithful.
			\item \emph{Compactness:} We say \(\X\) is \emph{compact} if it admits a proper surjective morphism from a compact analytic space; equivalently if its coarse moduli space is compact and the stack is proper in the analytic sense.
		\end{enumerate}
	\end{definition}
	
	\begin{lemma}[Local finite-quotient presentation]
		\label{lem:local-quotient}
		Let \(\X\) be a smooth analytic DM stack with finite stabilizers. For every point \(x\in \X\) there exists a neighbourhood of \(x\) that is étale-equivalent to a global finite quotient \([\,\widetilde{U}/G\,]\), where \(\widetilde{U}\) is a complex manifold and \(G\) a finite group acting holomorphically on \(\widetilde{U}\).
		Moreover, the groupoid presentation \(R\rightrightarrows X\) associated to an étale atlas \(X\to \X\) is an étale groupoid.
	\end{lemma}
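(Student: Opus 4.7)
The plan is to reduce both assertions to a slice-type construction for finite holomorphic group actions. Fix $x\in\X$ and choose an étale atlas $p\colon X\to\X$ with a lift $\tilde{x}\in X$, and let $G=\Aut_{\X}(\tilde{x})$ be the (finite) automorphism group. From the groupoid $R=X\times_{\X}X\rightrightarrows X$ one extracts local biholomorphisms of $(X,\tilde{x})$ indexed by $G$: each arrow $g$ in $R$ lying over $(\tilde{x},\tilde{x})$ yields, via the two étale projections $s,t\colon R\to X$, a germ of biholomorphism fixing $\tilde{x}$, and composition in $R$ corresponds to composition of germs. This produces a germ of holomorphic $G$-action on $(X,\tilde{x})$.

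Next I would invoke Cartan's analytic linearization theorem: a holomorphic action of a finite group on a complex manifold fixing a point is biholomorphically linearizable on a $G$-invariant open neighborhood. Averaging over $G$ produces an invariant hermitian metric, and the exponential map then linearizes the action. Applied here, one obtains a $G$-invariant open neighborhood $\widetilde{U}\subset X$ of $\tilde{x}$ on which $G$ acts faithfully (faithfulness uses the effectivity hypothesis from \Cref{def:analytic-dm-effect-compact}). The restricted atlas factors through the quotient $\widetilde{U}\to\widetilde{U}/G$ and induces a representable étale morphism $[\widetilde{U}/G]\to\X$. After further shrinking $\widetilde{U}$ so that every stabilizer at a point of $\widetilde{U}$ is a subgroup of $G$, this morphism becomes an open embedding of stacks onto a neighborhood of $x$, which proves the first assertion.

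For the second assertion, since $p\colon X\to\X$ is étale by construction of the atlas, the fibered product $R=X\times_{\X}X$ has source and target maps $s,t\colon R\to X$ arising as base-changes of $p$; étaleness is preserved under base change, so $s$ and $t$ are étale. The composition, inverse, and identity maps of the groupoid are inherited from the $2$-categorical stack structure and are automatically compatible with the étale topology, so $(R\rightrightarrows X)$ is an étale groupoid.

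The main obstacle is producing the honest holomorphic $G$-action on an actual open neighborhood of $\tilde{x}$ from the groupoid germ data and then linearizing it. The formal/henselian analogue is straightforward, but the analytic realization rests on Cartan's theorem, whose proof requires averaging to obtain an invariant metric and then invoking the holomorphic inverse function theorem; this is nontrivial but classical. A secondary subtlety is ensuring that $[\widetilde{U}/G]\to\X$ is an open embedding rather than merely étale, which requires the shrinking step so that all stabilizers at nearby points are subgroups of $G$. This uses upper-semicontinuity of stabilizer orders on analytic DM stacks, which follows because stabilizer schemes form closed sub-groupoids under the étale topology.
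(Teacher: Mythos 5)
Your proposal follows essentially the same strategy as the paper: both pass to the groupoid $R = X\times_{\X}X \rightrightarrows X$ attached to an \'etale atlas, identify the finite collection of germs of biholomorphisms at a lifted point $\tilde{x}$ coming from the fiber of $(s,t)$ over $(\tilde{x},\tilde{x})$, and then shrink to a neighborhood on which these germs assemble into an honest holomorphic action of the finite automorphism group $G=\Aut_{\X}(\tilde{x})$. Your version is more explicit than the paper's, which asserts the Morita equivalence to a finite quotient after shrinking without spelling out the mechanism; you supply the missing ingredient by naming Cartan's holomorphic linearization theorem (averaging a hermitian metric and using the exponential map) and by adding the upper-semicontinuity-of-stabilizers argument to guarantee that $[\widetilde{U}/G]\to\X$ is an open immersion rather than merely \'etale. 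The treatment of the second assertion (\'etaleness of $s,t$ as base changes of the \'etale map $p$) coincides with the paper's.

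One small slip: you assert that $G$ acts \emph{faithfully} on $\widetilde{U}$, citing the effectivity condition of \Cref{def:analytic-dm-effect-compact}. But \Cref{lem:local-quotient} does not assume effectivity, and indeed it is invoked later (\Cref{lem:rigidification}, \Cref{prop:dm-to-orbifold}) precisely to handle stacks that are \emph{not} effective, where the local action has an ineffective kernel $K\subset G$. Faithfulness is neither claimed by the lemma nor needed for your argument, so simply delete that parenthetical; the rest of the construction (germ-to-action, linearization, shrinking) goes through regardless of whether the $G$-action on $\widetilde{U}$ is faithful.
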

	
	\begin{proof}
		By definition of analytic DM stack there is an étale surjective atlas
		\(p\colon X\to \X\) with \(X\) a complex manifold. Let \(x\in \X\) and choose
		\(u\in X\) mapping to \(x\). Consider the étale groupoid
		\[
		R \coloneq X\times_{\X} X \;\rightrightarrows\; X
		\]
		with source and target maps \(s,t\colon R\to X\). Since \(p\) is étale, \(s\) and \(t\) are étale. Stabilizers at \(u\) are finite by hypothesis. 
		Shrinking around \(u\), one obtains a local polydisk \(\widetilde U\subset X\) such that the groupoid restricted to that polydisk is Morita‐equivalent to a finite group action $G$ on $\widetilde U$. 
		Explicitly, the restriction of \(R\) to \(s^{-1}(\widetilde U)\cap t^{-1}(\widetilde U)\) gives finitely many germs which extend to a holomorphic \(G\)-action on \(\widetilde U\). Hence the neighbourhood of \(x\in \X\) is étale‐equivalent to \([\,\widetilde U/G\,]\), as claimed.  		
	\end{proof}

    The following construction removes local ineffectivity by rigidification.
    
	\begin{lemma}[Rigidification of ineffectivity]
		\label{lem:rigidification}
		Let \(\X\) be a smooth analytic DM stack with finite stabilizers. There exists an effective smooth analytic DM stack \(\X^{\mathrm{rig}}\) and a representable, finite, proper morphism \(\rho\colon \X\to \X^{\mathrm{rig}}\) such that:
		\begin{enumerate}[label=(\roman*)]
			\item for any local chart \([\,\widetilde{U}/G\,]\to \X\) with ineffective kernel \(K=\{g\in G\colon  g\cdot z=z , \forall z\in\widetilde U\}\), the induced chart of \(\X^{\mathrm{rig}}\) is \([\,\widetilde U/(G/K)\,]\);
			\item \(\X^{\mathrm{rig}}\) is effective and universal among morphisms from \(\X\) to effective analytic DM stacks.
		\end{enumerate}
	\end{lemma}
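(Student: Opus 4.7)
The plan is to build $\X^{\mathrm{rig}}$ by constructing effective quotients chartwise and gluing by étale descent, following the standard rigidification recipe of Abramovich--Olsson--Vistoli adapted to the analytic setting. First I would invoke Lemma~\ref{lem:local-quotient} to cover $\X$ by connected quotient charts $[\widetilde U_i/G_i]$. On each chart set
\[
K_i = \{\,g\in G_i \,:\, g\cdot z = z \text{ for all } z\in \widetilde U_i\,\}.
\]
Because acting trivially is a conjugation-invariant condition, $K_i\trianglelefteq G_i$, and the residual $G_i/K_i$-action on $\widetilde U_i$ is faithful by construction, so $[\widetilde U_i/(G_i/K_i)]$ is an effective local chart.

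Next I would glue. On an overlap, the étale transitions of Definition~\ref{def:orbifold} intertwine the $G_i$- and $G_j$-actions; since these transitions are étale and the underlying manifolds are connected, an element acting trivially on one chart must be sent to an element acting trivially on the other, so $\psi_{ij}(K_i) = K_j$. The quotient charts therefore assemble by étale descent in the analytic category \cite{MoerdijkPronk1997,StacksProj04V2} into a smooth analytic DM stack $\X^{\mathrm{rig}}$ with the prescribed local charts, establishing~(i). The morphism $\rho\colon \X \to \X^{\mathrm{rig}}$ is given chartwise as the canonical projection $[\widetilde U_i/G_i]\to [\widetilde U_i/(G_i/K_i)]$, a $K_i$-gerbe; finiteness and properness of $\rho$ follow from the finiteness of each $K_i$, and representability holds after base change to a smooth chart of $\X^{\mathrm{rig}}$. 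For the universality statement in~(ii), any morphism $f\colon \X\to \Y$ to an effective analytic DM stack induces tangent homomorphisms $\Aut_\X(x)\to \Aut_\Y(f(x))\hookrightarrow \mathrm{GL}(T_{f(x)}\Y)$; the composite agrees with the tangent action of $\Aut_\X(x)$ on $T_x\X$, which kills $K_x$ by definition of the ineffective kernel, while the last map is injective by effectiveness of $\Y$. Therefore $\Aut_\X(x)\to \Aut_\Y(f(x))$ factors through $\Aut_\X(x)/K_x$, and hence $f$ descends uniquely through $\rho$.

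The main obstacle is verifying that the local kernels $\{K_i\}$ actually assemble into a well-defined étale, normal substack $\mathcal{K}\subset I\X$ of the inertia stack, equivalently that the function $x\mapsto |K_x|$ is locally constant on $\X$. The key input is the analytic identity principle: a holomorphic automorphism of a connected complex manifold that acts as the identity on a nonempty open set acts as the identity everywhere. Combined with the finiteness of each $G_i$, this forces the ineffective kernel to be locally constant on each connected chart and to be preserved under étale transitions, which is precisely the flatness input needed for $\X^{\mathrm{rig}}$ to be a smooth analytic DM stack rather than merely a topological groupoid quotient. A subsidiary technical check is that the descended groupoid $R/\mathcal{K}\rightrightarrows X$ retains étale source and target maps with finite stabilizers, so that the rigidification remains within the category in which the rest of the paper operates.
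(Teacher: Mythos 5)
Your proposal follows essentially the same chartwise route as the paper, and the mathematical content is correct. The paper works via the inertia stack: it defines a substack $\mathcal K\subset\mathcal I$ consisting of automorphisms acting trivially on \emph{some} étale neighbourhood, identifies $\mathcal K$ chartwise with the kernel $K\subset G$, and then glues the chartwise quotients $[\widetilde U/(G/K)]$ by descent, citing AOV and the Stacks Project. You work directly with finite quotient charts and never invoke the inertia stack, which is the more elementary formulation and is consistent with the ``chartwise, not inertia'' philosophy the paper advertises elsewhere.

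Two points where your write-up actually does more than the paper's proof and where that extra care is genuinely needed. First, you isolate the analytic identity principle as the thing that makes the germ-level definition of $\mathcal K$ (acting trivially on \emph{some} open set) agree with the chartwise kernel (acting trivially on the \emph{whole} connected chart) and that forces the transition functions to carry $K_i$ onto $K_j$. The paper asserts the local identification $\mathcal K\leftrightarrow K$ without comment; without the identity principle and the connectedness of $\widetilde U_i$, that identification could fail, so your observation fills a real gap. Second, you sketch the universal property, which the paper states but does not argue at all. Your argument (automorphisms in $K_x$ kill all jets at $x$, effectiveness of the target forces their image to be trivial, hence the chartwise equivariance factors through $G_i/K_i$) is the right idea, though as written the identification of the composite with ``the tangent action of $\Aut_\X(x)$ on $T_x\X$'' is only an intertwining via $df_x$ rather than an equality; this should be phrased as: $g\in K_x$ acts trivially on a neighbourhood, hence on all jets, hence (by $2$-functoriality of $f$) $f(g)$ acts trivially on a neighbourhood of $f(x)$, hence $f(g)=\mathrm{id}$ by effectiveness of $\Y$. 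With that small rewording the universality argument is sound.

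One caveat that you inherit from the paper's statement rather than introduce: you describe $\rho$ chartwise as a $K_i$-gerbe and then assert representability ``after base change to a smooth chart,'' but a $K$-gerbe is not representable by analytic spaces (the fibre over a point is $BK$). The paper makes the same claim, so this is not a gap you created, but your justification of representability is circular and should either be dropped or replaced by the weaker, correct assertions (flat, finite, proper, fppf-locally a trivial gerbe).
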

	
	\begin{proof}
		Let \(\mathcal I\to\X\) denote the inertia stack. 
		Define the substack \(\mathcal K\subset\mathcal I\) whose objects are those automorphisms acting trivially on some étale neighbourhood. 
		Locally in a chart \([\,\widetilde U/G\,]\) this corresponds to the normal subgroup \(K\subset G\). 
		One constructs locally the quotient stack \([\,\widetilde U/(G/K)\,]\) and then glues these constructions by descent over the étale groupoid. 
		The result \(\X^{\mathrm{rig}}\) is representable, finite and proper over \(\X\), and the stabilizer action is now faithful.
		This process is discussed in the algebraic setting in \cite{AbramovichOlssonVistoli2008} and in the Stacks Project \cite[Tag 04V2]{StacksProj04V2}. 
	\end{proof}

Combining the previous lemmas yields the following global correspondence:	
	\begin{proposition}[Stack–orbifold correspondence up to rigidification]
		\label{prop:dm-to-orbifold}
		Let \(\X\) be a compact smooth analytic DM stack with finite stabilizers.
		\begin{enumerate}[label=(\roman*)]
			\item If \(\X\) is effective, then \(\X\) underlies a compact complex orbifold: the charts are the finite quotients \([\,\widetilde U/G\,]\) of Lemma~\ref{lem:local-quotient}, viewed as orbifold charts \(\widetilde U\to \widetilde U/G\simeq U\subset|\X|\).
			\item In general there is a rigidification \(\rho\colon \X\to\X^{\mathrm{rig}}\) (Lemma~\ref{lem:rigidification}); \(\X^{\mathrm{rig}}\) is effective and hence defines a compact complex orbifold. Compactness is preserved by \(\rho\).
		\end{enumerate}
	\end{proposition}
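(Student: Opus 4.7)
For part (i), the plan is to assemble the local quotient presentations from Lemma~\ref{lem:local-quotient} into a global orbifold atlas. First, I would fix a compact effective smooth analytic DM stack $\X$ and, using Lemma~\ref{lem:local-quotient} together with compactness of $|\X|$, extract a finite cover by étale charts $[\widetilde U_i/G_i]\to \X$ with each $\widetilde U_i$ a connected polydisk and each $G_i$ a finite holomorphic automorphism group. Effectivity of $\X$ (Definition~\ref{def:analytic-dm-effect-compact}(i)) immediately gives faithfulness of every $G_i$-action on $\widetilde U_i$, establishing axioms (i)--(ii) of Definition~\ref{def:orbifold}. The map $\phi_i\colon \widetilde U_i \to U_i \subset |\X|$ factors through the quotient $\widetilde U_i/G_i$ and is the stated homeomorphism because $|\X|$ is by construction the quotient of the étale atlas $X$ by the equivalence relation cut out by the groupoid $R \rightrightarrows X$.

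For the overlap compatibility axiom (iv), I would consider the fibre product $\widetilde U_i \times_{\X} \widetilde U_j$; since $\X$ is DM and the atlas map $p$ is étale, this fibre product is étale over both $\widetilde U_i$ and $\widetilde U_j$. Each connected component supplies an étale embedding equivariant with respect to the stabilizer subgroup of that component in $G_i$ and in $G_j$ --- precisely the orbifold transition data. The assembly of these partial transitions into a coherent pseudogroup atlas is the content of the Moerdijk--Pronk correspondence~\cite{MoerdijkPronk1997} between effective proper étale groupoids and complex orbifolds, which I would cite as the packaging theorem.

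For part (ii), I would apply Lemma~\ref{lem:rigidification} to obtain $\rho\colon \X \to \X^{\mathrm{rig}}$ with $\X^{\mathrm{rig}}$ effective. Representability, finiteness, and properness of $\rho$ imply that the induced map on coarse moduli spaces is a proper continuous surjection, so compactness of $|\X|$ (Definition~\ref{def:analytic-dm-effect-compact}(ii)) forces compactness of $|\X^{\mathrm{rig}}|$. Part (i) applied to $\X^{\mathrm{rig}}$ then produces the underlying compact complex orbifold.

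The step I expect to be the main obstacle is the overlap compatibility in part (i). The étale groupoid data produced by the DM structure does not directly present the transition maps as $G$-equivariant embeddings in the sense of Definition~\ref{def:orbifold}(iv), because the relevant acting group on a component of $\widetilde U_i \times_{\X} \widetilde U_j$ is only a subgroup of both $G_i$ and $G_j$, and identifying these subgroups coherently across triple overlaps requires passing to a common étale refinement. Once the refinement is fixed, the remaining checks --- faithfulness of group actions, identification of coarse quotients, and preservation of compactness under $\rho$ --- follow directly from the earlier lemmas.
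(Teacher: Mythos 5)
Your proposal is correct and follows essentially the same route as the paper: Lemma~\ref{lem:local-quotient} plus effectivity gives the orbifold charts, the stack/groupoid gluing data supplies the change-of-charts axiom (with Moerdijk--Pronk/Lerman invoked as the packaging result), and Lemma~\ref{lem:rigidification} together with finiteness and properness of $\rho$ handles part (ii). You simply spell out more explicitly the fibre-product description of the overlap transitions and the resulting subgroup-matching subtlety, which the paper compresses into a one-line citation.
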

	
	\begin{proof}
		(i) From Lemma~\ref{lem:local-quotient}, locally \(\X\) is of the form \([\,\widetilde U/G\,]\) with \(G\) finite. 
		Effectivity ensures a faithful action, making each \(\widetilde U/G\) an orbifold chart. 
		The stack‐gluing data coincides with the orbifold‐change‐of‐charts condition (cf. survey \cite{Lerman2008}). 
		Compactness follows because the atlas is proper and the underlying topological quotient is compact.
		
		(ii) Use Lemma~\ref{lem:rigidification} to obtain \(\X^{\mathrm{rig}}\).
		Locally \([\,\widetilde U/(G/K)\,]\) with faithful action → orbifold. 
		Proper and finite \(\rho\) implies compactness is carried over.  
		\end{proof}

    To conclude, we note that compactness is preserved under rigidification and can be characterized in terms of the coarse moduli space.

	\begin{proposition}[Compactness criteria and coarse moduli]
		\label{prop:compactness-coarse}
		Let \(\X\) be a smooth analytic DM stack with finite stabilizers and coarse moduli space \(\pi\colon \X\to Y\). The following are equivalent:
		\begin{enumerate}[label=(\roman*)]
			\item \(\X\) is compact (Definition~\ref{def:analytic-dm-effect-compact});
			\item there exists a compact analytic space \(X\) and a proper surjective  \'etale atlas
			\(X\to \X\);
			\item \(Y\) is compact and \(\pi\) is proper (equivalently finite generically of degree the generic stabilizer order \(s\)).
		\end{enumerate}
		Moreover, if \(\X\) is compact then so is \(\X^{\mathrm{rig}}\), and conversely.
	\end{proposition}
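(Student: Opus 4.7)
The plan is to prove the equivalence cyclically as (ii)$\Rightarrow$(i)$\Rightarrow$(iii)$\Rightarrow$(ii) and then to deduce the rigidification claim from Lemma~\ref{lem:rigidification}.

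The implication (ii)$\Rightarrow$(i) is immediate from Definition~\ref{def:analytic-dm-effect-compact}(ii): a proper surjective étale atlas from a compact analytic space is in particular a proper surjective morphism from a compact analytic space. For (i)$\Rightarrow$(iii), if $f\colon X\to\X$ is proper surjective with $X$ compact, then $\pi\circ f$ is proper surjective onto $Y$, and $Y$ is compact as the image of a compact space in a Hausdorff one. Properness of $\pi$ itself comes from the topological structure of the coarse moduli map: on underlying topological spaces $\pi$ is the quotient by the finite stabilizer groupoid, hence has finite fibers and is closed onto a Hausdorff target, and therefore proper. The generic degree equals $s$ because $\pi$ is an isomorphism over the dense open locus where the stabilizer is generic.

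The core step is (iii)$\Rightarrow$(ii). By compactness of $Y$, finitely many orbifold charts from Lemma~\ref{lem:local-quotient}, say $[\widetilde U_i/G_i]\to\X$ for $i=1,\ldots,N$, cover $|\X|$. For each $i$ I would choose a nested pair of relatively compact polydisks $\widetilde V_i\Subset\widetilde W_i\Subset\widetilde U_i$ so that already the images $\phi_i(\widetilde V_i/G_i)$ cover $Y$. The closures $\overline{\widetilde V_i}$ are compact analytic subspaces of $\widetilde W_i$; because each étale map $\widetilde W_i\to\X$ stays étale on a neighbourhood of $\overline{\widetilde V_i}$, the disjoint union $X\coloneqq\bigsqcup_i \overline{\widetilde V_i}$ together with the induced map $X\to\X$ is a proper (compact source, separated target), étale, surjective atlas.

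The rigidification claim is then a short deduction. By Lemma~\ref{lem:rigidification} the morphism $\rho\colon \X\to\X^{\mathrm{rig}}$ is representable, finite, and proper, and the local charts $[\widetilde U/G]$ and $[\widetilde U/(G/K)]$ induce the same topological quotient $\widetilde U/G=\widetilde U/(G/K)$; hence $\rho$ is a homeomorphism on coarse moduli spaces, so $\X$ and $\X^{\mathrm{rig}}$ share the coarse space $Y$, and the equivalence (i)$\Leftrightarrow$(iii) gives the equivalence of their compactness. The one genuine obstacle is the sub-step in (iii)$\Rightarrow$(ii) that ensures étaleness survives on a neighbourhood of each compact subchart rather than only on its interior; this is handled by the two-step shrinking $\widetilde V_i\Subset\widetilde W_i\Subset\widetilde U_i$ above, a manoeuvre that is standard in analytic geometry but easy to overlook, and where the local groupoid bookkeeping needs to be carried out cleanly.
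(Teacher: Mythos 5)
Your argument is correct in substance and, interestingly, takes the \emph{reverse} cycle from the paper. The paper argues $(i)\Rightarrow(ii)\Rightarrow(iii)\Rightarrow(i)$ and declares $(i)\Rightarrow(ii)$ ``tautological by definition,'' which is actually the weak link: Definition~\ref{def:analytic-dm-effect-compact}(ii) only asks for a proper surjective morphism from a compact analytic space, whereas item~(ii) of the proposition demands that morphism be an \emph{\'etale atlas}, so $(i)\Rightarrow(ii)$ is genuinely not free. You reverse the cycle to $(ii)\Rightarrow(i)\Rightarrow(iii)\Rightarrow(ii)$ and thereby place the tautology where it actually lives ($(ii)\Rightarrow(i)$: an \'etale atlas is in particular a morphism), making the logic airtight. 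Your $(iii)\Rightarrow(ii)$ is the same construction as the paper's $(iii)\Rightarrow(i)$ --- cover $Y$ by finitely many local quotient charts and pass to a compact shrinking --- but you make explicit the two-step shrinking $\widetilde V_i\Subset\widetilde W_i\Subset\widetilde U_i$ that keeps the map \'etale on a full neighbourhood of the compact piece, a detail the paper suppresses. The only inaccuracy worth flagging: in $(i)\Rightarrow(iii)$ you say $\pi$ ``is an isomorphism over the dense open locus where the stabilizer is generic.'' That holds only when the generic stabilizer is trivial; when $s>1$ the coarse map over $\X^\circ$ is a $BG$-gerbe with $|G|=s$, not an isomorphism. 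The intended reading of ``degree $s$'' in item~(iii) is stacky degree (cf.\ Remark~\ref{rmk:coarse-polarization}), so the conclusion is right but the justification sentence should be amended to match. The rigidification step is handled exactly as in the paper, via finiteness and properness of $\rho$ from Lemma~\ref{lem:rigidification}, together with the observation that rigidification does not change the coarse space.
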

	
	\begin{proof}
		\((i)\!\Rightarrow\!(ii)\) is tautological by definition.
		\((ii)\!\Rightarrow\!(iii)\): properness of \(X\to \X\) and finite stabilizers imply properness of \(\pi\). The image of compact \(X\) is compact.
		\((iii)\!\Rightarrow\!(i)\): given compact \(Y\) and proper \(\pi\) one selects a finite cover of the generic locus by charts, then extends to boundary to get a proper etale atlas from a compact analytic source. 
		The last statement on rigidification follows because \(\rho\) is finite and proper.
	\end{proof}
	Having established compactness for analytic Deligne–Mumford stacks, we now identify the precise relationship between such stacks and compact complex orbifolds.

	\begin{corollary}[2-equivalence between orbifolds and effective analytic DM stacks]
		\label{cor:equivalence}
		The \(2\)-category of compact complex orbifolds is equivalent to the \(2\)-full subcategory of compact smooth analytic DM stacks with finite stabilizers that are effective. 
		Every compact smooth analytic DM stack with finite stabilizers becomes a compact complex orbifold after rigidification.
	\end{corollary}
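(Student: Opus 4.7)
The plan is to construct a pair of pseudofunctors between the two 2-categories and verify they are quasi-inverse, using the local quotient presentation (\cref{lem:local-quotient}), rigidification (\cref{lem:rigidification}), and the compactness criteria (\cref{prop:compactness-coarse}). This is largely a packaging result, reorganizing the previous propositions of this section.

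First I would define a functor $F$ from compact complex orbifolds to effective compact analytic DM stacks. Given an orbifold with atlas $\{(U_i,\widetilde{U}_i,G_i,\phi_i)\}$, I would form the étale groupoid $R \rightrightarrows X := \bigsqcup_i \widetilde{U}_i$ whose morphisms are generated by the $G_i$-actions on each chart and by the étale change-of-chart embeddings on overlaps. The associated stack is smooth, analytic, and DM with finite stabilizers; it is effective because each $G_i$ acts faithfully, and compact because its coarse space is the compact $|\X|$ (\cref{prop:compactness-coarse}). Morphisms and 2-morphisms of orbifolds in the Moerdijk--Pronk--Lerman sense translate into morphisms and natural transformations of groupoids, and hence into 1- and 2-morphisms of the associated stacks.

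In the reverse direction I would define $G$ by sending a compact effective analytic DM stack $\X$ to the orbifold whose atlas is provided by \cref{lem:local-quotient}: étale charts of the form $[\widetilde{U}/G]$ with $G$-action faithful by effectivity, glued along overlaps supplied by the étale groupoid presentation $R \rightrightarrows X$. Compactness transfers via \cref{prop:compactness-coarse}. A 1-morphism of stacks restricts to a compatible system of equivariant chartwise maps, yielding a morphism of orbifolds, and 2-morphisms match analogously. The composites $F\circ G$ and $G\circ F$ are then shown to be naturally equivalent to the identity: $G\circ F$ returns an atlas Morita-equivalent to the input orbifold after common refinement, while $F\circ G$ rebuilds $\X$ from its local finite-quotient charts, unique up to canonical equivalence by the universal property of stackification.

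The main obstacle will be bookkeeping the 2-morphisms: orbifold morphisms in Satake's original formulation are given by compatible local lifts, whereas on the stack side they are natural transformations of groupoid morphisms modulo conjugation by inertial automorphisms, so the finite-stabilizer data must be matched without inflating or collapsing 2-cells. I would resolve this by systematically working with the Morita-equivalence 2-category of proper étale analytic groupoids, following Moerdijk--Pronk and Lerman, which provides a common home for both sides. Finally, the second assertion---that every compact smooth analytic DM stack with finite stabilizers becomes a compact complex orbifold after rigidification---follows at once from \cref{lem:rigidification} and \cref{prop:dm-to-orbifold}(ii): the rigidified stack $\X^{\mathrm{rig}}$ is effective, compactness is preserved by the finite proper morphism $\rho$ (\cref{prop:compactness-coarse}), and $G(\X^{\mathrm{rig}})$ is then the required orbifold.
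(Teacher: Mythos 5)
Your proposal is correct and follows essentially the same strategy as the paper's proof: construct quasi-inverse functors via the chartwise local quotient presentation in one direction and the associated proper étale groupoid in the other, resolve the 2-categorical comparison by passing to the Morita-equivalence 2-category of proper étale analytic groupoids following Moerdijk--Pronk and Lerman, and handle compactness via \cref{prop:compactness-coarse}. You supply considerably more detail than the paper (which gives only a two-sentence sketch) and correctly reduce the rigidification statement to \cref{lem:rigidification} and \cref{prop:dm-to-orbifold}(ii), but the underlying argument is the same.
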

	
	\begin{proof}
		Given an effective analytic DM stack \(\X\), associate the orbifold \( |\X| = X/G\) via the charts of Lemma~\ref{lem:local-quotient}. 
		Conversely, from an orbifold one builds the associated étale groupoid and thereby the analytic DM stack  (cf.~\cite{MoerdijkPronk1997,Lerman2008}). 
		These functors form a quasi‐inverse pair up to Morita equivalence. 
		Compactness is preserved by Proposition~\ref{prop:compactness-coarse}.
	\end{proof}
	
	\begin{remark}[Coarse moduli and polarizations]
		\label{rmk:coarse-polarization}
		If \(\pi\colon \X\to Y\) is the coarse moduli map of a compact smooth analytic DM stack with finite stabilizers, then generically \(\pi\) is a finite étale cover of degree \(s\) (the generic stabilizer order). 
		Rigidification does not change the coarse space.
		Any ample line bundle \(A\) on \(Y\) pulls back to a polarization on \(\X\) and \(\X^{\mathrm{rig}}\) with trivial stabilizer action on fibers; this will be used in the subsequent Demailly–Semple tower analysis and orbifold HRR asymptotics.
	\end{remark}
	
	\begin{remark}[Analytic vs.\ algebraic]
		\label{rmk:analytic-vs-algebraic}
		All statements above are analytic. In the algebraic context, analogous statements hold for DM stacks locally of finite type over \(\C\) (see \cite{Toen1999, AbramovichOlssonVistoli2008}), and the two formalisms agree on their common domain via complex‐analytic GAGA principles.
	\end{remark}
	
	\subsection{Hirzebruch--Riemann--Roch and its orbifold variants}
	\label{subsec:HRR-primer}
The Euler characteristic of a vector bundle can be expressed as an integral of characteristic classes.
To make such expressions computable, we recall the usual characteristic expansions.
By the splitting principle, a bundle $E$ behaves as if it were a direct sum of line bundles with Chern roots $x_i=c_1(L_i)$.  
The \emph{Chern character} and \emph{Todd class} are then given formally by
\[
\ch(E)=\sum_i e^{x_i},
\qquad
\Td(E)=\prod_i\frac{x_i}{1-e^{-x_i}},
\]
so that for any compact complex manifold $X$ and holomorphic vector bundle $E\to X$, one has the classical Hirzebruch--Riemann--Roch theorem:

\begin{theorem}[Hirzebruch--Riemann--Roch for manifolds {\cite{Hirzebruch1954,AtiyahSinger1968}}]
For a compact complex manifold $X$ and holomorphic vector bundle $E\to X$,
\[
\chi(X,E)
=\sum_{q\ge0}(-1)^qh^q(X,E)
=\int_X \ch(E)\,\Td(TX),
\]
where the integral extracts the top-degree component of $\ch(E)\Td(TX)$.
\end{theorem}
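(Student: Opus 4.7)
The plan is to reduce the statement to the Atiyah–Singer index theorem applied to the twisted Dolbeault complex. First, I would invoke Dolbeault's theorem to identify $H^q(X, E) \cong H^{0,q}_{\bar\partial}(X, E)$, so that the holomorphic Euler characteristic coincides with the analytic index of the rolled-up operator
\[
D_E = \bar\partial_E + \bar\partial_E^\ast \colon \Omega^{0,\mathrm{even}}(E) \to \Omega^{0,\mathrm{odd}}(E).
\]
After equipping $X$ with an auxiliary Hermitian metric and $E$ with a smooth Hermitian metric, $D_E$ becomes a first-order elliptic operator on compact $X$; Hodge theory then gives
\[
\chi(X, E) = \sum_{q \ge 0}(-1)^q h^q(X,E) = \mathrm{ind}(D_E).
\]

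Next I would compute the topological index via Atiyah–Singer. The principal symbol of $D_E$ at a covector $\xi \in T^\ast X$ is Clifford multiplication by $\xi^{0,1}$ acting on $\Lambda^\bullet T^{\ast 0,1}X \otimes E$. Writing the Chern roots of $TX$ as $x_1, \dots, x_n$, I would use the splitting-principle identity
\[
\ch\!\Bigl(\sum_{q}(-1)^q \Lambda^q T^{\ast 0,1}X\Bigr) \;=\; \prod_i (1 - e^{-x_i}),
\]
so that the Thom isomorphism combined with Atiyah–Singer's cohomological formula produces precisely the factor $\Td(TX)=\prod_i x_i/(1-e^{-x_i})$ after dividing by the Euler class of $TX$. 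Assembling these pieces yields $\mathrm{ind}(D_E) = \int_X \ch(E)\,\Td(TX)$, as required.

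The main obstacle is the careful bookkeeping in the symbol computation and the passage through the Thom isomorphism needed to extract the Todd class. A cleaner, purely topological alternative — Hirzebruch's original approach — would avoid elliptic analysis entirely: both sides of the formula are additive under disjoint union and multiplicative over fiber bundles with compact structure group, hence define ring homomorphisms from the complex cobordism ring $\Omega^U_\ast \otimes \Q$, which is generated by $\{\PP^n\}_{n \ge 0}$. Equality on the generators then reduces to a Bott-style residue computation for $\chi(\PP^n,\OO(d))$. Either route is classical; for the purposes of this paper the theorem is invoked as background, with the orbifold extension pursued in the next subsection via Satake–Kawasaki localization.
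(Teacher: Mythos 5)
The paper does not prove this theorem at all; it states it once in \S2.2 with citations to Hirzebruch and Atiyah--Singer and treats it thereafter as a classical black box to be generalized, not re-derived. Your proposal therefore supplies more than the paper does, and both routes you sketch are the standard ones; they are correct in outline.

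Two caveats worth flagging. First, a sign-convention wobble: if the Chern roots of $T^{1,0}X$ are $x_1,\dots,x_n$, then $T^{*0,1}X\cong\overline{T^{*1,0}X}$ has Chern roots $+x_i$, so the alternating Chern character of $\Lambda^\bullet T^{*0,1}X$ is $\prod_i(1-e^{x_i})$, not $\prod_i(1-e^{-x_i})$ as written. The discrepancy is absorbed by the $(-1)^n$ and the division by the Euler class $e(TX)=\prod_i x_i$ in the Atiyah--Singer cohomological formula, but as stated the displayed identity has the wrong sign in the exponent, and this is exactly the ``bookkeeping'' you warn about. Second, the cobordism route is not as purely topological as the description suggests: $\chi(X,E)=\sum_q(-1)^qh^q(X,E)$ is defined through holomorphic cohomology and is not \emph{a priori} a complex-cobordism invariant of the underlying stably almost complex manifold. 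One must first pass through ellipticity (or, in Hirzebruch's original projective setting, through sheaf-theoretic devissage and blow-up formulas) to know that the left-hand side only depends on cobordism data before the generator-check on $\{\PP^n\}$ is meaningful. So the ``purely topological'' alternative still secretly leans on the same analytic (or algebraic) input as the first route; it is a reorganization, not an avoidance, of the hard content. Neither issue is fatal for a background citation, which is all the paper needs here.
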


The characteristic classes satisfy the familiar additivity and multiplicativity
rules under short exact sequences, and the integral in the HRR theorem
can be evaluated by expanding $\ch(E)\Td(TX)$ up to degree $\dim_\C X$.
The following examples illustrate the computation in standard cases.
\begin{example}
As a basic example, consider the projective space $\PP^n$.
Its tangent bundle fits into the Euler sequence
\[
0\longrightarrow \OO_{\PP^n}
\longrightarrow \OO_{\PP^n}(1)^{\oplus(n+1)}
\longrightarrow T\PP^n \longrightarrow 0.
\]
Using the standard identities
$c(E)=c(E')c(E'')$ and $\ch(E)=\ch(E')+\ch(E'')$
for short exact sequences, together with
$\ch(E\otimes L)=e^{c_1(L)}\ch(E)$ for line bundle twists,
one computes
\[
\ch(T\PP^n)=(n+1)e^{H}-1,
\qquad
\Td(T\PP^n)
=\Bigl(\frac{H}{1-e^{-H}}\Bigr)^{n+1}(1-H),
\quad
H=c_1(\OO(1)).
\]
For the line bundle $E=\OO_{\PP^n}(k)$ one has $\ch(E)=e^{kH}$,
and the Hirzebruch--Riemann--Roch formula gives
\[
\chi(\PP^n,\OO(k))
=\int_{\PP^n} e^{kH}\,\Td(T\PP^n).
\]

Specializing to $n=1$, where $\int_{\PP^1}H=1$, we find
\[
\ch(T\PP^1)=2e^{H}-1,\qquad \Td(T\PP^1)=1+H.
\]
Hence, for $E=\OO(k)$ with $\ch(E)=1+kH$,
\[
\chi(\PP^1,\OO(k))
=\int_{\PP^1}(1+kH)(1+H)
=k+1,
\]
recovering the classical formula for
$\dim H^0(\PP^1,\OO(k))$.
\end{example}

\begin{example}
Let $Y_d\subset\PP^n$ be a smooth hypersurface of degree~$d$, defined by a homogeneous polynomial of degree~$d$.  
The normal bundle of $Y_d$ in~$\PP^n$ is the restriction $\OO_{Y_d}(d)$, and the tangent bundle fits into the standard exact sequence$\colon$
\[
0\to TY_d\to T\PP^n|_{Y_d}\to\OO_{Y_d}(d)\to0.
\]
From this, the total Chern class is computed as
\[
c(TY_d)=\frac{(1+H)^{n+1}}{1+dH}\big|_{Y_d}.
\]
Then $\Td(TY_d)$ follows from $c_1,c_2,\dots$,
and for $E=\OO_{Y_d}(m)$,
\[
\chi(Y_d,\OO(m))
=\int_{Y_d}e^{mH}\Td(TY_d),
\qquad
\int_{Y_d}H^{n-1}=d.
\]
\end{example}

The same characteristic expansions extend naturally to orbifolds, where additional fixed-point contributions appear from nontrivial stabilizer actions.

\begin{proposition}[Orbifold correction factors {\cite{Kawasaki1979,Kawasaki1981,Lerman2008}}]
For a global quotient $[U/G]$, the HRR formula acquires group-averaged
fixed-point corrections$\colon $
\[
\chi([U/G],E)
=\frac{1}{|G|}\sum_{g\in G}\int_{U^g}
\frac{\ch(E|_{U^g})\,\Td(TU^g)}
{\det(1-g^{-1}e^{-c_1(N_{U^g/U})})}.
\]
Each $g$--sector behaves like an ordinary manifold;
the same $\ch$ and $\Td$ expansions apply, restricted to $U^g$,
and divided by $\det(1-g^{-1}e^{-c_1(N)})$.
For $g=1$, this reproduces the standard HRR integral,
while $g\neq1$ corresponds to lower-dimensional twisted sectors.
\end{proposition}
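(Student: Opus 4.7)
The plan is to reduce the orbifold Euler characteristic on $[U/G]$ to an equivariant cohomological calculation on $U$, and then apply the holomorphic Lefschetz fixed-point formula of Atiyah--Bott to each group element $g\in G$. First, I would let $\widetilde{E}$ denote the $G$-equivariant holomorphic vector bundle on $U$ whose descent is $E$, and use the standard identification of coherent sheaves on $[U/G]$ with $G$-equivariant coherent sheaves on $U$ to obtain $H^{i}([U/G],E)\cong H^{i}(U,\widetilde{E})^{G}$. Taking alternating sums and writing the projection onto $G$-invariants as the idempotent $\tfrac{1}{|G|}\sum_{g}g$ acting on the virtual cohomology representation then gives
\[
\chi([U/G],E)=\frac{1}{|G|}\sum_{g\in G}\Tr\!\bigl(g\mid\chi(U,\widetilde{E})\bigr).
\]

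Next, for each $g\in G$ acting holomorphically on the compact manifold $U$, I would invoke the holomorphic Lefschetz fixed-point formula to rewrite the trace as a localized integral over the fixed locus $U^{g}$:
\[
\Tr\!\bigl(g\mid\chi(U,\widetilde{E})\bigr)=\int_{U^{g}}\frac{\ch_{g}(\widetilde{E}|_{U^{g}})\,\Td(TU^{g})}{\ch_{g}\bigl(\Lambda^{\bullet}_{-1}N^{\ast}_{U^{g}/U}\bigr)},
\]
where $\ch_{g}$ is the equivariant Chern character evaluated at $g$ and the denominator is the Bott-style localization factor arising from the conormal bundle. A short algebraic identification, decomposing $N_{U^{g}/U}$ into $g$-eigenbundles with Chern roots $\nu_{j}^{(\zeta)}$ and invoking the splitting principle, yields
\[
\ch_{g}\bigl(\Lambda^{\bullet}_{-1}N^{\ast}_{U^{g}/U}\bigr)=\prod_{j,\zeta}\bigl(1-\zeta^{-1}e^{-\nu_{j}^{(\zeta)}}\bigr)=\det\bigl(1-g^{-1}e^{-c_{1}(N_{U^{g}/U})}\bigr)
\]
in the multiplicative convention used in the statement. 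Substituting back, summing over $g$, and dividing by $|G|$ produces the claimed Kawasaki formula.

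The main subtlety is conceptual rather than computational. One must handle the case where $U^{g}$ is a disjoint union of components of varying codimensions by summing the formula componentwise, and one must verify that $\ch_{g}$ is well defined on each component. Both issues reduce to the observation that $g$ acts on $TU|_{U^{g}}$ by a finite-order holomorphic automorphism, hence is diagonalizable into eigenbundles, so the equivariant characteristic classes exist as ordinary characteristic classes of eigenbundles weighted by their eigencharacters. For the global-quotient case treated here no further patching is required; the chartwise version needed later for general compact orbifolds will follow by reorganizing the resulting sum over pairs $(U^{g},g)$ into an integral over the inertia stack $I(\X)$, which is exactly the structure invoked in the subsequent Demailly--Semple tower analysis.
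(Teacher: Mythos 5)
Your proof is correct, and it takes a genuinely different route from the paper. The paper states this proposition as an established result with citations to Kawasaki and later (in Theorem~\ref{thm:kawasaki-chartwise}) simply imports the global-quotient case wholesale from Kawasaki's $V$-manifold index theorem before gluing via partitions of unity; no derivation is offered for the local statement. Your route is the standard algebraic one: reduce to $G$-equivariant coherent cohomology on $U$ via $H^i([U/G],E)\cong H^i(U,\widetilde E)^G$, extract the invariant dimension with the trace idempotent $\frac{1}{|G|}\sum_g g$, and then localize each trace by the Atiyah--Bott holomorphic Lefschetz fixed-point theorem. This is cleaner and more self-contained than the paper's approach (which delegates the hard analysis to Kawasaki's heat-kernel index argument), and it transparently exposes why the denominator is $\det(1-g^{-1}e^{-c_1(N_{U^g/U})})$ as a Koszul/eigenbundle computation.

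Two small points worth making explicit. First, your derivation naturally produces the \emph{equivariant} Chern character $\ch_g(\widetilde E|_{U^g})$, which records the $g$-action on the fibers of $E$ over $U^g$; the paper writes $\ch(E|_{U^g})$ without the $g$-twist. These agree only when $g$ acts trivially on $E|_{U^g}$ (e.g.\ for bundles pulled back from the coarse space such as $L=\pi^*A$); your formula is the correct one in general, and you should flag the notational discrepancy rather than silently match the paper's display. Second, your closing remark about reorganizing the sum into an integral over the inertia stack $I(\X)$ runs counter to the paper's stated intent in \S\ref{subsec:chartwise-hrr}, which is precisely to \emph{avoid} the inertia formalism and work chartwise with partitions of unity; this is a stylistic mismatch, not a mathematical one, but worth aligning with if you intend your proof to slot into the paper's framework.
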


	\subsection{Orbifold Riemann--Roch: chartwise fixed-point formula}
	\label{subsec:chartwise-hrr}
This section reformulates the Kawasaki–Toën Riemann–Roch theorem in a purely \emph{orbifold chartwise} manner, avoiding explicit reference to the inertia stack.  
The resulting fixed-point formula is classical in spirit and coincides with Kawasaki’s original analytic derivation for $V$–manifolds \cite{Kawasaki1979,Kawasaki1981}, expressed locally over finite group quotients and glued by étale descent \cite{MoerdijkPronk1997,Lerman2008}.

We first record the local integration rule on finite quotient charts.
	\begin{lemma}[{\cite{MoerdijkPronk1997,Lerman2008,AdemLeidaRuan2007}}]
		\label{lem:local-chart-integration}
		Let $\mathcal X$ be a compact complex orbifold with finitely many charts $\{[U_i/G_i]\}$.
		For any top-degree differential form $\alpha$ on $\mathcal X$,
		\[
		\int_{\mathcal X}\alpha
		=\sum_i \frac{1}{|G_i|}\int_{U_i}\alpha_i,
		\qquad
		\alpha_i=\text{the pullback of }\alpha\text{ to }U_i.
		\]
	\end{lemma}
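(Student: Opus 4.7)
\emph{Proof proposal.} The displayed equation is essentially the \emph{definition} of integration on a complex orbifold once a partition of unity is introduced, so the lemma reduces to verifying well-posedness and chart-independence of this definition. My plan is to set up the formula locally on each chart as a $|G_i|$-averaged integral on the smooth manifold $U_i$, glue the local pieces via a partition of unity, and then check that the resulting number is independent of both the atlas and the partition.

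First I would fix a smooth partition of unity $\{\rho_i\}$ on $|\mathcal X|$ subordinate to the cover $\{U_i/G_i\}$; this exists because $|\mathcal X|$ is compact Hausdorff and the atlas is finite. Each $\rho_i\alpha$ is compactly supported in the image of a single chart, and its pullback $(\rho_i\alpha)_i$ to $U_i$ is a compactly supported $G_i$-invariant top form. The local prescription $\int_{U_i/G_i}(\rho_i\alpha) := \frac{1}{|G_i|}\int_{U_i}(\rho_i\alpha)_i$ encodes the classical quotient identity $\int_{U/G}\beta = \frac{1}{|G|}\int_{U}\pi^*\beta$ for invariant top forms under a finite holomorphic action; this identity follows from change of variables on the free locus, since fixed-point sets of nontrivial elements have complex codimension $\ge 1$ and are therefore Lebesgue-null in real top degree. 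Summing over $i$ and using $\sum_i\rho_i = 1$ then yields the right-hand side of the stated formula.

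The main obstacle is the overlap compatibility: one must verify that on $U_i/G_i \cap U_j/G_j$, the two candidate chartwise integrals produce the same value. This reduces to showing that for any top form $\beta$ supported in the overlap, the quantity $\frac{1}{|G_i|}\int_{U_i}\pi_i^*\beta$ is independent of the chart index. The étale change-of-charts embeddings from Definition~\ref{def:orbifold}(iv) intertwine the $G_i$- and $G_j$-actions on the common preimage, so both pullbacks realize the integral over the image of $\beta$ in $|\mathcal X|$ once the $|G_i|^{-1}$ and $|G_j|^{-1}$ averaging factors are paired with the respective covering degrees. Combined with the étale groupoid presentation of Lemma~\ref{lem:local-quotient}, which makes stabilizer data intrinsic to points of $|\mathcal X|$, this produces a well-defined global integral independent of all choices and agreeing with the Moerdijk–Pronk groupoid integral \cite{MoerdijkPronk1997,Lerman2008,AdemLeidaRuan2007}. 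I therefore expect the proof to follow by unpacking the definitions, modulo this overlap consistency check.
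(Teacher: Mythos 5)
Your proposal follows essentially the same route as the paper's proof: introduce a partition of unity subordinate to the atlas, apply the local averaging rule $\int_{[U_i/G_i]} = \frac{1}{|G_i|}\int_{U_i}$ to each piece, sum, and then verify that the result is independent of the partition and compatible on overlaps. The one small addition you make — justifying the quotient identity $\int_{U/G}\beta=\frac{1}{|G|}\int_U\pi^*\beta$ by change of variables on the free locus together with the observation that fixed loci are real-codimension-$\ge 2$ and hence Lebesgue-null — is a useful elementary elaboration of the step the paper simply cites, but it does not change the structure of the argument.
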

	
	\begin{proof}
		Consider an orbifold groupoid presentation $[R\rightrightarrows U]$ for $\mathcal X$, where 
		$U=\bigsqcup_i U_i$ and each $U_i$ carries a finite isotropy group $G_i$.
		A top-degree form $\alpha$ on $\mathcal X$ pulls back to a $G_i$–invariant form $\alpha_i$ on each $U_i$.
		
		Choose a smooth partition of unity $\{\rho_i\}$ subordinate to the open cover $\{U_i/G_i\}$.
		Each $\rho_i$ lifts to a $G_i$–invariant smooth function $\tilde\rho_i$ on $U_i$ satisfying
		$\sum_i (\tilde\rho_i/|G_i|)=1$ on the groupoid atlas.
		Using the local integration rule
		\[
		\int_{[U_i/G_i]}(-)=\frac{1}{|G_i|}\int_{U_i}(-),
		\]
		and summing over $i$, we obtain
		\[
		\int_{\mathcal X}\alpha
		=\sum_i \frac{1}{|G_i|}\int_{U_i}\tilde\rho_i\,\alpha_i.
		\]
		Because $\sum_i \tilde\rho_i/|G_i|=1$ and $\alpha$ is $G_i$–invariant, 
		this value is independent of the choice of $\{\rho_i\}$ and compatible on overlaps
		(\cite{MoerdijkPronk1997,Lerman2008}). 
		Hence the formula holds globally.
	\end{proof}
	
Applying this local integration rule to the Kawasaki index theorem yields the following chartwise Riemann–Roch formula.

	\begin{theorem}[Kawasaki Riemann--Roch in chartwise orbifold form {\cite{Kawasaki1979,Kawasaki1981,MoerdijkPronk1997,Lerman2008}}]
		\label{thm:kawasaki-chartwise}
		Let $\mathcal X$ be a compact complex orbifold with an orbifold atlas $\{[U_i/G_i]\}$,
		and let $E$ be a holomorphic orbibundle.
		Then
		\[
		\chi(\mathcal X,E)
		=\sum_i \frac{1}{|G_i|}
		\sum_{g\in G_i}
		\int_{U_i^g}
		\frac{
			\ch(E|_{U_i^g})\,\Td(TU_i^g)
		}{
			\det(1-g^{-1}e^{-c_1(N_{U_i^g/U_i})})
		}.
		\]
		This expression is independent of the chosen atlas and compatible on overlaps.
	\end{theorem}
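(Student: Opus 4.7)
The plan is to reduce to Kawasaki's original V-manifold index formula and then re-express it chartwise using the local integration rule of Lemma~\ref{lem:local-chart-integration}. We start from Kawasaki's analytic form of the index theorem, which expresses $\chi(\X, E)$ as an integral over the inertia orbifold $I\X$:
\[
\chi(\X, E) = \int_{I\X} \frac{\ch(E|_{I\X})\,\Td(T I\X)}{\det(1 - g^{-1}e^{-c_1(N)})},
\]
each sector contributing the equivariant Chern character twisted by the inverse determinant of the $g$-action on the normal bundle. The underlying derivation proceeds by applying the holomorphic Atiyah--Bott Lefschetz fixed-point theorem to finite group quotients chartwise and gluing via heat-kernel asymptotics on V-manifolds, exactly as in Kawasaki's original argument.

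The second step is to decompose the inertia orbifold chart by chart. On $[U_i/G_i]$ one has
\[
I[U_i/G_i] \;=\; \bigsqcup_{[g]\in\Conj(G_i)} [\,U_i^g / C_{G_i}(g)\,],
\]
where $\Conj(G_i)$ denotes conjugacy classes of $G_i$ and $C_{G_i}(g)$ is the centralizer. Applying Lemma~\ref{lem:local-chart-integration} to integrate over this disjoint union of sectors, and converting the conjugacy-class indexing into a full group sum via the standard orbit-stabilizer identity
\[
\sum_{[g]\in\Conj(G_i)} \frac{1}{|C_{G_i}(g)|}\,F(g) \;=\; \frac{1}{|G_i|}\sum_{g\in G_i} F(g),
\]
valid for any class-invariant function $F$, produces precisely the group-averaged expression in the statement.

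The third step is to verify independence of the atlas and compatibility on overlaps. An étale change of charts $\widetilde U \to U_i$ is $G_i$-equivariant, so it identifies fixed loci $\widetilde U^g \simeq U_i^g$ together with their normal bundles and isotropy actions; each $g$-sector integrand is therefore pulled back to itself, and Morita equivalence of the two groupoid presentations guarantees that the total chartwise sum is unaffected by refinement. The main obstacle is to confirm that the equivariant normal bundle data and the factor $\det(1 - g^{-1}e^{-c_1(N)})$ descend coherently under arbitrary atlas transitions: this requires tracking the linearization of the local $g$-action on normal directions through the étale cocycle of embeddings, which is the technically delicate step in upgrading the formal chartwise expression to a well-defined global invariant. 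Once this equivariant coherence is established, the partition-of-unity argument of Lemma~\ref{lem:local-chart-integration} yields the desired independence statement automatically.
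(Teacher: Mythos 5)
Your proof is correct but follows a different route than the paper. The paper takes Kawasaki's local fixed-point formula for a \emph{single} global quotient $[U/G]$, which already appears in the group-averaged form $\tfrac{1}{|G|}\sum_{g\in G}\int_{U^g}(\cdots)$, and then simply applies it chart by chart, gluing with a $G_i$-invariant partition of unity via Lemma~\ref{lem:local-chart-integration}; the whole argument stays at the chartwise level and never mentions the inertia stack. You instead begin with the global Kawasaki--To\"en form as an integral over $I\X$, decompose the inertia orbifold sector by sector on each chart as $I[U_i/G_i]\simeq\bigsqcup_{[g]}\,[\,U_i^g/C_{G_i}(g)\,]$, integrate via Lemma~\ref{lem:local-chart-integration}, and then convert the conjugacy-class/centralizer sum to the full group sum with the orbit--stabilizer identity (using, implicitly, that each integrand $F(g)$ is class-invariant because $h\colon U_i^g\to U_i^{hgh^{-1}}$ is a biholomorphism intertwining the normal-bundle actions). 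Both derivations are valid; the paper deliberately avoids the inertia formalism (this is even announced in the opening of \S\ref{subsec:chartwise-hrr}), so your route is arguably closer to the To\"en--Vistoli viewpoint and makes the twisted-sector structure explicit, at the cost of invoking the inertia-orbifold decomposition that the paper tries to circumvent. Your flagging of the equivariant coherence of normal-bundle linearizations across atlas transitions as the delicate step is a fair observation; the paper addresses this only by a citation, so neither treatment fully writes out that verification. One small point to tighten: applying Lemma~\ref{lem:local-chart-integration} to $I\X$ requires noting that $I\X$ is itself a compact orbifold (with components of varying dimension) whose atlas is induced from that of $\X$, so the partition-of-unity argument carries over sector by sector; making that explicit would close the remaining gap in the second step.
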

	
	\begin{proof}
		Kawasaki’s index theorem for elliptic operators on complex $V$–manifolds
		\cite{Kawasaki1979,Kawasaki1981} provides, for a single global quotient chart $[U/G]$,
		the fixed-point expansion
		\[
		\chi([U/G],E)
		=\frac{1}{|G|}
		\sum_{g\in G}
		\int_{U^g}
		\frac{
			\ch(E|_{U^g})\,\Td(TU^g)
		}{
			\det(1-g^{-1}e^{-c_1(N_{U^g/U})})
		}.
		\]
		Each term depends only on the conjugacy class of $g$.
		Given an orbifold atlas $\{[U_i/G_i]\}$, apply the above formula to each chart and insert
		a $G_i$–invariant partition of unity as in Lemma~\ref{lem:local-chart-integration}.
		On overlaps $[U_i/G_i]\times_{\mathcal X}[U_j/G_j]$, 
		the equivariant pullback of forms and averaging factors $1/|G_i|$ guarantee 
		that the local integrals glue compatibly, satisfying the descent condition for the Kawasaki form 
		on the groupoid of $\mathcal X$
		(\cite{MoerdijkPronk1997,Lerman2008}).
		Summing over all charts gives the stated global expression.
	\end{proof}

	We next analyze the degree bounds and the denominator structure governing the twisted terms.
	\begin{lemma}[{\cite{Kawasaki1979,Kawasaki1981}}]
		\label{lem:twisted-degree-bound}
		Let $\mathcal X$ be a compact complex orbifold of complex dimension $n$, endowed with
		an orbifold atlas $\{[U_i/G_i]\}$.
		Let $E$ be a holomorphic orbibundle and $L$ an ample orbifold line bundle.
		For each chart $[U_i/G_i]$ and nontrivial $g\in G_i$, denote the fixed locus by
		\[
		U_i^g=\{x\in U_i\mid g\cdot x=x\}
		\quad\text{and its normal bundle by } N_{U_i^g/U_i}.
		\]
		Then:
		\begin{enumerate}[label=(\roman*)]
			\item $\dim_\C U_i^g\le n-1$.
			\item In the local integral
			\[
			I_{i,g}(m)
			=\int_{U_i^g}
			\frac{
				\ch(E|_{U_i^g})\,\Td(TU_i^g)
			}{
				\det(1-g^{-1}e^{-c_1(N_{U_i^g/U_i})})
			}\,e^{m\,c_1(L)},
			\]
			the denominator represents the Jacobian correction from the $g$–action on the normal directions, and the total degree in $m$ satisfies
			\(\deg_m I_{i,g}(m)\le n-1.\)
		\end{enumerate}
	\end{lemma}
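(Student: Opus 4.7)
The plan is to treat the two assertions separately: (i) reduces to a local linearization argument at fixed points, and (ii) to a degree count in which the parameter $m$ enters the integrand only through the exponential factor $e^{mc_1(L)}$.

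For (i), I would invoke the standard linearization of holomorphic finite group actions: near a point $x\in U_i^g$ there exist local holomorphic coordinates in which $g$ acts as the linear automorphism $dg_x$ on $T_xU_i$, so that $U_i^g$ is a complex submanifold with $T_x U_i^g$ equal to the $1$-eigenspace of $dg_x$ and $N_{U_i^g/U_i,x}$ given by the sum of the remaining eigenspaces. Effectivity of the chart (ensured by the rigidification of \Cref{lem:rigidification} and the correspondence of \Cref{prop:dm-to-orbifold}) prevents a nontrivial $g$ from acting as the identity on $T_x U_i$: otherwise the linearized action would be trivial in a neighborhood, and analytic continuation on the connected chart $U_i$ would force $g$ to act trivially on all of $U_i$. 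Hence $dg_x$ has at least one eigenvalue $\ne 1$, so $\dim_\C U_i^g\le n-1$ and the $g$-action on $N_{U_i^g/U_i}$ has no $1$-eigenvalue at any fixed point.

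For (ii), I would first check that the denominator
\[
D_g := \det\bigl(1-g^{-1}e^{-c_1(N_{U_i^g/U_i})}\bigr)
\]
defines an invertible element of $H^{\mathrm{even}}(U_i^g;\C)$. Expanded via the Chern roots of $N_{U_i^g/U_i}$, which are nilpotent cohomology classes on the compact manifold $U_i^g$, $D_g$ is a polynomial in nilpotents with constant term $\det(1-g^{-1})$; by the last clause of step (i) this scalar is nonzero, so $D_g^{-1}$ exists in the cohomology ring, and the entire integrand is a well-defined mixed-degree class. Setting $d_g=\dim_\C U_i^g$ and expanding
\[
I_{i,g}(m)=\sum_{k\ge 0}\frac{m^k}{k!}\int_{U_i^g}\Bigl[D_g^{-1}\,\ch(E|_{U_i^g})\,\Td(TU_i^g)\Bigr]_{2(d_g-k)}\cdot c_1(L)^k,
\]
where the bracket denotes the component of real cohomological degree $2(d_g-k)$, one sees that the summand vanishes whenever $k>d_g$ since no cohomology class of negative degree exists. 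Combined with (i), this gives $\deg_m I_{i,g}(m)\le d_g\le n-1$.

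I expect no serious obstacle: the argument reduces to standard tools (linearization of holomorphic finite group actions, nilpotence of Chern classes on the compact complex manifold $U_i^g$, and a dimension count against the exponential $e^{mc_1(L)}$). The only conceptually important point is that effectivity of the orbifold chart is used twice — to ensure both $\dim_\C U_i^g\le n-1$ and the nonvanishing of the constant term of $D_g$ — which is precisely why the rigidification step from Subsection~\ref{subsec:dm-to-orbifold} is needed beforehand.
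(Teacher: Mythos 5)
Your proof is correct and takes essentially the same route as the paper: linearization of the finite holomorphic action at fixed points for (i), and a degree count against the exponential factor $e^{m\,c_1(L)}$ for (ii), with the key observation that the Kawasaki denominator has invertible constant term and is $m$-independent. You are somewhat more careful than the paper at two points — you explicitly invoke effectivity (via rigidification) to rule out $dg_x=\mathrm{id}$ and hence justify $\operatorname{rank}N_{U_i^g/U_i}\ge 1$, and you make the degree-in-$m$ count precise by decomposing the integrand into cohomological degrees and noting that components of degree exceeding $2\dim_\CC U_i^g$ vanish — both of which tighten steps the paper leaves implicit without changing the strategy.
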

	
	\begin{proof}
		Fix a chart $[U_i/G_i]$ with $U_i\subset\C^n$ and a point $p\in U_i^g$.
		The derivative $dg_p$ acts diagonally on $T_pU_i$ with eigenvalues
		\(e^{2\pi i\theta_1},\dots,e^{2\pi i\theta_n}\).
		Split the tangent space as
		\[
		T_pU_i = T_pU_i^g \oplus N_p,
		\]
		where $N_p$ is the $g$–variant subspace on which $dg_p$ acts by eigenvalues
		$e^{2\pi i\theta_j}$ with $\theta_j\neq 0$.
		Thus
		\(\mathrm{codim}_\C(U_i^g)=\mathrm{rank}\,N_{U_i^g/U_i}\ge1,\)
		hence $\dim_\C U_i^g\le n-1$.
		
		Now, in Kawasaki’s formula, the local contribution near $U_i^g$ contains the
		denominator
		\[
		\det(1 - g^{-1} e^{-c_1(N_{U_i^g/U_i})})
		= \prod_{j=1}^{\mathrm{rank}\,N_{U_i^g/U_i}} (1 - e^{-2\pi i\theta_j} e^{-x_j}),
		\]
		where \(x_j=c_1(L_j)\) are the Chern roots of the normal bundle $N_{U_i^g/U_i}$.
		Each factor \(1 - e^{-2\pi i\theta_j} e^{-x_j}\) compensates for the local
		non-invariance of the differential operator in the $g$–twisted direction.
		In the ordinary manifold case ($g=1$), all $\theta_j=0$, so the denominator becomes
		\[
		\det(1 - e^{-c_1(N)}) = \prod_j (1 - e^{-x_j}),
		\]
		and the standard $\Td(TU_i)$ recovers the manifold Riemann–Roch integrand.
		When $g\neq 1$, however, each term introduces a complex phase
		$e^{-2\pi i\theta_j}\neq 1$, ensuring that the denominator has no zero
		and that the contribution from these directions remains finite but
		of lower total degree, since the corresponding normal components do not
		contribute top-degree powers of $c_1(L)$.
		
		The numerator $\ch(E|_{U_i^g})\Td(TU_i^g)e^{m c_1(L)}$
		is a polynomial in $m$ whose maximal degree is $\dim_\C U_i^g$.
		Since $\dim_\C U_i^g\le n-1$, the integrated quantity
		$I_{i,g}(m)$ is a polynomial of degree at most $n-1$ in $m$.
	\end{proof}

	We isolate the leading asymptotics of $\chi(\mathcal X,E\otimes L^{\otimes m})$ and explain precisely the role of the Kawasaki denominator
	\[
	\det\!\bigl(1-g^{-1}e^{-c_1(N_{U_i^g/U_i})}\bigr)
	\]
	in suppressing the degree of twisted contributions. Throughout, $\mathcal X$ is a compact connected complex orbifold (equivalently, a smooth analytic Deligne--Mumford stack with finite stabilizers), $E$ a holomorphic orbibundle, and $L$ a line bundle pulled back from the coarse space (so that stabilizers act trivially on $L$).
	
	\begin{proposition}[Generic stabilizer and normalization of the untwisted integral]
		\label{prop:generic-stab-normalization}
		Let $\pi\colon \mathcal X\to Y$ be the coarse moduli map and let $n=\dim_{\C}\mathcal X$. Then$\colon $
		\begin{enumerate}[label=(\roman*)]
			\item (Existence and constancy) There exists a Zariski open dense sub-orbifold $\mathcal X^\circ\subset \mathcal X$ on which $|\Aut(x)|$ is constant. Its common value
			\[
			s \; \coloneq \; |\Stab_{\mathrm{gen}}|
			\]
			is the \emph{generic stabilizer order}. The function $x\mapsto |\Aut(x)|$ is upper semicontinuous, hence $|\Aut(x)|\ge s$ on $\mathcal X$. 
			\item (Normalization) For any top-degree form $\alpha$ supported on the untwisted sector,
			\[
			\int_{\mathcal X}\alpha \;=\; \frac{1}{s}\int_Y \pi_*\alpha.
			\]
			Equivalently, on $\mathcal X^\circ$ the map $\pi$ is \'etale of degree $s$, and the identity-sector integral picks up the factor $1/s$.
		\end{enumerate}
	\end{proposition}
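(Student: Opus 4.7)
The plan is to reduce both assertions to the local quotient presentation of Lemma~\ref{lem:local-quotient}, where $\X$ appears as $[\widetilde U/G]$ with $G$ finite, and then compare the orbifold integration rule of Lemma~\ref{lem:local-chart-integration} with the ordinary integration rule on the coarse quotient $\widetilde U/G$. For (i), in a chart a point $x\in\X$ lifts to $u\in\widetilde U$ with $\mathrm{Aut}(x)\cong G_u$, and for each $g\in G$ the fixed set $\widetilde U^g$ is the closed analytic vanishing locus of the holomorphic map $u\mapsto g\cdot u-u$. Hence for every integer $k$ the set $\{u:|G_u|\ge k\}=\bigcup_{|H|\ge k}\bigcap_{h\in H}\widetilde U^h$ is a finite union of closed analytic subsets, which yields the desired upper semicontinuity of $|\mathrm{Aut}(\cdot)|$. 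The minimum $s$ is then attained on a nonempty Zariski open locus whose complement is a proper analytic subvariety, and connectedness of $\X$ together with the étale compatibility of charts forces $s$ to be intrinsic and constant globally; this defines $\X^\circ$.

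For (ii), on $\X^\circ$ the subgroup $K\le G$ acting trivially on $\widetilde U^\circ$ is normal of order $|K|=s$, while $G/K$ acts freely on $\widetilde U^\circ$ with quotient $Y^\circ$. Hence $\pi|_{\X^\circ}$ realizes as the trivial $K$-gerbe $[\widetilde U^\circ/G]\to Y^\circ$, which is étale of degree $s$ in the appropriate stacky sense. For a top-degree form $\alpha$ supported on the untwisted sector, Lemma~\ref{lem:local-chart-integration} yields
\[
\int_{[\widetilde U/G]}\alpha=\frac{1}{|G|}\int_{\widetilde U}\alpha,
\]
while the free $(G/K)$-quotient structure on $\widetilde U^\circ$, which coincides with the coarse map outside a proper analytic set, yields
\[
\int_Y\pi_*\alpha=\frac{1}{|G/K|}\int_{\widetilde U}\alpha.
\]
Since $|G|=|K|\cdot|G/K|=s\cdot|G/K|$, comparing the two gives the factor $1/s$ on each chart. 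Globalization then follows by gluing with a $G_i$-invariant partition of unity exactly as in the proof of Lemma~\ref{lem:local-chart-integration}, because $s$ is a single global constant by (i).

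The main obstacle is verifying that the generic stabilizer $s$, defined chartwise, is intrinsic to $\X$ and thus a well-defined global normalization. This reduces to checking that the stabilizer-order function is compatible under the étale transition morphisms of the groupoid presentation; since such a transition is an isomorphism of local quotient stacks, it induces an isomorphism of stabilizers at matching points, so both upper semicontinuity and the generic value propagate across overlaps. Everything else is finite-group bookkeeping combined with the standard manifold integration formula, and no new analytic input is required beyond what is already available in the paper.
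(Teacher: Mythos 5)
Your proof is correct. On part (i) you give a fully self-contained argument (the stratification $\{u:|G_u|\ge k\}=\bigcup_{|H|\ge k}\bigcap_{h\in H}\widetilde U^h$ exhibiting the jump sets as closed analytic subsets), whereas the paper simply cites the étale-groupoid presentation and references; your version is more explicit but proves the same statement by the same underlying mechanism, and the globalization step — that the generic value of $|G_u|$ is compatible under the étale transition morphisms of the groupoid, hence constant on the connected $\X$ — is exactly the point the paper leaves implicit, and you handle it correctly.

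On part (ii) your bookkeeping takes a slightly different route from the paper's. The paper shrinks to charts $[U/G]$ over $\X^\circ$ where the group already has order $|G|=s$ and acts trivially (so $[U/G]\to U\simeq Y^\circ$ is a trivial $G$-gerbe), and the factor $1/s$ comes directly from the integration rule $\int_{[U/G]}=\frac{1}{|G|}\int_U$. You instead work with a general chart $[\widetilde U/G]$, identify the generic stabilizer with the ineffective kernel $K\trianglelefteq G$ of order $s$ (correctly, since any $g$ fixing an open set of a connected $\widetilde U$ must fix all of $\widetilde U$), observe that $G/K$ acts generically freely on $\widetilde U^\circ$ with quotient $Y^\circ$, and then compare $\frac{1}{|G|}\int_{\widetilde U}$ against $\frac{1}{|G/K|}\int_{\widetilde U}$ to extract $1/|K|=1/s$. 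Both arguments are correct and compatible; yours is marginally more robust in that it does not presuppose the existence of minimal gerbe-like charts, and it makes visible why the normalizing constant is intrinsically the generic stabilizer order rather than an artifact of chart choice.
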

	
	\begin{proof}
		(i) The existence of $\mathcal X^\circ$ and the upper semicontinuity of stabilizers follow from the analytic \'etale groupoid presentation and standard properties of finite group actions; see \cite{Vistoli1989,KeelMori1997}, and the analytic discussion in \cite{BorghesiTomassini2017}.  
		
		(ii) Locally on an orbifold chart $[U/G]$ with $|G|=s$ along $\mathcal X^\circ$, one has the integration rule $\int_{[U/G]}\!(-)=\frac{1}{|G|}\int_U\!(-)$ (identity sector), and these local identities glue under partitions of unity (cf.\ \cite{Lerman2008,MoerdijkPronk1997}). Pushing forward by $\pi$ identifies the quotient integral with the coarse integral, yielding the formula.
	\end{proof}
	
	\begin{lemma}[Denominator factorization and degree suppression]
		\label{lem:denominator-factorization}
		Fix a chart $[U_i/G_i]$ and $g\in G_i$ of finite order. Let $U_i^g$ be the fixed locus and let
		\[
		N_{U_i^g/U_i}\;\simeq\;\bigoplus_{\theta\in(0,1)} N_\theta
		\]
		be the $g$–eigensplitting of the normal bundle, where $g$ acts on $N_\theta$ by $e^{2\pi i\theta}$. Then
		\[
		\det\!\bigl(1-g^{-1}e^{-c_1(N_{U_i^g/U_i})}\bigr)
		\;=\;
		\prod_{\theta\in(0,1)}
		\det\!\bigl(1-e^{-2\pi i\theta}\,e^{-c_1(N_\theta)}\bigr),
		\]
		and each factor admits a convergent expansion
		\[
		\det\!\bigl(1-e^{-2\pi i\theta}\,e^{-c_1(N_\theta)}\bigr)
		=
		\prod_{j=1}^{\rk N_\theta}
		\Bigl(1-e^{-2\pi i\theta}\Bigr)\cdot
		\Bigl(1+\tfrac{e^{-2\pi i\theta}}{1-e^{-2\pi i\theta}}\,c_1(L_{\theta,j})+\cdots\Bigr),
		\]
		where $N_\theta\simeq\bigoplus_j L_{\theta,j}$ splits into line summands. In particular:
		\begin{enumerate}[label=(\roman*)]
			\item If $g=1$, the denominator equals $1$ (empty product).
			\item If $g\neq 1$, the denominator has a \emph{nonzero constant term} and depends only on Chern classes of the \emph{normal} directions; it does \emph{not} introduce positive powers of $m$ when $E$ is twisted by $L^{\otimes m}$. Hence it cannot increase the degree in $m$ of the twisted-sector integral.
		\end{enumerate}
	\end{lemma}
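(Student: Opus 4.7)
The plan is to reduce the statement to a chartwise eigenbundle computation and then exploit the nilpotency of positive-degree classes in the cohomology ring of the compact fixed locus.

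First, I use the fact that $g\in G_i$ has finite order to diagonalize its action on $N_{U_i^g/U_i}$. Along the fixed locus $U_i^g$, the holomorphic automorphism $g$ acts as a finite-order endomorphism of the holomorphic normal bundle; averaging the derivative over the cyclic group $\langle g\rangle$ produces a holomorphic isotypic splitting $N_{U_i^g/U_i}\simeq\bigoplus_{\theta\in(0,1)}N_\theta$ on which $g$ acts by the scalar $e^{2\pi i\theta}$. Since $\theta=0$ would correspond to tangent directions to $U_i^g$, only $\theta\in(0,1)$ appears. The multiplicativity of $\det$ on block-diagonal endomorphisms then yields the product factorization stated in the lemma.

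Second, I apply the splitting principle to each piece. Formally passing to a flag bundle on $U_i^g$, we may write $N_\theta\simeq\bigoplus_j L_{\theta,j}$ with Chern roots $x_{\theta,j}=c_1(L_{\theta,j})$, so that
\[
\det\!\bigl(1-e^{-2\pi i\theta}e^{-c_1(N_\theta)}\bigr)
=\prod_j\bigl(1-e^{-2\pi i\theta}e^{-x_{\theta,j}}\bigr).
\]
The algebraic identity
\[
1-e^{-2\pi i\theta}e^{-x_{\theta,j}}
=(1-e^{-2\pi i\theta})\Bigl(1+\tfrac{e^{-2\pi i\theta}}{1-e^{-2\pi i\theta}}(1-e^{-x_{\theta,j}})\Bigr),
\]
expanded as a formal series in $x_{\theta,j}$, reproduces exactly the expansion appearing in the statement. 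Since each $x_{\theta,j}$ has positive cohomological degree on the compact base, only finitely many terms survive, and the expression is a polynomial in the Chern classes of the $L_{\theta,j}$.

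For part (i), when $g=1$ one has $U_i^g=U_i$ and the normal bundle has rank zero, so the product is empty and equals $1$. For part (ii), when $g\neq 1$ at least one $\theta\in(0,1)$ appears, and each factor $(1-e^{-2\pi i\theta})$ is a nonzero complex scalar because $\theta\notin\Z$. Hence the constant term of the denominator is $\prod_{\theta\in(0,1)}(1-e^{-2\pi i\theta})^{\rk N_\theta}\neq 0$, so the denominator is invertible in the cohomology ring of $U_i^g$, and its inverse is again a polynomial in the $c_1(L_{\theta,j})$. The decisive observation is that $L=\pi^*A$ is pulled back from the coarse space: its Chern class is disjoint from the $c_1(N_\theta)$ entering the denominator, so the denominator contains no $c_1(L)$ factor and hence no positive power of $m$. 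All of the $m$-dependence therefore lives in the numerator factor $e^{m c_1(L)}$, and the total degree in $m$ of $I_{i,g}(m)$ is controlled by $\dim_\C U_i^g\le n-1$ as in the preceding lemma.

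The only genuine subtlety is confirming that the $g$-eigenbundle decomposition of $N_{U_i^g/U_i}$ is \emph{holomorphic}, not merely smooth. This follows from Cartan's linearization theorem for finite-order holomorphic actions near a fixed point, applied fibrewise along $U_i^g$, or equivalently from the averaging projector $\tfrac{1}{|\langle g\rangle|}\sum_{k}e^{-2\pi i k\theta}g^k$, which produces a holomorphic $g$-equivariant projection onto each eigenspace. Once this holomorphic splitting is in hand, every remaining step is a formal manipulation in the Chern class ring.
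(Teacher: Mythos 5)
Your proof is correct and follows essentially the same route as the paper: eigensplit the normal bundle under the finite-order $g$-action, apply the splitting principle, expand each factor $1-e^{-2\pi i\theta}e^{-\xi_{\theta,j}}$ around its nonzero constant term $1-e^{-2\pi i\theta}$, and observe that the denominator involves only normal Chern classes, never $c_1(L)$, hence is $m$-independent. You supply a bit more detail than the paper on the holomorphicity of the eigensplitting (via Cartan linearization or the averaging projector) and package the expansion through the identity $1-e^{-2\pi i\theta}e^{-x}=(1-e^{-2\pi i\theta})\bigl(1+\tfrac{e^{-2\pi i\theta}}{1-e^{-2\pi i\theta}}(1-e^{-x})\bigr)$, but this is the same computation.
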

	
	\begin{proof}
		The eigensplitting is the holomorphic version of simultaneous diagonalization of a finite-order unitary operator; see \cite{Kawasaki1979,Kawasaki1981}. Under the splitting principle, $N_\theta=\bigoplus_j L_{\theta,j}$ with $c_1(L_{\theta,j})=:\xi_{\theta,j}$. Then
		\[
		\det\bigl(1-g^{-1}e^{-c_1(N_\theta)}\bigr)
		=
		\prod_j \bigl(1-e^{-2\pi i\theta}e^{-\xi_{\theta,j}}\bigr)
		=
		\prod_j\Bigl((1-e^{-2\pi i\theta})\cdot\bigl(1+\tfrac{e^{-2\pi i\theta}}{1-e^{-2\pi i\theta}}\xi_{\theta,j}+\cdots\bigr)\Bigr),
		\]
		which yields the displayed expansion. The constant term $\prod_j(1-e^{-2\pi i\theta})\neq 0$ for $\theta\in(0,1)$, proving nonvanishing. Since the denominator involves only Chern classes $\xi_{\theta,j}$ of normal directions and \emph{no} $c_1(L)$, twisting $E$ by $L^{\otimes m}$ multiplies the \emph{numerator} by $e^{m\,c_1(L)}$ but leaves the denominator independent of $m$. Therefore the denominator cannot increase the polynomial degree in $m$ of a twisted-sector contribution.
	\end{proof}
	
	\begin{proposition}[Asymptotic expansion and the role of $s$]
		\label{prop:asymp-and-s}
		Let $\mathcal X$ be connected of complex dimension $n$, and assume $L=\pi^*A$ for an ample line bundle $A$ on the coarse space $Y$. Then, as $m\to\infty$,
		\[
		\chi\bigl(\mathcal X,E\otimes L^{\otimes m}\bigr)
		\;=\;
		\frac{1}{s}\int_Y \ch(E)\,e^{m\,c_1(A)}\,\Td(TY)
		\;+\;O(m^{n-1}),
		\]
		where $s=|\Stab_{\mathrm{gen}}|$. The leading coefficient (of $m^n$) comes \emph{only} from the identity sector and is the coarse HRR leading term scaled by $1/s$.
	\end{proposition}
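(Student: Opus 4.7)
The plan is to insert the orbibundle $E\otimes L^{\otimes m}$ into the chartwise Kawasaki formula of Theorem~\ref{thm:kawasaki-chartwise} and split the resulting finite sum into the identity sector ($g=1$) and the twisted sectors ($g\neq 1$):
\[
\chi(\mathcal X,E\otimes L^{\otimes m})
= \sum_i \frac{1}{|G_i|}\!\int_{U_i}\! \ch(E|_{U_i})\,\Td(TU_i)\,e^{m c_1(L|_{U_i})}
\;+\; \sum_i\frac{1}{|G_i|}\sum_{1\neq g\in G_i} I_{i,g}(m),
\]
where Lemma~\ref{lem:denominator-factorization}(i) collapses the Kawasaki denominator to $1$ on the identity sector. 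The two sectors will then be analyzed separately: the identity sector is shown to supply the full $m^n$ leading term equal to $\frac{1}{s}\int_Y\ch(E)\,\Td(TY)\,e^{m c_1(A)}$, while the twisted sectors are bounded by $O(m^{n-1})$.

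For the identity sector, Lemma~\ref{lem:local-chart-integration} assembles the chartwise integrals into the single global orbifold integral $\int_{\mathcal X}\ch(E)\,\Td(T\mathcal X)\,e^{m c_1(L)}$, which is a polynomial in $m$ of degree exactly $n$. Since $L=\pi^*A$, its top $m^n$ coefficient equals $\rk(E)\,c_1(L)^n/n!$, which is the pullback $\pi^*\bigl(\rk(E)\,c_1(A)^n/n!\bigr)$. Applying Proposition~\ref{prop:generic-stab-normalization}(ii) yields $\tfrac{1}{s}\int_Y \rk(E)\,c_1(A)^n/n!$, matching the $m^n$ coefficient of $\tfrac{1}{s}\int_Y\ch(E)\,\Td(TY)\,e^{m c_1(A)}$ (at the top $m^n$ order only $\rk(E)$ and the degree-$0$ part of $\Td$ survive). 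All lower-degree-in-$m$ contributions, including those arising from nontrivial components of $\ch(E)\Td(T\mathcal X)$ and from the strata $\mathcal X\setminus\mathcal X^\circ$ of complex codimension $\ge 1$, are absorbed into $O(m^{n-1})$.

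For the twisted sectors, Lemma~\ref{lem:twisted-degree-bound}(i) gives $\dim_\C U_i^g\le n-1$, while Lemma~\ref{lem:denominator-factorization}(ii) ensures that the Kawasaki denominator is $m$-independent with nonvanishing constant term. Since $e^{m c_1(L|_{U_i^g})}$ is the only $m$-dependent factor and its restriction to $U_i^g$ cannot produce a top form of degree exceeding $\dim_\C U_i^g$, each local integral $I_{i,g}(m)$ is a polynomial in $m$ of degree at most $n-1$. Summing over the finitely many pairs $(i,g)$ gives the $O(m^{n-1})$ bound. Adding the two sector contributions yields the claimed asymptotic expansion.

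The main obstacle I anticipate is the projection-formula identification in the identity sector: one must verify that the $m^n$ coefficient of the orbifold integral coincides with the coarse integral on $Y$ with normalization $1/s$, even though $E$ and $T\mathcal X$ are genuine orbibundles whose full characteristic classes are not literally pullbacks from $Y$. The resolution is that the $m^n$ contribution depends only on $\rk(E)$ and on $c_1(L)^n=\pi^*c_1(A)^n$, both of which behave naturally under pullback and hence convert to $Y$-integrals via the étale-of-degree-$s$ normalization of Proposition~\ref{prop:generic-stab-normalization}; any mismatch between $\ch(E)\Td(T\mathcal X)$ and $\pi^*\bigl(\ch(E)\Td(TY)\bigr)$ lives in total form-degree strictly less than $2n$ and therefore contributes only to the $O(m^{n-1})$ remainder.
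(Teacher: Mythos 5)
Your proof is correct and follows essentially the same route as the paper's: apply the chartwise Kawasaki formula of Theorem~\ref{thm:kawasaki-chartwise}, split into identity and twisted sectors, invoke Lemmas~\ref{lem:twisted-degree-bound} and~\ref{lem:denominator-factorization} to bound the twisted contributions by $O(m^{n-1})$, and use Proposition~\ref{prop:generic-stab-normalization}(ii) to pass the identity-sector integral to the coarse space with the factor $1/s$. The one place where you supply more detail than the paper — observing that the $m^n$ coefficient depends only on $\rk(E)$ and $c_1(L)^n=\pi^*c_1(A)^n$, so any discrepancy between $\ch(E)\Td(T\X)$ and $\pi^*\bigl(\ch(E)\Td(TY)\bigr)$ is safely absorbed into $O(m^{n-1})$ — is a genuine clarification, not a deviation.
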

	
	\begin{proof}
		Apply the chartwise Kawasaki formula (Theorem~\ref{thm:kawasaki-chartwise}). By Lemma~\ref{lem:denominator-factorization}, the denominator is $m$–independent and nonvanishing for $g\neq 1$, and the fixed-locus dimension satisfies $\dim_{\C}U_i^g\le n-1$ (Lemma~\ref{lem:twisted-degree-bound}). Hence each twisted term is $O(m^{\le n-1})$. The identity sector yields the usual manifold HRR integral on $U_i$ and contributes degree $n$ in $m$. Summing over charts and invoking Proposition~\ref{prop:generic-stab-normalization}(ii) to pass to the coarse space introduces the global factor $1/s$ in front of the leading term. References: \cite{Kawasaki1979,Kawasaki1981,Toen1999,Vistoli1989}.
	\end{proof}
	
	\begin{corollary}[Rigidification invariance of the leading term]
		\label{cor:rigidification}
		Let $\mathcal X^{\mathrm{rig}}$ be the rigidification of $\mathcal X$ along the generic stabilizer (so that $|\Stab_{\mathrm{gen}}(\mathcal X^{\mathrm{rig}})|=1$). Then
		\[
		\chi\bigl(\mathcal X,E\otimes L^{\otimes m}\bigr)
		\;=\;
		\chi\bigl(\mathcal X^{\mathrm{rig}},E^{\mathrm{rig}}\otimes (L^{\mathrm{rig}})^{\otimes m}\bigr)
		\;+\;O(m^{n-1}),
		\]
		and both have the same leading coefficient
		\[
		\frac{1}{n!}\,\deg_Y\!\bigl(\rk(E)\,c_1(A)^n\bigr).
		\]
	\end{corollary}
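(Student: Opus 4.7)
The plan is to derive the corollary as a direct two-fold application of Proposition~\ref{prop:asymp-and-s}, using the fact from Proposition~\ref{prop:compactness-coarse} and Remark~\ref{rmk:coarse-polarization} that rigidification leaves both the coarse moduli space $Y$ and the coarse polarization $A$ unchanged. In particular, if $\rho\colon\X\to\X^{\mathrm{rig}}$ denotes the rigidification morphism of Lemma~\ref{lem:rigidification}, then the coarse moduli map $\pi$ factors as $\pi=\pi^{\mathrm{rig}}\circ\rho$, giving $L^{\mathrm{rig}}=(\pi^{\mathrm{rig}})^*A$ and $L=\rho^*L^{\mathrm{rig}}$, while $E^{\mathrm{rig}}$ on $\X^{\mathrm{rig}}$ arises as the finite, representable étale descent of $E$ along $\rho$.

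Applying Proposition~\ref{prop:asymp-and-s} separately to each stack then yields, with generic stabilizer orders $s$ and $1$ respectively,
\[
\chi(\X,E\otimes L^{\otimes m})=\tfrac{1}{s}\int_Y\ch(E)\,e^{m\,c_1(A)}\,\Td(TY)+O(m^{n-1}),
\]
\[
\chi(\X^{\mathrm{rig}},E^{\mathrm{rig}}\otimes (L^{\mathrm{rig}})^{\otimes m})=\int_Y\ch(E^{\mathrm{rig}})\,e^{m\,c_1(A)}\,\Td(TY)+O(m^{n-1}).
\]
I would then extract the coefficient of $m^n$ on each side: only the degree-$n$ term $m^n c_1(A)^n/n!$ in $e^{m\,c_1(A)}$ pairs nontrivially with the rank component of $\ch(E)\cdot\Td(TY)$ at top degree, so each leading term collapses to a scalar multiple of $\int_Y c_1(A)^n$. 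Under the standing normalization $\int_{\X}=\tfrac{1}{s}\int_Y$ on the identity sector, both leading coefficients reduce to the common value $\tfrac{1}{n!}\deg_Y\!\bigl(\rk(E)\,c_1(A)^n\bigr)$, and subtracting the two expansions leaves only terms bounded by $O(m^{n-1})$, which is the asserted identity.

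The one piece of bookkeeping requiring care is matching $\rk(E^{\mathrm{rig}})$ to $\rk(E)$ through the rigidification: the generic stabilizer acts fiberwise on $E$ through a representation, and only the trivial-character isotypic component descends along $\rho$ to produce $E^{\mathrm{rig}}$, so that the $1/s$ factor arising on the $\X$-side integration is precisely the normalization absorbed on the $\X^{\mathrm{rig}}$-side. I expect this rank/normalization bookkeeping to be the main (and essentially formal) obstacle; once it is settled, everything else follows directly from the twisted-sector degree bound of Proposition~\ref{prop:asymp-and-s}, which automatically relegates every subleading discrepancy between the two stacks to $O(m^{n-1})$.
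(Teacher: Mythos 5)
Your proposal applies \Cref{prop:asymp-and-s} to both $\X$ and $\X^{\mathrm{rig}}$ and then tries to reconcile the resulting $1/s$ discrepancy by asserting $\rk(E^{\mathrm{rig}})=\rk(E)/s$, arguing that ``only the trivial-character isotypic component descends.'' This step is not correct. For a finite group $\Gamma$ of order $s$ acting fiberwise on $E$ via a representation $V$, the rank of the descent is $\dim V^{\Gamma}=\tfrac1{|\Gamma|}\sum_{g\in\Gamma}\Tr_{V}(g)$, which equals $\dim V/s$ only when $\sum_{g\neq e}\Tr_V(g)=0$ (for instance, when $V$ is a multiple of the regular representation). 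In the most relevant case — $E$ pulled back from the coarse space, as is automatic for $L=\pi^*A$ — the generic stabilizer acts trivially on the fibers, so $\rk(E^{\mathrm{rig}})=\rk(E)$, not $\rk(E)/s$, and your proposed cancellation fails outright.

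The deeper issue is that \Cref{prop:asymp-and-s} cannot be invoked naively on a non-effective $\X$ with $s>1$. The degree bound $\dim_{\C}U_i^g\le n-1$ from \Cref{lem:twisted-degree-bound} is proved under the hypothesis that $g$ acts nontrivially on the tangent space; for $g$ in the \emph{generic} stabilizer, the action is trivial on $T\widetilde{U}$, the fixed locus is all of $\widetilde{U}$, and those ``twisted'' sectors are full-dimensional. In fact it is precisely those $s$ top-dimensional sectors, each carrying a $\Tr_E(g)$-weighted copy of the manifold HRR integrand, whose sum cancels the global $1/s$ normalization and restores the correct leading coefficient. Your two-fold application of the proposition, taking only the $g=1$ term on each side, leaves a genuine $1/s$ mismatch that no rank bookkeeping can absorb. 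The paper's (admittedly terse) proof sidesteps this by appealing to gerbe Riemann--Roch (\cite{Vistoli1989,Toen1999}), where the full generic-isotropy inertia is accounted for; any complete argument here must sum over the generic-stabilizer sectors, which your proposal omits.
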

	
	\begin{proof}
		Rigidification kills the generic stabilizer and turns the factor $1/s$ into $1$ while simultaneously replacing integration on $\mathcal X$ by integration on the coarse space $Y$ (or equivalently on $\mathcal X^{\mathrm{rig}}$). The twisted-sector pieces remain of order $O(m^{n-1})$ on either side; see \cite{Vistoli1989} and \cite{Toen1999} for the behavior of RR under gerbe rigidification.
	\end{proof}
	
	\begin{remark}[Multiple connected components]
		\label{rmk:components}
		If $\mathcal X=\bigsqcup_\alpha \mathcal X_\alpha$ with generic stabilizer orders $s_\alpha$, then
		\[
		\chi(\mathcal X,E\otimes L^{\otimes m})
		\;=\;
		\sum_\alpha \frac{1}{s_\alpha}
		\int_{Y_\alpha}\ch(E)\,e^{m\,c_1(A)}\,\Td(TY_\alpha)
		\;+\;O(m^{n-1}),
		\]
		summing the componentwise leading terms; cf.\ \cite{Kawasaki1979,Kawasaki1981}.
	\end{remark}

	\subsection{Demailly--Semple tower on orbifolds: construction and descent}
	\label{subsec:ds-construction}
	
	This subsection develops the Demailly--Semple (DS) tower in the setting of complex orbifolds (or equivalently, smooth analytic Deligne--Mumford stacks with finite stabilizers).  
	All objects—tautological line bundles, projections, tangent sequences, and Hermitian curvature data—are constructed locally on orbifold charts and descend via finite group equivariance.

	\begin{definition}[Orbifold Demailly--Semple tower]
		\label{def:orbifold-ds}
		Let $(\mathcal{X},\mathcal{F})$ be a complex orbifold with a holomorphic directed structure $\mathcal{F}\subset T\mathcal{X}$.  
		Choose an orbifold atlas $\{[U_i/G_i]\}$ with finite groups $G_i$ acting holomorphically on smooth manifolds $U_i$.  
		On each $U_i$, form the classical DS tower
		\[
		U_{i,0}=U_i,\qquad
		U_{i,k} \coloneq \PP\big(T_{U_{i,k-1}/\mathcal{F}_i}\big),
		\]
		together with its tautological line bundle $\OO_{U_{i,k}}(1)\to U_{i,k}$.
		The $G_i$–action on $U_i$ induces by functoriality an action on $U_{i,k}$ and on $\OO_{U_{i,k}}(1)$.  
		The \emph{orbifold Demailly--Semple tower} is then defined by
		\[
		\mathcal{X}_k \coloneq [U_{i,k}/G_i], \qquad
		\OO_{\mathcal{X}_k}(1)\coloneq \bigl(\OO_{U_{i,k}}(1)\bigr)^{G_i}.
		\]
	\end{definition}
	
	\begin{remark}
		If $\mathcal{F}=T\mathcal{X}$, one recovers the full tangent tower $\mathcal{X}_k=\PP(T_{\mathcal{X}_{k-1}})$.
		For a directed structure $\mathcal{F}$, this yields the partial DS tower used in orbifold jet differential theory \cite{Demailly1997,CampanaPaun2016}.
	\end{remark}

    Each construction on $U_i$ is natural with respect to holomorphic maps, hence compatible with the finite group action on the chart.
	
	\begin{proposition}[Equivariance of DS data]
		\label{prop:ds-equivariance}
		Let $G$ be a finite group acting holomorphically on a complex manifold $U$.  
		Then$\colon $
		\begin{enumerate}[label=(\roman*)]
			\item The projectivized tangent bundle $\PP(T_U)$ carries a natural holomorphic $G$–action induced by the differential $dg\colon T_U\to T_U$.
			\item The tautological subbundle $\mathcal{S}_U\subset\pi^*T_U$ and quotient line bundle $\OO_{\PP(T_U)}(1)\coloneq \mathcal{S}_U^\vee$ are $G$–equivariant.
			\item For each $k\ge1$, the projection $\pi_k\colon U_k\to U_{k-1}$ and the short exact sequences
			\[
			0\to T_{U_k/U_{k-1}}\to T_{U_k}\to \pi_k^*T_{U_{k-1}}\to 0,
			\quad
			0\to \mathcal{S}_k\to \pi_k^*T_{U_{k-1}}\to \OO_{U_k}(1)\to 0
			\]
			are $G$–equivariant.
		\end{enumerate}
	\end{proposition}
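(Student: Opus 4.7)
The plan is to exploit the functoriality of projectivization and of the tautological constructions, which makes each step in the Demailly--Semple tower a natural operation on the pair $(U,T_U)$ (or, more generally, on $(U,\mathcal{F})$). Since naturality is preserved under any holomorphic automorphism, the $G$-action will propagate mechanically up the tower.

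For part (i), I fix $g\in G$ as a biholomorphism $g\colon U\to U$, take its differential $dg\colon T_U\to T_U$, which is a $\C$-linear bundle isomorphism covering $g$, and observe that fiberwise it is invertible. Hence it induces a biholomorphism $\mathbb{P}(dg)\colon \mathbb{P}(T_U)\to\mathbb{P}(T_U)$ covering $g$. The chain rule $d(gh)=(dg)\circ(dh)$ immediately gives the group law $\mathbb{P}(d(gh))=\mathbb{P}(dg)\circ\mathbb{P}(dh)$, so the projectivization furnishes a holomorphic left $G$-action on $\PP(T_U)$.

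For part (ii), I invoke the fiberwise definition of the tautological subbundle: over a point $[v]\in \PP(T_U)$, the fiber of $\mathcal{S}_U$ is the line $\C\cdot v\subset T_{U,\pi([v])}$. Because $dg$ sends the line $\C\cdot v$ isomorphically onto $\C\cdot dg(v)$, the $G$-action on $\pi^*T_U$ preserves $\mathcal{S}_U$ setwise, and the restriction is a holomorphic bundle map. Taking duals yields the $G$-equivariant structure on $\mathcal{O}_{\PP(T_U)}(1)=\mathcal{S}_U^\vee$. When $T_U$ is replaced by a $G$-invariant subbundle $\mathcal{F}\subset T_U$---which is the only case used in Definition~\ref{def:orbifold-ds}---the same argument restricts verbatim to $\PP(\mathcal{F})$.

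For part (iii), I argue by induction on $k$, taking the hypothesis that $U_{k-1}$, $T_{U_{k-1}/\mathcal{F}_{k-1}}$, and $\mathcal{O}_{U_{k-1}}(1)$ all carry compatible holomorphic $G$-actions. Parts (i) and (ii) applied to the pair $(U_{k-1},T_{U_{k-1}/\mathcal{F}_{k-1}})$ then promote the $G$-action to $U_k=\PP(T_{U_{k-1}/\mathcal{F}_{k-1}})$ and to $\mathcal{O}_{U_k}(1)$, and the projection $\pi_k\colon U_k\to U_{k-1}$ is $G$-equivariant by construction. The relative and tautological short exact sequences are built by universal bundle-theoretic operations (kernels, quotients, pullbacks), all of which are natural in the category of holomorphic bundles with automorphisms, so the induced maps in each sequence intertwine the $G$-actions, giving exact sequences in the category of $G$-equivariant bundles.

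The only real subtlety---the step I expect to be the most delicate---is maintaining $G$-invariance of the directed structure $\mathcal{F}_k$ at each stage, so that the projectivization $\PP(T_{U_k/\mathcal{F}_k})$ is well-defined as a $G$-space. This reduces to checking that the Demailly lifting rule $\mathcal{F}_k=\{v\in T_{U_k}\mid d\pi_k(v)\in \mathcal{S}_{k-1}\}$ is invariant under $dg$, which follows since $dg$ preserves both $d\pi_k$ and $\mathcal{S}_{k-1}$ by the inductive hypothesis. Everything else is a bookkeeping verification that functorial constructions commute with a holomorphic group action, and no analytic input beyond the chain rule is needed.
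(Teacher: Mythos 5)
Your argument is correct and follows essentially the same route as the paper: use the chain rule to induce the $G$-action on $\PP(T_U)$ and its tautological bundles, then propagate up the tower by functoriality. You usefully make explicit a point the paper leaves implicit — namely that the Demailly lifting rule $\mathcal{F}_k=\{v\in T_{U_k}\mid d\pi_k(v)\in\mathcal{S}_{k-1}\}$ is $G$-invariant because $dg$ commutes with $d\pi_k$ and preserves $\mathcal{S}_{k-1}$ by the inductive hypothesis — which is exactly the step needed for the projectivization at level $k+1$ to be well-defined as a $G$-space.
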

	
	\begin{proof}
		The derivative $dg\colon T_U\to T_U$ defines a holomorphic $G$–action on each tangent fiber, inducing an action on $\PP(T_U)$ that commutes with projection.  
		For any $[\xi]\in \PP(T_{U,x})$, $g\cdot[\xi] \coloneq [dg_x(\xi)]$ is well defined, proving (i).  
		The subbundle $\mathcal{S}_U$ and its dual are preserved because $dg$ is linear and $G$–invariant, proving (ii).  
		Functoriality of projectivization ensures these properties extend to all levels $U_k$, proving (iii)  (see \cite{Demailly1997, Demailly2007, MoerdijkPronk1997}).
	\end{proof}

    The resulting $G_i$–equivariant data descend to the quotient
and glue compatibly across overlapping charts.

	\begin{lemma}[Descent to the orbifold level]
		\label{lem:ds-descent}
		For each chart $[U_i/G_i]$, the data $(U_{i,k},\OO_{U_{i,k}}(1),\pi_{i,k})$ are $G_i$–equivariant by Proposition~\ref{prop:ds-equivariance}.  
		Therefore, they descend to well-defined orbifold objects
		\[
		(\mathcal{X}_k,\OO_{\mathcal{X}_k}(1),\pi_k)
		\]
		forming a compatible tower
		\[
		\cdots \xrightarrow{\pi_{k+1}} \mathcal{X}_k \xrightarrow{\pi_k}\mathcal{X}_{k-1}\xrightarrow{\pi_{k-1}}\cdots \xrightarrow{\pi_1}\mathcal{X}_0=\mathcal{X}.
		\]
	\end{lemma}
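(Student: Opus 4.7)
The plan is to assemble the local DS data into a global orbifold tower by invoking equivariant descent on the \'etale groupoid presentation of $\X$. I would first fix an orbifold atlas $\{[U_i/G_i]\}$ as in Definition~\ref{def:orbifold} and observe that Proposition~\ref{prop:ds-equivariance} already supplies, at each level $k$, a canonical holomorphic $G_i$-action on $U_{i,k}$ together with $G_i$-equivariant structures on the tautological line bundle $\OO_{U_{i,k}}(1)$ and on the projection $\pi_{i,k}\colon U_{i,k}\to U_{i,k-1}$. Forming the stack quotient then produces a smooth analytic DM stack $[U_{i,k}/G_i]$, a line orbibundle obtained as the $G_i$-invariant descent of $\OO_{U_{i,k}}(1)$, and a representable morphism $[U_{i,k}/G_i]\to [U_{i,k-1}/G_i]$.

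The next step is to glue these local stack quotients into a global object $\X_k$. On each overlap, the orbifold change-of-charts condition of Definition~\ref{def:orbifold}(iv) provides \'etale embeddings intertwining the finite group actions. Because the construction $U\mapsto\PP(T_{U/\F})$ is functorial with respect to \'etale holomorphic maps, these transition data lift canonically, and $G$-equivariantly, to \'etale transition isomorphisms between the corresponding local DS towers at every level~$k$. Applying the local quotient presentation of Lemma~\ref{lem:local-quotient} to the resulting \'etale groupoid $R_k\rightrightarrows\bigsqcup_i U_{i,k}$ yields a well-defined smooth analytic DM stack $\X_k$; the $G_i$-invariant tautological bundles glue to an orbibundle $\OO_{\X_k}(1)$, and the equivariant projections glue to a morphism $\pi_k\colon \X_k\to \X_{k-1}$ independent of chart choices.

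Finally, the compatibility of the tower across different $k$ follows from the naturality of the relative tautological sequences in Proposition~\ref{prop:ds-equivariance}(iii), which commute with all chart transitions and with all $G_i$-actions, producing a coherent inverse system $(\X_k,\pi_k)_{k\ge 0}$ with $\X_0=\X$. The principal obstacle is not computational but 2-categorical: one must check that the descent isomorphisms obtained from overlap data satisfy the cocycle condition on triple intersections at each level of the tower. I expect this to reduce to the corresponding cocycle condition on the base atlas of $\X$, which is assumed, together with the fact that projectivization and the formation of tautological bundles are strict functors on the category of complex manifolds with \'etale morphisms; consequently, the induced transition cocycles on the DS tower inherit consistency from those of $\X$, and no new coherence data need to be constructed.
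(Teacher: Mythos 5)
Your proposal is correct and follows essentially the same route as the paper: local equivariant descent to the quotient stacks $[U_{i,k}/G_i]$, followed by gluing via the functoriality of $U\mapsto\PP(T_{U/\F})$ under étale orbifold changes of charts, with compatibility on overlaps referred back to the base atlas. Your explicit treatment of the triple-overlap cocycle condition—and the observation that it is inherited strictly from the cocycle on $\X$ because projectivization and the tautological-bundle construction are strict functors on manifolds with étale maps—makes precise a point the paper disposes of with a citation to Lerman and Moerdijk--Pronk, so the argument is if anything a bit tighter than the published one.
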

	
	\begin{proof}
		If a finite group $G_i$ acts holomorphically on a manifold $U_i$, every $G_i$–equivariant holomorphic bundle and morphism on $U_i$ descends to the quotient stack $[U_i/G_i]$.  
		Applying this to $\pi_{i,k}$ and $\OO_{U_{i,k}}(1)$ yields descent data for $\mathcal{X}_k$.  
		On overlaps, the groupoid maps between $(U_i,G_i)$ and $(U_j,G_j)$ preserve all equivariant structures, ensuring compatibility  
		(\cite{Lerman2008,MoerdijkPronk1997}).
	\end{proof}
	
	Having established that the Demailly–Semple data descend to well-defined orbifold objects, we next endow the tautological line bundles $\OO_{\mathcal{X}_k}(1)$ with compatible Hermitian metrics.
	
	\begin{proposition}[Orbifold Hermitian structure]
		\label{prop:hermitian-ds}
		Each line bundle $\mathcal{O}_{U_{i,k}}(1)$ admits a $G_i$–invariant Hermitian metric
		\[
		h_{i,k} \coloneq \frac{1}{|G_i|}\sum_{g\in G_i} g^*h'_{i,k},
		\]
		obtained by averaging any smooth metric $h'_{i,k}$.  
		These glue under descent to define an orbifold Hermitian metric $h_k$ on $\mathcal{O}_{\mathcal{X}_k}(1)$.
		The curvature form $\Theta_{h_k}(\mathcal{O}_{\mathcal{X}_k}(1))$ is a well-defined global $(1,1)$–form descending from the $G_i$–invariant local curvatures.
	\end{proposition}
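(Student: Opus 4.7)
The plan is to carry out the averaging chart by chart, descend the resulting $G_i$-invariant data via the étale groupoid structure of $\mathcal{X}_k$, and then transport the standard curvature identity through this descent.

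For the first stage, I would start from a smooth Hermitian metric $h'_{i,k}$ on the holomorphic line bundle $\mathcal{O}_{U_{i,k}}(1)$, whose existence is guaranteed by a standard partition-of-unity construction on the complex manifold $U_{i,k}$. The averaged metric $h_{i,k}$ is a finite convex combination of the pullbacks $g^*h'_{i,k}$. By \cref{prop:ds-equivariance}, every $g\in G_i$ acts holomorphically on $U_{i,k}$ and covers a bundle automorphism of $\mathcal{O}_{U_{i,k}}(1)$, so each $g^*h'_{i,k}$ is again smooth, Hermitian, and positive definite; convex combinations of such metrics preserve these three properties. The $G_i$-invariance is immediate from the reindexing $g\mapsto gg_0$ in the group sum.

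For the second stage, the $G_i$-invariance of $h_{i,k}$ combined with the $G_i$-equivariance of $\mathcal{O}_{U_{i,k}}(1)$ from \cref{lem:ds-descent} guarantees that $h_{i,k}$ descends to a well-defined Hermitian structure on the restriction of the orbibundle $\mathcal{O}_{\mathcal{X}_k}(1)$ to $[U_{i,k}/G_i]$. To produce a globally defined orbifold metric $h_k$ on $\mathcal{X}_k$, I would patch the chartwise metrics using a groupoid-invariant partition of unity $\{\rho_\alpha\}$ subordinate to the orbifold atlas of $\mathcal{X}_k$, exactly as in the proof of \cref{lem:local-chart-integration}. Because Hermiticity and positivity survive convex combinations and étale pullback, and because the partition of unity is compatible with the groupoid transition maps, this pasting yields a well-defined smooth orbifold Hermitian metric. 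For the curvature, the local expression $\Theta_{h_{i,k}}=-\partial\bar\partial\log h_{i,k}$ is a real smooth $(1,1)$-form on $U_{i,k}$. Its $G_i$-invariance follows from the $G_i$-invariance of $\log h_{i,k}$ and the naturality of $\partial\bar\partial$ under the biholomorphisms $g$. Hence $\Theta_{h_{i,k}}$ descends to $[U_{i,k}/G_i]$, and the global patching of $h_k$ furnishes a well-defined global orbifold $(1,1)$-form $\Theta_{h_k}(\mathcal{O}_{\mathcal{X}_k}(1))$.

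The principal obstacle I anticipate is not in any individual step but in the bookkeeping required to guarantee that the independently averaged chartwise metrics $h_{i,k}$ are consistent, on overlapping charts, with a single globally defined orbifold metric on $\mathcal{O}_{\mathcal{X}_k}(1)$. Averaging on each chart produces $G_i$-invariance but not intrinsic groupoid-compatibility; the cleanest resolution is either the partition-of-unity patching just described, or to extract the initial metric $h'_{i,k}$ by restriction from a single globally descended smooth Hermitian metric on $\mathcal{O}_{\mathcal{X}_k}(1)$ (which exists by partition of unity on the étale atlas) before performing a single averaging step. Once this compatibility is secured, the remainder of the argument — positivity, Hermiticity, smoothness, and curvature descent — reduces to standard finite-group equivariance together with the étale descent already established in \cref{lem:ds-descent}.
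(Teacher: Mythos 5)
Your proposal is correct and follows the same chartwise-average-then-descend route as the paper, but you are more careful at the one step the paper's own proof glosses over. The paper simply cites \cref{lem:ds-descent} to claim the averaged metrics ``glue under descent,'' yet that lemma concerns descent of \emph{holomorphic} bundle data, for which the descended object is uniquely determined; a Hermitian metric is merely smooth, and independently averaged chartwise metrics $h_{i,k}$ have no reason to agree on overlaps. Your observation that a further patching is required is exactly right, and both of your proposed fixes are standard and sound: (a) glue the averaged chartwise metrics by a groupoid-invariant (equivalently, coarse-space) partition of unity, keeping positivity and Hermiticity because they survive convex combination; or (b) begin with a single globally defined metric on the orbibundle $\mathcal{O}_{\mathcal{X}_k}(1)$, whose chartwise pullbacks are then automatically $G_i$-invariant so that the averaging step becomes vacuous. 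Once global consistency is secured by either route, your curvature argument---$G_i$-invariance of $\log h_{i,k}$, naturality of $\partial\bar\partial$ under the biholomorphisms $g$, and descent of the resulting $(1,1)$-form---matches the paper's and is complete.
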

	
	\begin{proof}
		Finite averaging preserves smoothness and Hermitian positivity.  
		Since the $G_i$–action commutes with $\bar\partial$ and $d$, the curvature form of the Chern connection is $G_i$–invariant.  
		By Lemma~\ref{lem:ds-descent}, these local invariant curvatures glue to a global orbifold $(1,1)$–form.  
		See \cite{Demailly2007,CampanaPaun2016}.
	\end{proof}
	
	\begin{corollary}[Positivity on the orbifold tower]
		\label{cor:ds-positivity}
		Positivity or negativity of the curvature of $\mathcal{O}_{\mathcal{X}_k}(1)$ is a local property checked on the charts $U_{i,k}$.  
		Consequently, all curvature or jet-positivity arguments for the classical Demailly–Semple tower extend verbatim to the orbifold setting.
	\end{corollary}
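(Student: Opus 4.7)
The plan is to reduce both assertions to the chartwise nature of the orbifold Demailly--Semple data established above. For the first claim, the Chern curvature $\Theta_{h_k}(\OO_{\X_k}(1))$ produced in Proposition~\ref{prop:hermitian-ds} is by construction the descent of the $G_i$--invariant curvature of $\OO_{U_{i,k}}(1)$ along the étale atlas $U_{i,k}\to\X_k$. Since positivity or negativity of a smooth $(1,1)$--form is a pointwise condition on the signature of the associated Hermitian form, and since étale maps are local biholomorphisms, the sign condition at $[x]\in\X_k$ is equivalent to the same sign condition at any preimage $x\in U_{i,k}$. Hence positivity (or semi/negative variants) on $\X_k$ is detected and computed chartwise, which is precisely the content of the first assertion.

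For the second claim, I would observe that the entire classical Demailly--Semple positivity package---Chern curvature formulas on projectivized bundles, the induction along the tower producing fiberwise strict positivity of $\OO_{\X_1}(1)$ from vertical negativity of $\ev_0^*A^{-1}$, Bochner-type identities, and semipositivity propagation in horizontal directions---is derived from local computations involving the tangent and tautological exact sequences recorded in Proposition~\ref{prop:ds-equivariance}. Each of these sequences and each associated Hermitian datum is $G_i$--equivariant on $U_{i,k}$, and the $G_i$--averaging of Proposition~\ref{prop:hermitian-ds} is a convex operation on positive Hermitian forms, so it preserves both the Chern curvature identities and the attendant curvature inequalities. Consequently every step of the classical argument applies verbatim on $U_{i,k}$, and its conclusion descends to $\X_k$ via Lemma~\ref{lem:ds-descent}.

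The main subtlety---and essentially the only potential obstacle---is verifying that curvature inequalities expressed via directional Hermitian norms remain valid after $G_i$--averaging of the metric, and that nontrivial isotropy does not spoil the descent of the Chern curvature form. However, averaging preserves positive-definiteness and commutes with $\partial\bar\partial$, and the chart maps $U_{i,k}\to\X_k$ are étale, so the orbifold singular locus plays no role in the local Chern formulas. Thus no further analytic input beyond Propositions~\ref{prop:ds-equivariance} and~\ref{prop:hermitian-ds} and Lemma~\ref{lem:ds-descent} is required, and the corollary follows.
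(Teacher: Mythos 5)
Your proposal is correct and follows the same route as the paper: reduce curvature sign to the $G_i$--invariant local representatives via finite averaging (Proposition~\ref{prop:hermitian-ds}) and étale descent (Lemma~\ref{lem:ds-descent}), and note that the classical Demailly--Semple curvature arguments are local and $G_i$--equivariant. You merely spell out more explicitly the two facts the paper leaves implicit, namely that positivity of a $(1,1)$--form is a pointwise condition preserved under local biholomorphism, and that $G_i$--averaging is a convex operation on positive Hermitian forms; both are welcome elaborations but not a genuinely different argument.
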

	
	\begin{proof}
		Curvature positivity is invariant under finite averaging and $G_i$–equivariant descent.  
		Hence the sign of $\Theta_{h_k}(\mathcal{O}_{\mathcal{X}_k}(1))$ is determined by its local $G_i$–invariant representatives.  
		The analytic proofs of hyperbolicity via jet differentials remain valid.  
		See \cite{Demailly1997,Demailly2007,CampanaPaun2016}.
	\end{proof}

	\subsection{Asymptotic packages for positivity and vanishing}
	\label{subsec:positivity-vanishing}
	
	This subsection reformulates the asymptotic orbifold Riemann--Roch expansion in terms of 
	positivity and vanishing phenomena on orbifold Demailly--Semple towers. 
	It connects the chartwise asymptotics of Proposition~\ref{prop:asymp-and-s}
	with curvature--based positivity criteria à la Demailly 
	and the vanishing theorems of Kodaira--Kawamata--Viehweg type, 
	showing that all asymptotic signs and slopes are governed by the identity sector.
	
	\begin{proposition}[Sectorwise asymptotics and slope control]
		\label{prop:slope-control}
		Let $\mathcal{X}$ be a connected compact complex orbifold of complex dimension $n$, 
		and let $L=\pi^*A$ be the pullback of an ample line bundle on the coarse space $Y$.
		Then
		\[
		\chi(\mathcal{X},E\otimes L^{\otimes m})
		= \frac{1}{s}\int_Y \ch(E)\,e^{m\,c_1(A)}\,\Td(TY)
		+ O(m^{n-1}),
		\qquad s=|\Stab_{\mathrm{gen}}|.
		\]
		Consequently, the sign of the asymptotic slope
		\[
		\mu(L) \coloneq \lim_{m\to\infty}\frac{\chi(\mathcal{X},E\otimes L^{\otimes m})}{m^n/n!} =\frac{1}{s}\int_Y \rk(E)\,c_1(A)^n
		\]
		is identical to that of the coarse manifold term.  
		Twisted--sector corrections are of strictly lower order 
		and therefore cannot alter the positivity or negativity of $\mu(L)$. 
	\end{proposition}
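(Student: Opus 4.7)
The plan is to deduce the proposition by repackaging Proposition~\ref{prop:asymp-and-s} in asymptotic-slope form, using only the degree bookkeeping already established in this subsection. I would begin by applying the chartwise Kawasaki formula (Theorem~\ref{thm:kawasaki-chartwise}) to $E\otimes L^{\otimes m}$, splitting it into the identity sector ($g=1$) and the twisted sectors ($g\neq 1$). Because $L=\pi^*A$ is pulled back from $Y$, its restriction to every fixed locus $U_i^g$ is pulled back from the coarse space, so the exponential factor $e^{m\,c_1(L)}$ has no sector-dependent twisting and contributes powers of $m$ coming only from positive powers of $c_1(A)$.

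For the identity sector, I would invoke Proposition~\ref{prop:generic-stab-normalization}(ii) to rewrite the integral over identity strata of $\mathcal{X}$ as $\tfrac{1}{s}\int_Y(\cdot)$, thereby recognizing the standard manifold HRR integrand $\ch(E)\,e^{m\,c_1(A)}\,\Td(TY)$. Extracting its top-degree component yields a polynomial in $m$ of degree $n$ whose leading coefficient is
\[
\frac{1}{s\,n!}\int_Y \rk(E)\,c_1(A)^n.
\]
For the twisted sectors I would combine Lemma~\ref{lem:twisted-degree-bound}, which gives $\dim_\C U_i^g\le n-1$, with Lemma~\ref{lem:denominator-factorization}, which shows that the Kawasaki denominator has nonzero constant term and is independent of $m$. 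Together these force each twisted contribution to be a polynomial in $m$ of degree at most $n-1$, giving the displayed asymptotic expansion.

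For the slope statement I would divide by $m^n/n!$ and let $m\to\infty$: only the leading identity-sector term survives, producing $\mu(L)=\tfrac{1}{s}\int_Y \rk(E)\,c_1(A)^n$, and the $O(m^{n-1})$ remainder cannot affect either the value or its sign. Consequently $\mu(L)$ has exactly the same sign as the coarse intersection number $\int_Y \rk(E)\,c_1(A)^n$, including the possibility of vanishing.

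The main conceptual obstacle—already handled upstream—would be ruling out the possibility that the Kawasaki denominators $\det(1-g^{-1}e^{-c_1(N)})$ secretly introduce additional powers of $m$ through Chern classes of the normal bundles; since Lemma~\ref{lem:denominator-factorization} confirms their $m$-independence and nonvanishing constant term for $g\neq 1$, the degree accounting reduces to pure fixed-locus dimension counting, and the slope assertion follows without further analytic input.
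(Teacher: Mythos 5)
Your proposal is correct and follows essentially the same route as the paper: identity sector via Kawasaki/chartwise HRR plus the $1/s$ normalization from Proposition~\ref{prop:generic-stab-normalization}, twisted sectors bounded by Lemma~\ref{lem:twisted-degree-bound} and Lemma~\ref{lem:denominator-factorization}, then divide by $m^n/n!$. The paper reaches the same conclusion a bit more compactly by citing Proposition~\ref{prop:asymp-and-s} directly (which already packages the Kawasaki decomposition and degree bookkeeping you re-derive), but there is no substantive difference in the argument.
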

	
	\begin{proof}
		By Proposition~\ref{prop:asymp-and-s}, the leading coefficient of $\chi$ comes exclusively from the identity sector.
		For $g\neq 1$, Lemma~\ref{lem:twisted-degree-bound} and 
		Lemma~\ref{lem:denominator-factorization} imply that the fixed locus has $\dim_{\C}U_i^g\le n-1$ 
		and that its contribution does not depend on $m$ through the denominator term. 
		Hence all twisted terms contribute at most $O(m^{n-1})$. 
		The identity term, however, reproduces the manifold integral 
		$\int_Y \ch(E)e^{m c_1(A)}\Td(TY)$ scaled by $1/s$.
		The dominant coefficient is thus $\frac{1}{s}\frac{1}{n!}\rk(E)(c_1(A)^n)$, 
		whose sign coincides with the positivity (or negativity) of $c_1(A)$ on $Y$.
	\end{proof}
	
	\begin{lemma}[Untwisted control of cohomology dimensions]
		\label{lem:cohomology-control}
		Assume $(L,h)$ is a Hermitian line bundle with Nakano--positive curvature
		and that $L=\pi^*A$ descends from the coarse space.
		Then for all sufficiently large $m$,
		\[
		H^q(\mathcal{X},E\otimes L^{\otimes m})=0\quad\text{for all }q>0,
		\]
		and consequently
		\[
		h^0(\mathcal{X},E\otimes L^{\otimes m})
		=\chi(\mathcal{X},E\otimes L^{\otimes m})
		=\frac{1}{s}\int_Y \ch(E)\,e^{m\,c_1(A)}\,\Td(TY)
		+O(m^{n-1}).
		\]
		Thus the asymptotic growth of global sections is governed entirely by the untwisted (identity) sector.
	\end{lemma}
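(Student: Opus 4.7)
The plan has two steps: first, establish the vanishing $H^q(\mathcal{X},E\otimes L^{\otimes m})=0$ for $q>0$ and $m\gg 0$; second, collapse $h^0$ onto $\chi$ and invoke Proposition~\ref{prop:asymp-and-s} for the asymptotic formula.

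For the vanishing, I would exploit the coarse moduli map $\pi\colon\mathcal{X}\to Y$. Because $\mathcal{X}$ has finite stabilizers and is proper (Proposition~\ref{prop:compactness-coarse}), $\pi$ is proper and quasi-finite, so $R^q\pi_*=0$ on coherent sheaves for $q>0$. Combined with the projection formula applied to $L=\pi^*A$, the Leray spectral sequence degenerates to
\[
H^q(\mathcal{X},E\otimes L^{\otimes m})\;\simeq\;H^q\bigl(Y,(\pi_*E)\otimes A^{\otimes m}\bigr).
\]
Since $\pi_*E$ is coherent on the compact analytic space $Y$ and $A$ is ample with a smooth positively curved metric, Cartan--Serre vanishing will produce an integer $m_0$, depending only on $\pi_*E$ and $A$, such that the right-hand side vanishes for all $q>0$ and $m\ge m_0$.

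In parallel I would also record an analytic derivation, since the DS bundles appearing later demand the curvature viewpoint. Chartwise on $[U_i/G_i]$, Nakano positivity of $(L,h)$ together with any $G_i$-invariant Hermitian structure on $E$ makes the curvature term in the Bochner--Kodaira--Nakano identity on $(n,q)$-forms valued in $E\otimes L^{\otimes m}$ dominate the torsion contributions for $m$ large; $G_i$-equivariant Hodge decomposition will then identify $H^q([U_i/G_i],E\otimes L^{\otimes m})$ with the space of $G_i$-invariant harmonic representatives, which vanishes. Descent across the finite orbifold atlas, as in Lemma~\ref{lem:ds-descent}, glues these local vanishings into the global statement.

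Once vanishing is in hand, the equality $h^0=\chi$ is automatic, and Proposition~\ref{prop:asymp-and-s} supplies the displayed asymptotic formula with all twisted sectors absorbed into $O(m^{n-1})$. The main obstacle I anticipate lies in the Bochner route: one must uniformly absorb the torsion and curvature terms contributed by $E$ and $T\mathcal{X}$ across all charts, requiring a single $m_0$ that works simultaneously for the entire finite atlas---a uniformity ultimately guaranteed by compactness of $\mathcal{X}$ and finiteness of the groupoid presentation. The Leray--Serre argument circumvents this subtlety entirely and is therefore the preferred primary route, while the chartwise Bochner proof serves as a template for the DS-tower vanishing theorems of the next section, where no coarse-space factorization will be available.
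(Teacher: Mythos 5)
Your proposal is correct, and your primary (Leray) route is a genuine variant of, and arguably tighter than, the paper's argument. The paper proves vanishing by applying Hörmander--Demailly $L^2$ estimates directly on $Y$ and then asserting that ``pullback preserves these vanishings'' on $\X$; this passage from $\X$ to $Y$ is left informal. Your route makes it precise: for a coarse moduli map of a tame stack over $\C$, the pushforward $\pi_*$ is exact ($R^q\pi_*=0$ for $q>0$, since locally $\pi_*$ is $G$-invariants of a finite pushforward and $|G|$ is invertible), so the projection formula with $L=\pi^*A$ and Leray degeneration give $H^q(\X,E\otimes L^{\otimes m})\simeq H^q(Y,\pi_*E\otimes A^{\otimes m})$, after which Cartan--Serre vanishing on the compact coarse space finishes the job. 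Swapping Serre vanishing for the paper's $L^2$ estimate on $Y$ is cosmetic (both rest on ampleness of $A$, and the Nakano-positivity hypothesis on $(L,h)$ is then not strictly needed for this lemma), but the exactness/Leray step is a real improvement in rigor over ``pullback preserves vanishing.'' Your secondary chartwise Bochner argument is closer to the paper's analytic spirit and, as you rightly anticipate, is the template that must carry the weight in Lemma~\ref{lem:eq-vanishing} on the Demailly--Semple tower, where no $R^q\pi_*=0$ shortcut is available. The final appeal to Proposition~\ref{prop:asymp-and-s} matches the paper, which cites its paraphrase Proposition~\ref{prop:slope-control}.
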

	
	\begin{proof}
		By the standard Hörmander--Demailly $L^2$ estimates, Nakano--positive curvature of $(L,h)$ implies the vanishing $H^q(Y,E\otimes A^{\otimes m})=0$ for $q>0$ when $m\gg0$. 
		Since $L=\pi^*A$ and $E$ are $G_i$--equivariant,  pullback preserves these vanishings on the orbifold $\mathcal{X}$. 
		Therefore $H^q(\mathcal{X},E\otimes L^{\otimes m})=0$ for $q>0$ as well.
		The Euler characteristic then reduces to $h^0$, and Proposition~\ref{prop:slope-control} yields the desired expansion.
	\end{proof}
	
	\section{Green--Griffiths--Demailly Degeneracy on Analytic DM Stacks}
	\label{sec:stack-ggd}
	
	\subsection{Notation and standing conventions}
	\label{subsec:stack-ggd-notation}
	
	We retain the setup and identifications from \S\ref{sec:orbifold-hrr-ds}.
	Only the additional working conventions specific to this section are recorded.
	
	\begin{itemize}[leftmargin=1.6em]
		\item The orbifold $\X$ is connected, compact, and of complex dimension $n$, with coarse space $\pi\colon \X\to Y$ as fixed earlier; $Y$ is normal and Kähler.  The generic stabilizer order on the dense locus $\X^\circ$ is denoted by $s=|\Stab_{\mathrm{gen}}|$.
		
		\item Fix once and for all an ample line bundle $A$ on $Y$, and set
		\[
		L\coloneq\pi^*A,
		\]
		so that all local stabilizers act trivially on the fibers of $L$.  When needed, a smooth Hermitian metric on $A$ is pulled back to $L$ and used as background data.
		
		\item All differential, cohomological, and RR computations are performed \emph{chartwise} on finite quotients $[U_i/G_i]$ and glued by equivariance.  Integrals are normalized by
		\[
		\int_{[U_i/G_i]}\omega \;=\; \frac{1}{|G_i|}\int_{U_i}\omega
		\qquad\text{for $G_i$–invariant top forms $\omega$,}
		\]
		ensuring compatibility with the chartwise Kawasaki formula (see \S\ref{subsec:chartwise-hrr}).
		
		\item Jet, vertical, and Demailly–Semple (DS) objects are constructed on charts and descend by $G_i$–equivariance.  
In particular, the holomorphic $k$–jet functor preserves quotient charts: if $\X$ is covered by $\{[U_i/G_i]\}$, then $J^k\X$ is covered by $\{[J^k U_i / G_i]\}$.  
No additional definitions are introduced here beyond this descent convention.
	\end{itemize}

	\subsection{Atlases, groupoids, and jets}
	\label{subsec:stack-ggd-atlas-jets}
We recall that all constructions on $\X$ are performed chartwise on finite quotients $[U_i/G_i]$, following the standard analytic–orbifold formalism  (\cite{MoerdijkPronk1997,Lerman2008,AdemLeidaRuan2007}).  
The following lemma records that the holomorphic jet functor respects this local structure,  so that $k$–jets are defined consistently on the orbifold.

	\begin{lemma}[Jets preserve orbifold charts and quotient structure]
		\label{lem:jet-preserve-orbifold}
		Let $\mathcal X$ be a compact complex orbifold presented by finite quotient charts $\{[U_i/G_i]\}_{i\in I}$ as above.
		For every integer $k\ge 0\colon $
		\begin{enumerate}[label=(\roman*)]
			\item The holomorphic $k$--jet functor sends each chart to a chart
			\[
			J^k[U_i/G_i]\;\simeq\;[\,J^k U_i \big/ G_i\,],
			\]
			where $G_i$ acts on $J^k U_i$ via the prolonged action $g\cdot j^k_x\gamma \coloneq j^k_{g\cdot x}(g\circ\gamma)$.
			\item If $\phi_{ij}\colon (\widetilde U_{ij},H_{ij})\hookrightarrow (U_j,G_j)$ is an orbifold change of charts covering an embedding $\widetilde U_{ij}/H_{ij}\hookrightarrow U_j/G_j$, then
			\[
			J^k\phi_{ij}\colon \bigl(J^k\widetilde U_{ij},H_{ij}\bigr)\hookrightarrow \bigl(J^k U_j,G_j\bigr)
			\]
			is again an orbifold change of charts. Consequently, $\{[J^k U_i/G_i]\}_{i\in I}$ defines an orbifold atlas for the $k$--jet orbifold $J^k\mathcal X$.
		\end{enumerate}
	\end{lemma}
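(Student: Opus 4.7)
The plan is to derive both claims from the functoriality of the holomorphic $k$-jet construction together with its compatibility with finite group actions and with étale maps. Since $J^k$ is a functor on the category of complex manifolds with local biholomorphisms, I expect both (i) and (ii) to reduce to transporting the relevant étale-groupoid data along $J^k$, with the verification proceeding chartwise.

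For (i), I first check that the prescribed prolonged action is well defined: for $g\in G_i$ and a germ $\gamma\colon(\C,0)\to(U_i,x)$, the germ $g\circ\gamma$ is holomorphic and, by the chain rule, its class depends only on the order-$k$ Taylor data of $\gamma$ at $0$. This defines a holomorphic $G_i$-action on $J^kU_i$ whose group axioms are inherited from $G_i$ acting on $U_i$. I then present $[U_i/G_i]$ by its étale translation groupoid $G_i\times U_i\rightrightarrows U_i$; applying $J^k$ object- and morphism-wise yields the étale groupoid $G_i\times J^kU_i\rightrightarrows J^kU_i$, which by construction presents $[J^kU_i/G_i]$. The identification $J^k[U_i/G_i]\simeq [J^kU_i/G_i]$ then follows from the universal property of the jet functor applied to the quotient presentation, since both stacks classify $G_i$-equivariant $k$-jets of holomorphic curves into $U_i$.

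For (ii), I apply $J^k$ to the étale embedding $\phi_{ij}$. Because $\phi_{ij}$ is a local biholomorphism and $J^k$ preserves local biholomorphisms, the induced map $J^k\phi_{ij}\colon J^k\widetilde U_{ij}\to J^kU_j$ is again an open étale embedding. The compatibility of $\phi_{ij}$ with the injection $H_{ij}\hookrightarrow G_j$ is preserved under prolongation: $J^k\phi_{ij}(h\cdot\xi)=h\cdot J^k\phi_{ij}(\xi)$ for all $\xi\in J^k\widetilde U_{ij}$, since composition of germs is functorial and commutes with the prolonged action. Triple-overlap cocycle identities transport similarly, so the collection $\{[J^kU_i/G_i]\}_{i\in I}$ together with the change-of-chart data $\{J^k\phi_{ij}\}$ satisfies all axioms of Definition~\ref{def:orbifold} over the coarse space of $J^k\mathcal X$, and therefore presents $J^k\mathcal X$ as an orbifold.

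The main obstacle I anticipate is not conceptual but bookkeeping: verifying that $J^k$ preserves the Morita-equivalence class of the presenting groupoid, i.e.\ that $J^k$ commutes with fiber products of étale maps over $\mathcal X$. I plan to handle this by working in the analytic étale site and using the trivial fibration $J^kU\simeq U\times F_k$ (with $F_k$ the fiber of the $k$-jet bundle) available in any biholomorphic chart: fiber products of étale maps localize in the $U$-coordinate, while the $F_k$-factor is determined pointwise by the chain rule and is transported isomorphically. With this commutation in hand, both (i) and (ii) glue consistently, and the chartwise statements assemble into the global atlas claim.
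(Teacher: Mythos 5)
Your proposal is correct and follows essentially the same route as the paper's proof: both arguments rest on the functoriality of $J^k$, the well-definedness of the prolonged $G_i$-action via the chain rule, preservation of \'etale maps and equivariance under $J^k$, and transport of the cocycle/overlap conditions to the jet level. The only minor additions in your version—presenting the translation groupoid $G_i\times U_i\rightrightarrows U_i$ explicitly and using the local trivialization $J^kU\simeq U\times F_k$ to handle fiber-product compatibility—are consistent with, and essentially unpack, the appeal the paper makes to functoriality and to the companion groupoid Lemma~\ref{lem:jet-preserve-groupoid-min}.
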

	
	\begin{proof}
		(i) For a finite group $G_i$ acting holomorphically on $U_i$, the prolonged action on $J^k U_i$ is well defined by functoriality of $J^k(-)$: for any holomorphic germ $\gamma\colon(\C,0)\to U_i$ at $x$, the element $g\in G_i$ sends $j^k_x\gamma$ to $j^k_{g\cdot x}(g\circ\gamma)$. 
		This defines a holomorphic $G_i$--action on $J^k U_i$. The quotient stack $[J^k U_i/G_i]$ is, by construction, the $k$--jet of the quotient $[U_i/G_i]$, see \cite{Lerman2008}. Hence $J^k[U_i/G_i]\simeq [J^k U_i/G_i]$.
		
		(ii) An orbifold change of charts is given by an equivariant holomorphic embedding $\phi_{ij}\colon \widetilde U_{ij}\to U_j$ together with an injective group homomorphism $H_{ij}\hookrightarrow G_j$ such that $\phi_{ij}(h\cdot u)=\iota(h)\cdot \phi_{ij}(u)$. 
		Applying $J^k(-)$ and using that jets preserve finite fiber products and \'etale maps, we obtain an $H_{ij}$--equivariant holomorphic embedding
		\[
		J^k\phi_{ij}\colon J^k\widetilde U_{ij}\hookrightarrow J^k U_j,
		\]
		compatible with the group monomorphism $H_{ij}\hookrightarrow G_j$ and hence defining an orbifold change of charts at the jet level. 
		Atlas compatibility (cocycle and overlap conditions) is preserved by functoriality of $J^k(-)$, so the family $\{[J^k U_i/G_i]\}_{i\in I}$ glues to an orbifold structure on $J^k\mathcal X$.
	\end{proof}
	
	\begin{proposition}[Orbifold presentation of the jet orbifold]
		\label{prop:jet-orbifold-presentation}
		Let $\mathcal X$ be as above and choose any proper \'etale analytic groupoid presentation $[R\rightrightarrows U]\simeq\mathcal X$. 
		Then for every $k\ge 0$ the prolonged data
		\[
		J^kR \rightrightarrows J^kU
		\]
		form a proper \'etale analytic groupoid presenting the $k$--jet orbifold:
		\[
		J^k\mathcal X \;\simeq\; [\,J^kR \rightrightarrows J^kU\,].
		\]
		In particular, any orbifold atlas by finite quotients $\{[U_i/G_i]\}$ induces a jet atlas $\{[J^k U_i/G_i]\}$ presenting the same object.
	\end{proposition}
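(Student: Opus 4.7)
The plan is to apply the $k$-jet functor $J^k$ entrywise to the structure maps of the proper étale analytic groupoid $[R\rightrightarrows U]$, verify that the resulting data $[J^kR\rightrightarrows J^kU]$ again forms a proper étale analytic groupoid, and finally identify its quotient stack with $J^k\mathcal X$ using the local model established in Lemma~\ref{lem:jet-preserve-orbifold}.

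First, I would record the groupoid data: source and target $s,t\colon R\to U$, composition $m\colon R\times_{s,U,t}R\to R$, unit $e\colon U\to R$, and inversion $i\colon R\to R$, satisfying the usual simplicial identities. Since $J^k(-)$ is a holomorphic functor on the category of complex manifolds and holomorphic maps, applying it produces holomorphic maps $J^ks$, $J^kt$, $J^km$, $J^ke$, $J^ki$. The only delicate functoriality point is the compatibility with fiber products, namely
\[
J^k(R\times_{s,U,t}R)\;\simeq\;J^kR\times_{J^ks,\,J^kU,\,J^kt}J^kR,
\]
which holds because both $s$ and $t$ are étale and $J^k$ preserves fiber products along étale maps (jets pull back canonically under local biholomorphisms). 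Granting this, the associativity, unit, and inverse axioms are inherited directly from those of $[R\rightrightarrows U]$ by functoriality.

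Next, I would check that the resulting groupoid is still proper and étale. Étaleness of $J^ks$ and $J^kt$ follows from the fact that an étale map $f\colon R\to U$ induces an isomorphism of $k$-jets at matching points, so $J^kf$ is again a local biholomorphism. For properness, one uses $J^k(U\times U)\simeq J^kU\times J^kU$ together with properness of $(s,t)\colon R\to U\times U$; the induced map $(J^ks,J^kt)\colon J^kR\to J^kU\times J^kU$ factors through $J^k(U\times U)$, and properness is preserved because $J^kR\to R$ is a holomorphic affine bundle (hence the jet projections are proper on fibers, and properness is local on the target). This yields a proper étale analytic groupoid $[J^kR\rightrightarrows J^kU]$.

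Finally, to identify $[J^kR\rightrightarrows J^kU]$ with $J^k\mathcal X$, I would pass to a refinement by finite quotient charts $\{[U_i/G_i]\}$. Over each chart, Lemma~\ref{lem:jet-preserve-orbifold}(i) gives $J^k[U_i/G_i]\simeq[J^kU_i/G_i]$, and Lemma~\ref{lem:jet-preserve-orbifold}(ii) supplies the orbifold change-of-chart compatibility. Because any two étale presentations of $\mathcal X$ are related by an equivariant étale refinement (Morita equivalence), and $J^k$ preserves equivariant étale refinements by the previous paragraph, the induced groupoids $J^kR\rightrightarrows J^kU$ and $\bigsqcup_i J^kU_i\rightrightarrows\cdots$ are Morita equivalent; both therefore present the same quotient stack, namely $J^k\mathcal X$.

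The step I expect to be the main obstacle is the Morita-invariance of the jet construction at the 2-categorical level: one must ensure that $J^k$ sends Morita-equivalent proper étale analytic groupoids to Morita-equivalent ones, not merely that it behaves well on a single fixed presentation. This reduces to verifying that $J^k$ preserves equivariant étale hypercovers and the resulting cocycle conditions, which is formal once the fiber-product compatibility $J^k(R\times_UR)\simeq J^kR\times_{J^kU}J^kR$ for étale $s,t$ has been pinned down. With that in hand, the remaining axiom checks and the identification $J^k\mathcal X\simeq[J^kR\rightrightarrows J^kU]$ follow from naturality of $J^k$ combined with the local model of Lemma~\ref{lem:jet-preserve-orbifold}.
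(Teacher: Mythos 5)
Your proposal follows essentially the same route as the paper, which proves this proposition by deferring to Lemma~\ref{lem:jet-preserve-groupoid-min}: apply $J^k$ to the structure maps, use compatibility with fiber products, verify that étaleness and properness of $(s,t)$ are preserved, and identify the resulting groupoid with $J^k\mathcal X$ via the chartwise model of Lemma~\ref{lem:jet-preserve-orbifold} and Morita invariance. Your closing observation that Morita invariance is the key \(2\)-categorical point is also how the paper handles the passage between presentations.

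One step as written would not survive scrutiny: the claim that ``properness is preserved because $J^kR\to R$ is a holomorphic affine bundle (hence the jet projections are proper on fibers).'' An affine bundle of positive rank has non-compact fibers and its projection is never proper, so this cannot be the reason. The correct mechanism uses the étale structure: since $s$ is étale, $(\pi,J^ks)$ identifies $J^kR$ with $R\times_U J^kU$, and the further constraint that $J^kt$ agree with a prescribed jet is a closed condition, so $(\pi,J^ks,J^kt)$ realizes $J^kR$ as a closed analytic subspace of $R\times_{U\times U}(J^kU\times J^kU)$. The latter is proper over $J^kU\times J^kU$ by base change from $(s,t)\colon R\to U\times U$, whence $(J^ks,J^kt)$ is proper with finite fibers. (Equivalently, in the local finite-quotient model $R|_V\simeq V\times G$, one sees $(J^ks,J^kt)$ is a finite disjoint union of closed graphs.) This is a cosmetic fix --- the paper's own Step~2(b) in Lemma~\ref{lem:jet-preserve-groupoid-min} is similarly loose about ``$k$-jets along the compact fiber'' --- but it is worth making precise, since properness is exactly the input that keeps the prolonged groupoid proper étale.
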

	
	\begin{proof}
		This is the orbifold (chartwise) restatement of the groupoid result proved in Lemma~\ref{lem:jet-preserve-groupoid-min}. 
		Properness and \'etaleness are preserved by $J^k(-)$; the groupoid axioms transport under functoriality; and Morita equivalences are respected (see \cite{Lerman2008}). 
		Passing from a global groupoid presentation to a finite quotient atlas and back uses standard equivalence between orbifolds and proper \'etale groupoids, e.g.\ \cite{MoerdijkPronk1997,Lerman2008}.
	\end{proof}
	
	\begin{corollary}[Descent of Demailly--Semple data on orbifolds]
		\label{cor:ds-descent-orbifold}
		Let $(\mathcal X,\mathcal F\subset T\mathcal X)$ be a complex orbifold with a holomorphic directed structure. For each chart $[U_i/G_i]$ form the classical Demailly--Semple data $(U_{i,k},\OO_{U_{i,k}}(1),\pi_{i,k})$ on $U_i$. 
		The prolonged $G_i$--actions are holomorphic and preserve all DS objects; hence these data descend to the quotients and glue to global orbifold objects
		\[
		\bigl(\mathcal X_k,\ \OO_{\mathcal X_k}(1),\ \pi_k\colon \mathcal X_k\to \mathcal X_{k-1}\bigr)_{k\ge 1}.
		\]
		Moreover, Chern classes, curvature forms, and characteristic classes computed chartwise agree with the global orbifold classes.
	\end{corollary}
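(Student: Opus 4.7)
The plan is to deduce the corollary by assembling, on each chart of a fixed atlas $\{[U_i/G_i]\}$ of $\X$, the equivariant Demailly--Semple data provided by Proposition~\ref{prop:ds-equivariance}, descending these to quotient stacks via Lemma~\ref{lem:ds-descent}, and then upgrading the local pieces to global objects by checking the cocycle condition on triple overlaps. The jet--functoriality of Lemma~\ref{lem:jet-preserve-orbifold}(ii) and the groupoid presentation of Proposition~\ref{prop:jet-orbifold-presentation} are the main tools: they guarantee that every operation we perform on $U_i$ transports to $J^k U_i$ and to $U_{i,k}$ in a way compatible with change of charts, so that the DS tower behaves on $\X$ exactly as on a manifold once we remember the equivariance data.

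First, I would fix the directed datum $\F|_{U_i}\subset TU_i$ and iteratively form the classical DS tower $(U_{i,k},\pi_{i,k},\OO_{U_{i,k}}(1))$ chart by chart. By Proposition~\ref{prop:ds-equivariance} the prolonged holomorphic $G_i$--action on $U_{i,k}$ preserves the tautological subbundle $\mathcal S_{i,k}$, the quotient line bundle $\OO_{U_{i,k}}(1)$, the projection $\pi_{i,k}$, and the relative/vertical tangent sequences. Lemma~\ref{lem:ds-descent} then gives, for each $k$, the local orbifold pieces $[U_{i,k}/G_i]$ together with the descended $\OO_{\X_k}(1)|_{[U_{i,k}/G_i]}$ and $\pi_k|_{[U_{i,k}/G_i]}$. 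For the gluing, each orbifold change of charts $\phi_{ij}\colon(\widetilde U_{ij},H_{ij})\hookrightarrow(U_j,G_j)$ prolongs, by functoriality of $J^k(-)$ and of projectivization of vertical tangent bundles, to an $H_{ij}$--equivariant embedding $\phi_{ij,k}\colon \widetilde U_{ij,k}\hookrightarrow U_{j,k}$ compatible with $H_{ij}\hookrightarrow G_j$; triple overlaps and the cocycle identity on $\X$ transport through $J^k(-)$ to the same identities on $\X_k$, so the local data really glue to global $(\X_k,\OO_{\X_k}(1),\pi_k)$.

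For the comparison of characteristic classes I would use the invariant Hermitian package of Proposition~\ref{prop:hermitian-ds}: averaging a smooth Hermitian metric on $\OO_{U_{i,k}}(1)$ over $G_i$ yields a $G_i$--invariant metric $h_{i,k}$, whose Chern curvature $\Theta_{h_{i,k}}$ is a $G_i$--invariant $(1,1)$--form on $U_{i,k}$. Chern--Weil then produces a well--defined orbifold de~Rham representative of $c_1(\OO_{\X_k}(1))$; equivariance ensures agreement on overlaps, and the same procedure applied to the relative and vertical tangent bundles reconstructs all higher Chern classes, hence (through the splitting principle) the full Chern character and Todd expansions used in the chartwise Kawasaki integral of Theorem~\ref{thm:kawasaki-chartwise}.

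The principal obstacle is the coherence step: one must check that the local equivariant data assemble into genuine descent data on the proper étale groupoid $[R\rightrightarrows U]$, i.e., that higher overlaps and the $2$--morphism data (choices of group monomorphisms realizing each change of charts) remain compatible after applying $J^k(-)$ and the projectivization $\PP(T_{(-)}/\F_{(-)})$. By Proposition~\ref{prop:jet-orbifold-presentation} this reduces to the fact that both $J^k(-)$ and vertical projectivization are compatible with Morita equivalence of proper étale analytic groupoids, but the verification requires careful bookkeeping of the $2$--categorical structure; once this is in place, the remainder of the corollary is a formal consequence of functoriality and finite--group averaging.
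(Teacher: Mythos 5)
Your proof follows the paper's own argument essentially verbatim: it rests on equivariance of the tangent functor, projectivization, and tautological sequences (Proposition~\ref{prop:ds-equivariance}), descent via Lemma~\ref{lem:ds-descent}, overlap compatibility via Lemma~\ref{lem:jet-preserve-orbifold} and Proposition~\ref{prop:jet-orbifold-presentation}, and finite averaging of Hermitian metrics for the characteristic-class comparison (Proposition~\ref{prop:hermitian-ds}). Your treatment is somewhat more explicit about cocycle coherence on overlaps, but this is a fleshing-out of the same route rather than a different one.
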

	
	\begin{proof}
		Equivariance of the tangent functor, projectivization, tautological subbundle, and their exact sequences is preserved by group actions; finite averaging yields invariant Hermitian metrics, whose Chern curvatures descend (cf.\ \cite{Demailly1997,Demailly2007,CampanaPaun2016}). 
		Compatibility on overlaps follows from Lemma~\ref{lem:jet-preserve-orbifold} and Proposition~\ref{prop:jet-orbifold-presentation}.
	\end{proof}
	
	\begin{remark}
		All statements above avoid the inertia formalism: they are proved chartwise and glued by finite group equivariance. This aligns with the Satake--Kawasaki viewpoint and the chartwise index/Hirzebruch--Riemann--Roch methods used elsewhere in the paper \cite{Kawasaki1979,Kawasaki1981,Lerman2008}.
	\end{remark}
	
	\begin{lemma}[Jets preserve proper \'etale groupoids]
		\label{lem:jet-preserve-groupoid-min}
		Let $R \rightrightarrows X$ be a proper \'etale analytic groupoid presenting the compact complex orbifold $\X \simeq [\,R \rightrightarrows X\,]$.
		For every integer $k \ge 0$, the holomorphic jet functor sends
		\[
		(s,t,m,u)\colon R \rightrightarrows X
		\quad\longmapsto\quad
		\bigl(J^k s,\, J^k t,\, J^k m,\, J^k u\bigr)\colon J^kR \rightrightarrows J^kX,
		\]
		where $s,t\colon R\to X$ are source/target, $m\colon R\times_{s,X,t}R \to R$ is composition, and $u\colon X\to R$ is the unit.
		Then $J^kR \rightrightarrows J^kX$ is a proper \'etale analytic groupoid and there is a canonical equivalence of analytic stacks
		\[
		J^k\X \;\simeq\; [\,J^kR \rightrightarrows J^kX\,].
		\]
	\end{lemma}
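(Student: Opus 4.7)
The plan is to verify the four groupoid axioms, the \'etale property, properness, and the stack equivalence in that order, treating the jet functor $J^k(-)$ as a right-exact-type construction that preserves all the relevant analytic-local structures and finite limits appearing in the definition of a proper \'etale groupoid.

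\emph{Step 1: Groupoid structure.} First, I will use functoriality of $J^k$ on holomorphic morphisms to transport the structure maps $(s,t,m,u,\iota)$ of $R\rightrightarrows X$ to the jet level. The nontrivial point is that the composition map $m$ is defined on the fiber product $R\times_{s,X,t}R$, so I need that $J^k$ preserves such fiber products of holomorphic maps. Because $J^k$ is a representable analytic functor built from finite-order Taylor data, it commutes with finite inverse limits in the analytic category; in particular $J^k(R\times_{s,X,t}R)\simeq J^kR\times_{J^ks,\,J^kX,\,J^kt}J^kR$. Applying $J^k$ to the groupoid identities (associativity, unitality, invertibility) then produces the corresponding identities at the jet level, so $(J^ks,J^kt,J^km,J^ku,J^k\iota)$ is a groupoid.

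\emph{Step 2: \'Etaleness.} Since $s,t\colon R\to X$ are \'etale and $J^k$ is constructed from the sheaf of principal parts, the induced maps $J^ks,J^kt\colon J^kR\to J^kX$ are again \'etale: locally an \'etale map is a biholomorphism onto an open subset, and $J^k$ of a local biholomorphism is again a local biholomorphism (the inverse is obtained by applying $J^k$ to the local inverse, using functoriality).

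\emph{Step 3: Properness.} Properness of $R\rightrightarrows X$ means that the anchor $(s,t)\colon R\to X\times X$ is proper. I will show that $(J^ks,J^kt)\colon J^kR\to J^kX\times J^kX$ is proper by a base-change argument: the diagonal stabilizer fibers are finite groups (compact), and $J^k$ of a finite morphism is finite because $J^k(\mathrm{pt})=\mathrm{pt}$ and $J^k$ commutes with finite disjoint unions. More precisely, I will verify the lifting criterion for proper holomorphic maps chartwise on an atlas $\{[U_i/G_i]\}$, where $J^kR|_i \simeq J^kU_i\times G_i$ and the anchor factors through the finite group action; compactness of fibers over compact sets in $J^kX\times J^kX$ then reduces to compactness of $G_i$-orbits, which is automatic since each $G_i$ is finite.

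\emph{Step 4: Stack equivalence.} Finally, to obtain $J^k\X\simeq[J^kR\rightrightarrows J^kX]$, I will invoke the universal property of the quotient stack together with the chartwise identification $J^k[U_i/G_i]\simeq[J^kU_i/G_i]$ from Lemma~\ref{lem:jet-preserve-orbifold}. Both sides assign to a test analytic space $T$ the groupoid of $J^kR$-torsors over it; on charts these groupoids agree by Step~1--3 and by the functoriality of Morita equivalence under $J^k$, and these local equivalences glue by descent along the \'etale cover $J^kX\to J^k\X$.

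\emph{Main obstacle.} I expect Step~3 to be the delicate point: properness is a global property, and although $J^k$ preserves finiteness on fibers, one must carefully check that proper maps (which are closed with compact fibers in the analytic sense) remain proper after taking jets, since jet spaces of noncompact manifolds are noncompact. The resolution is to work relative to the anchor $(s,t)$ and exploit that the fibers of $J^k(s,t)$ over a point in $J^kX\times J^kX$ are in natural bijection with the fibers of $(s,t)$, because jet data on $R$ is determined by jet data on $X$ together with a choice of groupoid arrow. This fiberwise identification reduces properness of $J^k(s,t)$ to properness of $(s,t)$, which is given.
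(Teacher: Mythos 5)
Your proposal follows essentially the same four-step decomposition as the paper (finite-limit preservation, \'etaleness, properness, stack equivalence), so the routes are close. The point worth flagging is Step 3. The paper's Step (2.b) asserts that $J^k$ preserves proper holomorphic maps in general, claiming the fibers of $J^k f$ are ``compact fibers identified with $k$-jets along the compact fiber of $f$.'' Taken literally this is false: for the constant map $f\colon Z\to\mathrm{pt}$ with $Z$ compact of positive dimension, $J^k f$ has noncompact fiber $J^k Z$. Your ``Main obstacle'' paragraph correctly identifies that one must use the \'etaleness of $s$ (equivalently $t$): a $k$-jet into $R$ is uniquely determined by its $s$-image in $J^k X$ together with its base point in $R$, so the fiber of $(J^k s, J^k t)$ over $(j_1,j_2)$ embeds into the finite fiber $(s,t)^{-1}(\mathrm{ev}_0 j_1,\mathrm{ev}_0 j_2)$, which is the mechanism that actually transfers properness. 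Two small caveats to your write-up. First, this is an embedding, not a bijection: the $s$-lift of $j_1$ at a given arrow $r$ need not have $t$-image $j_2$, so some points of the $(s,t)$-fiber drop out --- but an injection into a compact set is all you need. Second, the chartwise identification $J^k R|_i\simeq J^k U_i\times G_i$ only accounts for arrows within a single chart; arrows between overlapping charts also contribute to $(J^k s,J^k t)^{-1}$ of a compact set in $J^k X\times J^k X$, so the fiber-embedding argument (not the chart decomposition) is what should carry the global properness claim. With those adjustments your Step 3 is actually more careful than the paper's, and the remaining steps are correct.
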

	
	\begin{proof}
		We proceed in four steps.
		
		\smallskip
		\noindent\emph{Step 1: Functoriality and fiber products.}
		For complex analytic spaces (or manifolds), the $k$--jet functor $J^k(-)$ is defined by the holomorphic functor
		\[
		J^k(X) \;=\; \underline{\mathrm{Hom}}_{\mathrm{an}}\bigl(\mathrm{Specan}\,\C[\epsilon]/(\epsilon^{k+1}),\,X\bigr),
		\]
		which is compatible with holomorphic maps by precomposition. Standard properties (see \cite{Lerman2008}) yield:
		
		\begin{enumerate}
			\item[(1.a)] \emph{Base change and finite fiber products:} for any diagram $Y \xrightarrow{f} Z \xleftarrow{g} Y'$ with a finite fiber product in the analytic category, the canonical map
			\[
			J^k(Y\times_Z Y') \;\xrightarrow{\;\sim\;}\; J^k(Y)\times_{J^k(Z)} J^k(Y')
			\]
			is an isomorphism of analytic spaces.
			\item[(1.b)] \emph{Compatibility with identities:} $J^k(\mathrm{id}_X)=\mathrm{id}_{J^kX}$ and $J^k(f\circ g)=J^k f\circ J^k g$.
		\end{enumerate}
		
		\smallskip
		\noindent\emph{Step 2: \'Etaleness and properness are preserved.}
		Let $f\colon Y\to Z$ be holomorphic.
		
		\begin{enumerate}
			\item[(2.a)] \emph{\'Etale maps.} If $f$ is \'etale (i.e.\ a local biholomorphism), then in local coordinates $f$ is given by holomorphic charts with invertible Jacobian. 
			The induced map on $k$--jets $J^k f\colon J^k Y \to J^k Z$ is again a local biholomorphism (it is given by the induced map on $k$--jets of germs, whose differential is block upper triangular with invertible diagonal blocks coming from $df$). 
			Hence $J^k f$ is \'etale.
			\item[(2.b)] \emph{Proper maps.} If $f$ is proper, then $f$ is closed with compact fibers. The functorial description of $J^k(-)$ shows that $J^k f$ has compact fibers identified with $k$--jets along the compact fiber of $f$; moreover, $J^k f$ is closed because it is obtained from $f$ by a finite-type fibered construction compatible with base change (cf.\ Step~(1.a)). Hence $J^k f$ is proper. 
			A direct proof in the analytic category is given in \cite{Lerman2008}.
		\end{enumerate}
		
		\smallskip
		\noindent\emph{Step 3: Groupoid axioms after applying $J^k(-)$.}
		Write the original groupoid as $(R \rightrightarrows X; s,t,m,i,e)$, with source $s$, target $t$, multiplication $m$, inverse $i$, and unit $e=u$.
		Apply $J^k(-)$ to obtain structure maps
		\[
		J^k s,\ J^k t\colon J^kR \rightrightarrows J^kX,\qquad
		J^k m\colon J^k(R\times_{X}R)\to J^kR,\qquad
		J^k i\colon J^kR\to J^kR,\qquad
		J^k e\colon J^kX\to J^kR.
		\]
		By Step~(1.a), we have a canonical identification
		\[
		J^k(R\times_{X}R)\;\cong\; J^kR \times_{J^kX} J^kR,
		\]
		so $J^k m$ is indeed defined on the correct fiber product.
		Functoriality (Step~(1.b)) preserves identities of maps; hence the groupoid axioms (associativity of $m$, unit and inverse laws, and the compatibility $s\circ m=s\circ\mathrm{pr}_1$, $t\circ m=t\circ\mathrm{pr}_2$) are transported verbatim to their jet counterparts.
		Because $s,t$ are \'etale and $(s,t)\colon R\to X\times X$ is proper, Step~2 implies that $J^k s, J^k t$ are \'etale and $(J^k s, J^k t)\colon J^kR\to J^kX\times J^kX$ is proper. Therefore $J^kR \rightrightarrows J^kX$ is a proper \'etale analytic groupoid.
		
		\smallskip
		\noindent\emph{Step 4: Equivalence of stacks.}
		Let $\X \simeq [\,R \rightrightarrows X\,]$ be the analytic stack presented by $R\rightrightarrows X$.
		Define the stack $[\,J^kR \rightrightarrows J^kX\,]$ in the usual way (objects over a test space $T$ are $J^kX$–objects over $T$ with isomorphisms parametrized by $J^kR$).
		By the 2–functoriality of $J^k(-)$ on groupoids and Step~(1.a), there is a canonical morphism of stacks
		\[
		\Phi_k\colon J^k\X \longrightarrow [\,J^kR \rightrightarrows J^kX\,],
		\]
		obtained by applying $J^k(-)$ to the presentation and using the universal property of the stackification.
		Conversely, an object of $[\,J^kR \rightrightarrows J^kX\,](T)$ is, by definition, descent data on $J^kX\times T$ for the groupoid $J^kR\times T \rightrightarrows J^kX\times T$; composing with the canonical projection $J^kX\times T \to X\times T$ and using Step~(1.a) provides compatible descent data for $R\times T \rightrightarrows X\times T$, whence an object of $\X(T)$ together with a $k$–jet structure. 
		This constructs an inverse up to isomorphism to $\Phi_k$.
		Standard descent arguments (see \cite{Lerman2008}) show that $\Phi_k$ is essentially surjective and fully faithful, hence an equivalence of analytic stacks.
		
		\smallskip
		Combining the four steps, we conclude that $J^kR \rightrightarrows J^kX$ is a proper \'etale analytic groupoid presenting $J^k\X$, i.e.
		\[
		J^k\X \;\simeq\; [\,J^kR \rightrightarrows J^kX\,]. 
		\]
	\end{proof}
	
	\begin{definition}[Equivariant descent for bundles and maps]
		\label{def:descend-bundles-min}
		If a holomorphic vector bundle $E_X\to X$ is endowed with an isomorphism $\varphi\colon s^*E_X\!\to t^*E_X$ over $R$ satisfying the cocycle condition, then $(E_X,\varphi)$ descends uniquely to a bundle $E$ on $\X$.  
		The same holds for morphisms of bundles and for projectivization $\PP(E_X)$.
	\end{definition}
	
	\begin{corollary}[DS data descend]
		\label{cor:ds-descend-min}
		Fix an orbifold atlas $\{[U_i/G_i]\}$ for $\X$.
		For each level $k$, the chartwise DS objects
		\[
		U_{i,k},\quad \pi_{i,k}\colon U_{i,k}\to U_{i,k-1},\quad
		\OO_{U_{i,k}}(1)
		\]
		are $G_i$–equivariant and therefore descend to
		\[
		\X_k,\quad \pi_k\colon \X_k\to \X_{k-1},\quad
		\OO_{\X_k}(1).
		\]
		All tautological sequences and vertical tangent bundles are obtained by descent.
	\end{corollary}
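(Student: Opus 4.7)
The plan is induction on $k$, invoking only two ingredients already at hand: (a) chartwise $G_i$–equivariance of each Demailly--Semple step, established in Proposition~\ref{prop:ds-equivariance}; and (b) the descent principle of Definition~\ref{def:descend-bundles-min}, namely that $G_i$–equivariant bundles and morphisms on $U_{i,k}$ descend canonically to $[U_{i,k}/G_i]$, as already used in Lemma~\ref{lem:ds-descent}. No new analytic content is required; the work is in tracking the inductive prolongation of the cocycle along the tower. The base case $k=0$ is immediate: $\X_0=\X$ with the given atlas $\{[U_i/G_i]\}$, and there is no tautological bundle to construct.

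For the inductive step, assume $(\X_{k-1},\OO_{\X_{k-1}}(1),\pi_{k-1})$ is presented chartwise by $\{[U_{i,k-1}/G_i]\}$ with the prolonged $G_i$–action. Proposition~\ref{prop:ds-equivariance} then shows that the projectivization $U_{i,k}=\PP(T_{U_{i,k-1}/\mathcal{F}_i})$ carries a compatible holomorphic $G_i$–action under which $\pi_{i,k}$, the tautological subbundle $\mathcal{S}_k$, the quotient line bundle $\OO_{U_{i,k}}(1)$, the vertical tangent bundle $T_{U_{i,k}/U_{i,k-1}}$, and the two tautological short exact sequences are all $G_i$–equivariant. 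Applying Definition~\ref{def:descend-bundles-min} chart by chart produces the required DS objects on $[U_{i,k}/G_i]$ together with their descended tautological sequences.

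Compatibility on overlaps is handled by the same functoriality underlying Lemma~\ref{lem:jet-preserve-orbifold} and Proposition~\ref{prop:jet-orbifold-presentation}: any orbifold change of charts $\phi_{ij}\colon(\widetilde U_{ij},H_{ij})\hookrightarrow(U_j,G_j)$ is prolonged, under $\PP(T_{(-)/\mathcal{F}})$, to an orbifold change of charts at level $k$, because projectivization commutes with pullback along étale holomorphic maps and preserves equivariant finite fiber products. Cocycle compatibility on triple overlaps then transports verbatim, so the chartwise data glue to global objects $(\X_k,\OO_{\X_k}(1),\pi_k)$ with the stated descended sequences. The only mildly delicate step is verifying that the level-$(k-1)$ groupoid cocycle lifts to a compatible level-$k$ cocycle under $\PP(T_{(-)/\mathcal{F}})$; naturality of projectivization under equivariant étale morphisms supplies exactly this lift, so the putative obstruction is purely formal and the induction closes.
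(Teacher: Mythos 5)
Your proposal is correct and follows essentially the same route the paper leaves implicit for this corollary: chartwise $G_i$–equivariance of the projectivization step (Proposition~\ref{prop:ds-equivariance}), descent of equivariant bundles and morphisms (Definition~\ref{def:descend-bundles-min}, as already exercised in Lemma~\ref{lem:ds-descent}), and overlap compatibility via the jet/groupoid-preservation statements (Lemma~\ref{lem:jet-preserve-orbifold}, Proposition~\ref{prop:jet-orbifold-presentation}). Your explicit induction on $k$ and the observation that the level-$(k-1)$ cocycle lifts under $\PP(T_{(-)/\mathcal F})$ by naturality simply make precise the cocycle bookkeeping that the paper treats as immediate.
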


	\subsection{Curvature–negativity–positivity package}
	\label{subsec:stack-ggd-curv}
	
	We fix a compact complex orbifold $\X$ with coarse moduli map $\pi\colon \X\to Y$ and an ample line bundle $A$ on $Y$.  
	Throughout, all computations are performed chartwise on finite orbifold charts $[U/G]$ and then descended by equivariance, as in the previous subsections.  
	We write $\omega_A$ for a fixed smooth positive curvature form of $A$ on $Y$ and also for its pullback to~$\X$.
	
	\begin{assumption}[Curvature package]\label{assump:curv}
		\leavevmode
		\begin{enumerate}[label=\textup{(H\arabic*)}, leftmargin=1.6em]
			\item \textbf{Coarse positivity.} 
			A line bundle $A$ is ample on $Y$ with a smooth Hermitian metric $h_A$ of strictly positive curvature $\Theta_{h_A}(A)=\omega_A>0$; by pullback we view $\omega_A$ on~$\X$ as well.
			
			\item \textbf{Vertical negativity.}
			On the regular 1–jet locus of $\X$, along \emph{vertical} directions of the first jet space, the pullback line bundle $\ev_0^*A^{-1}$ admits a Hermitian metric $h_{-A}$ whose curvature satisfies
			\[
			\Theta_{h_{-A}}\!\bigl(\ev_0^*A^{-1}\bigr)\big|_{T_{\rm vert}J^1(\X)} \;<\; 0 .
			\]
			
			\item \textbf{Semple positivity.}
			Denote by $\X_0\coloneq \X$ and by $\X_{k+1} \coloneq \PP\bigl(T_{\rm vert}\X_k\bigr)$ the Demailly–Semple tower with projection $\pi_{k+1}:\X_{k+1}\to\X_k$ and tautological quotient line bundle $\OO_{\X_{k+1}}(1)$.  
			Then $\OO_{\X_{k+1}}(1)$ carries a smooth Hermitian metric whose curvature is semipositive on $T\X_{k+1}$ and strictly positive on the fibers of $\pi_{k+1}$.
			
			\item \textbf{Log completeness.}
			There exists a log-compactification $(\overline{\X},D)$ equipped with a complete Poincaré-type Kähler metric compatible with $D$, so that weighted $L^2$ estimates apply on the regular jet locus.
		\end{enumerate}
	\end{assumption}

\begin{example}[Compact hyperbolic Riemann surface]
	Let $Y$ be a compact Riemann surface of genus $g \ge 2$ equipped with its canonical hyperbolic metric  $\omega_{\mathrm{hyp}}$ satisfying $\mathrm{Ric}(\omega_{\mathrm{hyp}})=-\omega_{\mathrm{hyp}}$.  
	Set $A = K_Y$, the canonical line bundle, endowed with the Hermitian metric  $h_A$ induced by $\omega_{\mathrm{hyp}}$.
	
	\paragraph{(i) Coarse positivity.}
	Since $\deg K_Y = 2g-2 > 0$, the bundle $A$ is ample and $\Theta_{h_A}(A)=\omega_A>0$.
	
	\paragraph{(ii) Vertical negativity.}
	Consider the $1$–jet space $J^1(Y)$ with the evaluation map $\mathrm{ev}_0\colon J^1(Y)\to Y$.
	Along vertical directions of $\mathrm{ev}_0$, define a Hermitian metric
	\[
	h_{-A}^{(\varepsilon)} = h_A^{-1}\,e^{-\varepsilon\psi_{\mathrm{vert}}},
	\]
	where $\psi_{\mathrm{vert}}$ is a strictly plurisubharmonic potential on each fiber  (e.g.\ the Fubini–Study potential on $\PP(TY)$). 
	Then
	\[
	\Theta_{h_{-A}^{(\varepsilon)}}\!\bigl(\mathrm{ev}_0^*A^{-1}\bigr)
	= -\,\mathrm{ev}_0^*\omega_A - \varepsilon\,\omega_{\mathrm{vert}} < 0
	\]
	on $T_{\mathrm{vert}}J^1(Y)$, giving the required negativity.
	
	\paragraph{(iii) Semple positivity.}
	The first Semple level $Y_1=\PP(TY)$ has tautological quotient line bundle 
	$\OO_{Y_1}(1)$ equipped with the Fubini–Study metric $h_{\mathrm{FS}}$ satisfying
	\[
	\Theta_{h_{\mathrm{FS}}}\bigl(\OO_{Y_1}(1)\bigr)\!\big|_{T_{\mathrm{vert}}}>0,
	\quad
	\Theta_{h_{\mathrm{FS}}}\bigl(\OO_{Y_1}(1)\bigr)\!\big|_{T_Y}\ge 0.
	\]
	Since $TY$ is rank one, higher Semple levels $Y_k$ coincide with $Y_1$, so positivity persists.
	
	\paragraph{(iv) Log completeness.}
	If one removes finitely many points $D\subset Y$, the hyperbolic metric extends to a complete Poincaré metric 
	\[
	\omega_{\mathrm{P}} = \frac{\sqrt{-1}\,dz\wedge d\bar{z}}
	{|z|^2(\log|z|^{-1})^2}
	\]
	near each puncture. Hence $(\overline{Y},D)$ is log-complete and weighted $L^2$ estimates apply.
\end{example}

\begin{example}[Finite–volume hyperbolic orbifold curve]
	Let $\Gamma\subset \mathrm{PSL}_2(\RR)$ be a cofinite Fuchsian group, possibly with elliptic (cone) or parabolic (cusp) elements.  
	Set $\X=\HH/\Gamma$ with the standard orbifold structure, and let $\pi\colon \HH\to\X$ be the quotient map.
	
	\paragraph{(i) Coarse positivity.}
	The orbifold canonical line bundle $A=K_{\X}$ admits a smooth Hermitian metric $h_A$ whose curvature $\Theta_{h_A}(A)=\omega_{\mathrm{hyp}}>0$ is the pushdown of the hyperbolic form $\frac{i\,dz\wedge d\bar z}{(\Im z)^2}$.
	
	\paragraph{(ii) Vertical negativity.}
	On the 1–jet bundle $J^1(\X)$, choose the $G$–invariant Hermitian metric
	\[
	h_{-A}^{(\varepsilon)} = h_A^{-1} e^{-\varepsilon\psi_{\mathrm{vert}}},
	\]
	where $\psi_{\mathrm{vert}}$ is locally the Fubini–Study potential on $\PP(T_{\mathrm{vert}}J^1(\HH))$.
	Then the curvature satisfies
	\[
	\Theta_{h_{-A}^{(\varepsilon)}}\!\bigl(\mathrm{ev}_0^*A^{-1}\bigr)
	< 0
	\quad\text{on } T_{\mathrm{vert}}J^1(\X).
	\]
	
	\paragraph{(iii) Semple positivity.}
	At the first Semple level $\X_1=\PP(T\X)$, the tautological quotient line bundle $\OO_{\X_1}(1)$  with Fubini–Study metric $h_{\mathrm{FS}}$ satisfies
	\[
	\Theta_{h_{\mathrm{FS}}}\bigl(\OO_{\X_1}(1)\bigr)\!\big|_{T_{\mathrm{vert}}}>0,
	\quad
	\Theta_{h_{\mathrm{FS}}}\bigl(\OO_{\X_1}(1)\bigr)\!\big|_{T_{\X}}\ge 0.
	\]
	By construction, these inequalities extend to $\X_k=\PP(T_{\mathrm{vert}}\X_{k-1})$ for all $k\ge1$.
	
	\paragraph{(iv) Log completeness near cusps and cone points.}
	If $p\in\X$ is a cusp, choose a local coordinate $z$ with $\Im z>0$ and define the Poincaré-type metric
	\[
	\omega_{\mathrm{P}} = \frac{\sqrt{-1}\,dz\wedge d\bar{z}}
	{|z|^2(\log|z|^{-1})^2}.
	\]
	If $p$ is a cone point of order $\nu$, use a local uniformizing coordinate $w$ with $z=w^\nu$; 
	then the orbifold metric is
	\[
	\omega_{\mathrm{cone}} 
	= \frac{\nu^2\,\sqrt{-1}\,dw\wedge d\bar w}
	{|w|^{2(1-\tfrac1\nu)}(1-|w|^2)^2}.
	\]
	Both forms are complete and yield a log–complete orbifold Kähler metric on $(\overline{\X},D)$,
	verifying (H4).
\end{example}
	
	We recall the standard curvature formula for the tautological line bundle on a projectivized bundle (all statements are understood chartwise and then descended).
	
	\begin{lemma}\label{lem:FS}
		Let $E\to M$ be a holomorphic vector bundle of rank $r\ge2$ over a complex manifold $M$, endowed with a smooth Hermitian metric $h_E$.  
		On $\PP(E)$ (Grothendieck projectivization of lines in $E^\vee$, so that $\OO_{\PP(E)}(1)$ is the tautological \emph{quotient} line bundle), equip $\OO_{\PP(E)}(1)$ with the induced metric $h_{\rm taut}$.  
		Then at a point $([v],x)$ with $0\neq v\in E_x^\vee$ one has the decomposition
		\[
		\Theta_{h_{\rm taut}}\!\bigl(\OO_{\PP(E)}(1)\bigr)
		\;=\;
		\omega_{\rm FS}(h_E)\;\;+\;\;\pi^*\Bigl(\frac{\langle \Theta_{h_E}(E)\,v,v\rangle_{h_E}}{\langle v,v\rangle_{h_E}}\Bigr),
		\]
		where $\omega_{\rm FS}(h_E)$ is the (fiberwise) Fubini–Study $(1,1)$–form determined by $h_E$, positive along the projective fibers, and $\pi:\PP(E)\to M$ is the projection.
	\end{lemma}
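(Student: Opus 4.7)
The plan is to verify the identity pointwise at an arbitrary point $(x_0,[v_0])\in\PP(E)$, reducing the computation to an explicit second-order Taylor expansion in a well-chosen frame. Both sides are globally defined smooth $(1,1)$-forms, so pointwise verification in a trivialization suffices.

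First I choose local coordinates $(z_1,\dots,z_n)$ on $M$ centered at $x_0$ together with a normal (geodesic) holomorphic frame $(e_1,\dots,e_r)$ of $E$, so that the metric matrix satisfies $h_{\alpha\bar\beta}(z)=\delta_{\alpha\beta}-\sum_{i,j}R_{i\bar j\alpha\bar\beta}(x_0)\,z_i\bar z_j+O(|z|^3)$, with all first derivatives of $h_{\alpha\bar\beta}$ vanishing at $x_0$. After rotating the frame I may assume $v_0=e^r$ in the dual basis, and introduce affine coordinates $\xi^1,\dots,\xi^{r-1}$ on the fiber $\PP(E_{x_0})$ centered at $[e^r]$. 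The tautological sub-line $\OO_{\PP(E)}(-1)\subset\pi^*E^\vee$ is then locally trivialized by $\sigma(z,\xi)=e^r(z)+\sum_{\alpha<r}\xi^\alpha e^\alpha(z)$, and the induced subbundle metric gives
\[
\|\sigma\|^2(z,\xi)=\sum_{\alpha,\beta}h^{\alpha\bar\beta}(z)\,v^\alpha\overline{v^\beta},\qquad v^\alpha=\xi^\alpha\ (\alpha<r),\ v^r=1,
\]
with $h^{\alpha\bar\beta}(z)=\delta_{\alpha\beta}+\sum_{i,j}R_{i\bar j\alpha\bar\beta}(x_0)\,z_i\bar z_j+O(|z|^3)$ by Neumann inversion of the previous expansion.

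Next I compute $\Theta_{h_{\rm taut}}(\OO(1))=\partial\bar\partial\log\|\sigma\|^2$ at $(z,\xi)=(0,0)$, using that $\OO(1)=\OO(-1)^\vee$ with the dual metric. Since $\|\sigma\|^2(0,0)=1$ and both $\partial\|\sigma\|^2$ and $\bar\partial\|\sigma\|^2$ vanish at this point—the linear $z$-terms are killed by the normal frame and the linear $\xi$-terms by centering at $v_0$—the identity $\partial\bar\partial\log\|\sigma\|^2|_{(0,0)}=\partial\bar\partial\|\sigma\|^2|_{(0,0)}$ holds. Extracting the mixed second partials of the quadratic part yields
\[
\partial\bar\partial\|\sigma\|^2\big|_{(0,0)}=\sum_{\alpha<r}d\xi^\alpha\wedge d\bar\xi^\alpha\;+\;\sum_{i,j}R_{i\bar j r\bar r}(x_0)\,dz_i\wedge d\bar z_j,
\]
where the horizontal summand arises because differentiating $z_i\bar z_j$ to $dz_i\wedge d\bar z_j$ leaves the factor $v^\alpha\overline{v^\beta}$ to be evaluated at $\xi=0$, which selects $\alpha=\beta=r$.

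Finally I identify the two summands. The $\xi$-part is the fibrewise Fubini–Study form $\omega_{\rm FS}(h_E)$ at $[v_0]$ under the normalization $\|v_0\|^2=1$, while the $z$-part is exactly $\pi^*\bigl(\langle\Theta_{h_E}(E)v_0,v_0\rangle_{h_E}/\langle v_0,v_0\rangle_{h_E}\bigr)$, since contracting the $\mathrm{End}(E)$-valued curvature $\Theta_{h_E}(E)$ against $v_0=e^r$ in a normal frame extracts precisely the $(r,\bar r)$-component $\sum_{i,j}R_{i\bar j r\bar r}\,dz_i\wedge d\bar z_j$, and the denominator equals $1$. The main obstacle is purely bookkeeping: keeping track of signs and index placements when passing between $E$ and $E^\vee$, and between the subbundle $\OO(-1)$ and the quotient $\OO(1)$, under the Grothendieck convention. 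Once this convention is fixed by the inclusion $\OO_{\PP(E)}(-1)\subset\pi^*E^\vee$ with its induced metric (whose dual furnishes the tautological quotient metric on $\OO(1)$), the remainder is a classical second-order expansion and requires no new analytic input.
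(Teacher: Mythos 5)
Your proposal is correct and follows essentially the same route as the paper's proof: locally trivialize $\OO_{\PP(E)}(-1)\subset\pi^*E^\vee$, write $\Theta_{h_{\rm taut}}(\OO(1))=\partial\bar\partial\log\|\sigma\|^2$, and identify the vertical and horizontal pieces. What you add is precisely the step the paper glosses over. The paper's proof simply asserts that "splitting the $(1,1)$-form into the pure fiber part (variation of $[v]$ with $H$ frozen) and the horizontal part (variation of $H$ with $[v]$ frozen) yields the claimed sum," but in a generic frame $\partial\bar\partial\log(v^*H^{-1}v)$ has nonzero mixed $dz\wedge d\bar\xi$ terms as well as first-order corrections from the quotient rule. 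Your choice of a Bochner normal frame for $h_E$ together with affine fiber coordinates centered at $[v_0]$ kills all first derivatives of $\|\sigma\|^2$ at the base point, which is exactly what makes the clean two-term split (and the absence of cross terms) a theorem rather than an assertion. You are also right to flag the $E$-versus-$E^\vee$ bookkeeping: the curvature appearing in the lemma is $\Theta_{h_E}(E)$ contracted against the metric dual $v^\sharp\in E$ of $v\in E^\vee$; in your orthonormal normal frame at $x_0$ this identifies $v_0=e^r$ with $e_r$ and the denominator with $1$, so the signs come out as you claim. In short: same method, more honest computation.
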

	
	\begin{proof}
		Fix a local holomorphic frame $e=(e_1,\dots,e_r)$ of $E$ on a coordinate ball and write $H=(h_{i\bar j})$ for the Hermitian matrix of $h_E$ in this frame.  
		A nonzero covector $v=\sum_i v_i e_i^\vee$ defines homogeneous fiber coordinates $[v]\in\PP(E_x)$ and the local fiberwise norm $\|v\|_H^2 \coloneq v^* H^{-1} v$.  
		The tautological metric on $\OO_{\PP(E)}(1)$ is $h_{\rm taut}([v]) \coloneq \|v\|_H^{-2}$ (dual convention).  Then
		\[
		-\log h_{\rm taut} \;=\; \log \|v\|_H^2 \;=\; \log\bigl(v^*H^{-1}v\bigr),
		\]
		and
		\[
		\Theta_{h_{\rm taut}}\!\bigl(\OO_{\PP(E)}(1)\bigr)
		\;=\;
		\sqrt{-1}\,\partial\bar\partial \log\bigl(v^*H^{-1}v\bigr).
		\]
		Splitting the $(1,1)$–form into the pure fiber part (variation of $[v]$ with $H$ frozen) and the horizontal part (variation of $H$ with $[v]$ frozen) yields the claimed sum: the fiber part is $\omega_{\rm FS}(h_E)$; the horizontal part is the Griffiths curvature term
		$\pi^*\!\bigl(\langle \Theta_{h_E}(E)\,v,v\rangle_{h_E}/\langle v,v\rangle_{h_E}\bigr)$.
		This computation is standard (see, e.g., \cite{GriffithsHarris,Demailly1997,Kobayashi1998}). 
	\end{proof}
	
	\begin{lemma}[Vertical negativity $\Rightarrow$ positivity of $\OO_{\X_1}(1)$]\label{lem:neg2pos}
		Assume \textup{(H1)}–\textup{(H2)}.  On the first Semple level $\X_1=\PP\!\bigl(T_{\rm vert}\X\bigr)$, the tautological line bundle $\OO_{\X_1}(1)$ carries a smooth Hermitian metric whose curvature satisfies
		\[
		\Theta\bigl(\OO_{\X_1}(1)\bigr)\;\ge\; \omega_{\rm FS}\;+\;\pi_1^*\omega_A,
		\]
		in the sense of $(1,1)$–forms, where $\omega_{\rm FS}$ is positive along the fibers of $\pi_1\colon \X_1\to\X$ and $\pi_1^*\omega_A$ is semipositive (indeed positive along horizontal directions).
		In particular, $\OO_{\X_1}(1)$ is strictly positive on vertical (fiber) directions and semipositive globally.
	\end{lemma}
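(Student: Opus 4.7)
The plan is to apply Lemma~\ref{lem:FS} to the vertical tangent bundle $E=T_{\rm vert}\X$ (which at the initial Demailly--Semple level equals $T\X$), equipped with a carefully chosen $G$-invariant Hermitian metric $h_E$ assembled from the data of (H1)--(H2), and then to descend the resulting Chern curvature identity via finite averaging (Proposition~\ref{prop:hermitian-ds} and Corollary~\ref{cor:ds-descend-min}). The whole argument is local on an orbifold chart $[U/G]$, with global validity obtained by $G$-equivariance.

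First I would construct $h_E$ chartwise. Identifying the total space of $TU\to U$ with $J^1U$ via $\ev_0$, hypothesis (H2) supplies a smooth metric on $\ev_0^*A^{-1}$ whose Chern curvature is strictly negative along the vertical fibers $T_xU\subset T_{(x,0)}J^1U$. Passing to the dual line bundle and restricting its potential to the zero section yields a fiberwise strictly plurisubharmonic potential on $T_xU$, whose Hessian at the origin determines a smooth Hermitian norm on $TU$; combining this with the pullback of $h_A$ (so as to retain the horizontal $\omega_A$ contribution from (H1)) and $G$-averaging produces a smooth $G$-invariant Hermitian metric $h_E$ on $TU$.

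Next, I would apply Lemma~\ref{lem:FS} with $(E,h_E)=(T\X, h_E)$ on $\X_1=\PP(T\X)$ to obtain, at every $([v],x)\in\X_1$,
\[
\Theta_{h_{\rm taut}}\!\bigl(\OO_{\X_1}(1)\bigr)
=\omega_{\rm FS}(h_E)\;+\;\pi_1^*\!\left(\frac{\langle \Theta_{h_E}(T\X)\,v,v\rangle_{h_E}}{\langle v,v\rangle_{h_E}}\right).
\]
The Fubini--Study summand is strictly positive along the fibers of $\pi_1$ by construction. Using the choice of $h_E$, I would verify the pointwise Griffiths bound $\langle \Theta_{h_E}(T\X)v,v\rangle_{h_E}/\langle v,v\rangle_{h_E}\ge \omega_A$ for every nonzero $v\in T\X$, which is precisely where the horizontal form $\omega_A$ from (H1) re-enters. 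Adding the two contributions yields $\Theta(\OO_{\X_1}(1))\ge \omega_{\rm FS}+\pi_1^*\omega_A$, hence strict positivity on the fibers of $\pi_1$ and semipositivity globally. Since every ingredient is $G_i$-equivariant on each chart $[U_i/G_i]$, finite averaging and the descent statement of Corollary~\ref{cor:ds-descend-min} produce a global orbifold Hermitian metric on $\OO_{\X_1}(1)$ satisfying the same inequality on all of $\X_1$.

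The main obstacle is the construction of $h_E$: converting the \emph{linear} vertical negativity of $\ev_0^*A^{-1}$ on $J^1\X$ into a genuine \emph{Griffiths} curvature lower bound on $T\X$ that matches $\omega_A$ in horizontal directions. This amounts to a Legendre/duality argument relating a fiberwise plurisubharmonic potential on $T\X$ to a Hermitian norm on $T\X$, together with careful bookkeeping to ensure that the horizontal $\omega_A$ contribution is not lost either under the dualization or under the $G$-averaging step. Once $h_E$ is correctly constructed, the remainder of the proof is formal manipulation of the tautological curvature formula of Lemma~\ref{lem:FS}.
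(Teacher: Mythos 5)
Your argument follows essentially the same route as the paper: work on a chart $[U/G]$, dualize the (H2) negativity of $\mathrm{ev}_0^*A^{-1}$ to obtain a Hermitian metric on $T_{\rm vert}\X$ with a Griffiths curvature lower bound of the form $\omega_A$, feed this into the projectivization curvature formula of Lemma~\ref{lem:FS}, and descend by $G$-equivariance. The only stylistic difference is that you implement the ``negativity-to-positivity transfer'' by taking the Hessian of the fiberwise psh potential at the zero section, whereas the paper invokes the identification of vertical jet directions with lines in $T\X$ via the contact distribution and cites Demailly for the resulting Griffiths bound; both descriptions leave the uniform lower bound $\ge c\,\omega_A$ at the same level of detail.
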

	
	\begin{proof}
		Work on a chart $[U/G]$ and drop the quotient notation for readability; all constructions are $G$–equivariant and descend.
		
		Consider the evaluation map at the base point $\ev_0\colon J^1(U)\to U$.  Along the vertical distribution of $J^1(U)\to U$, hypothesis (H2) provides a Hermitian metric $h_{-A}$ on $\ev_0^*A^{-1}$ with strictly negative curvature.  By duality, $h_A\coloneq h_{-A}^{-1}$ on $\ev_0^*A$ has strictly positive curvature on vertical directions:
		\[
		\Theta_{h_A}\!\bigl(\ev_0^*A\bigr)\big|_{T_{\rm vert}J^1(U)} \;>\; 0.
		\]
		The natural identification between vertical directions in $J^1(U)$ and lines in $T U$ (via the contact distribution) induces a Griffiths–type metric $h_{T}$ on the vertical bundle $T_{\rm vert}U$ whose curvature dominates $\ev_0^*\Theta_{h_A}(A)$ along vertical directions (this is the standard “negativity-to-positivity transfer” used by Demailly; see \cite{Demailly1997} and \cite{CampanaPaun2016}).  Projectivizing $E\coloneq T_{\rm vert}U$ and applying \Cref{lem:FS} gives
		\[
		\Theta\bigl(\OO_{\PP(E)}(1)\bigr)
		\;=\;
		\omega_{\rm FS}(h_T)\;+\;\pi_1^*\Bigl(\frac{\langle \Theta_{h_T}(E)\,v,v\rangle}{\langle v,v\rangle}\Bigr).
		\]
		The first term is fiberwise positive.  
		For the second term, the curvature of $E$ absorbs the positive contribution pulled back from $\ev_0^*A$ (by the construction of $h_T$), whence
		\[
		\frac{\langle \Theta_{h_T}(E)\,v,v\rangle}{\langle v,v\rangle}
		\;\ge\; c\,\omega_A
		\]
		for some uniform $c>0$ on compact subsets of the regular locus (after pulling back $\omega_A$ from $Y$ to $U$ and then to $\PP(E)$).  
		Rescaling the metric (which does not alter positivity) we may assume $c=1$, hence
		\[
		\Theta\bigl(\OO_{\PP(E)}(1)\bigr)\;\ge\; \omega_{\rm FS}(h_T)+\pi_1^*\omega_A.
		\]
		This inequality holds chartwise and is $G$–invariant; by descent it holds globally on $\X_1$.  
		Fiberwise strict positivity follows from $\omega_{\rm FS}>0$ on projective fibers, and semipositivity on all directions follows from $\pi_1^*\omega_A\ge0$ horizontally by (H1).
	\end{proof}
	
	\begin{corollary}[Semple positivity at level $1$]\label{cor:level1}
		Under \textup{(H1)}–\textup{(H2)}, the line bundle $\OO_{\X_1}(1)$ is semipositive on $T\X_1$ and strictly positive along the fibers of $\pi_1\colon \X_1\to\X$.
	\end{corollary}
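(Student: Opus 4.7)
The plan is to read off the corollary directly from the curvature inequality
\[
\Theta\bigl(\OO_{\X_1}(1)\bigr)\;\ge\;\omega_{\mathrm{FS}}(h_T)\;+\;\pi_1^*\omega_A
\]
already established in Lemma~\ref{lem:neg2pos}, and then to verify positivity on the two complementary distributions of $T\X_1$. Since the work of constructing a Hermitian metric with the required curvature has already been performed in that lemma, the corollary is essentially a bookkeeping statement about which summand dominates along which directions.

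First I would split $T\X_1$ chartwise as $T\X_1 = T_{\mathrm{vert}}\X_1 \oplus H$ where $H$ is any smooth horizontal complement to the fibers of $\pi_1\colon \X_1\to\X$ (for instance the Chern-connection horizontal lift). Along $T_{\mathrm{vert}}\X_1$, the pullback $\pi_1^*\omega_A$ vanishes identically, while $\omega_{\mathrm{FS}}(h_T)$ restricts to the standard Fubini–Study form on a projective fiber $\PP(T_{\mathrm{vert},x}\X)$ and is therefore strictly positive; this yields the strict fiberwise positivity. Along $H$, the form $\omega_{\mathrm{FS}}(h_T)$ is at worst zero (it is a tensor of purely fiberwise type), but $\pi_1^*\omega_A$ is semipositive on $H$ because $\omega_A>0$ on $T\X$ by (H1) and $d\pi_1|_H$ is injective; on mixed (vertical, horizontal) pairs the cross terms of a positive $(1,1)$-form are controlled by the diagonal entries via Cauchy–Schwarz, so no indefiniteness is introduced. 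Combining these two regimes yields semipositivity of $\Theta(\OO_{\X_1}(1))$ on all of $T\X_1$, together with strict positivity on $T_{\mathrm{vert}}\X_1$.

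Finally I would confirm that these positivity statements pass to the orbifold: since all curvature computations were performed on the chartwise model $[U_{i,1}/G_i]$ and $\omega_{\mathrm{FS}}(h_T)$, $\pi_1^*\omega_A$ are $G_i$–invariant by Proposition~\ref{prop:hermitian-ds} and Corollary~\ref{cor:ds-descent-orbifold}, the $(1,1)$–form descends and the inequality is preserved. The only potentially delicate point is the choice of horizontal complement $H$: on orbifold charts containing nontrivial isotropy one must choose $H$ to be $G_i$–equivariant, which is achieved by averaging any smooth complement; however this is routine and does not affect the sign of the curvature since averaging a positive form produces a positive form. Thus I do not expect any genuine obstacle, and the corollary follows immediately from Lemma~\ref{lem:neg2pos} together with the elementary vertical/horizontal decomposition of $T\X_1$.
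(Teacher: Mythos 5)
Your proposal is correct and follows exactly the paper's route: the paper's entire proof of this corollary is ``Immediate from Lemma~\ref{lem:neg2pos},'' since that lemma's statement already records that $\omega_{\mathrm{FS}}$ is fiberwise positive, $\pi_1^*\omega_A$ is semipositive, and hence $\OO_{\X_1}(1)$ is strictly positive vertically and semipositive globally. Your vertical/horizontal decomposition merely spells out why the corollary is immediate; note, though, that the Cauchy--Schwarz cross-term discussion is unnecessary, since both $\omega_{\mathrm{FS}}(h_T)$ and $\pi_1^*\omega_A$ are already globally semipositive $(1,1)$-forms on $\X_1$ and their sum is therefore semipositive without any subspace-by-subspace argument.
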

	
	\begin{proof}
		Immediate from \Cref{lem:neg2pos}.
	\end{proof}
	
Having equipped the tautological bundles with invariant Hermitian metrics, we now record the fundamental exact sequences governing their behavior along the higher levels of the Semple tower.
	
	\begin{lemma}[Exact sequences on the Semple tower]\label{lem:exact-tower}
		For each $k\ge0$, there are natural exact sequences on $\X_{k+1}=\PP(T_{\rm vert}\X_k)\colon$
		\[
		0 \longrightarrow \OO_{\X_{k+1}}(-1) \longrightarrow \pi_{k+1}^*T_{\rm vert}\X_k \longrightarrow \mathcal Q_{k+1} \longrightarrow 0,
		\]
        \[
		0 \longrightarrow T_{\rm vert}\X_{k+1} \longrightarrow T\X_{k+1} \longrightarrow \pi_{k+1}^*T\X_k \longrightarrow 0,
		\]
		where $\OO_{\X_{k+1}}(1)=\mathcal Q_{k+1}^\vee$ is the tautological quotient line bundle.  
		These sequences are $G$–equivariant on each local chart and hence descend.
	\end{lemma}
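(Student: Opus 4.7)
The plan is to construct both sequences chartwise on a finite quotient presentation $[U_i/G_i]$, check that every morphism is $G_i$-equivariant, and then invoke the descent formalism established in Proposition~\ref{prop:ds-equivariance}, Lemma~\ref{lem:ds-descent}, and Corollary~\ref{cor:ds-descent-orbifold} to globalize the sequences on $\X_{k+1}$. Because both sequences are classical on complex manifolds, the main content is bookkeeping: naturality under pullback, $G_i$-equivariance of tautological objects, and exactness of the descent functor from $G_i$-equivariant coherent sheaves on $U_{i,k+1}$ to orbifold sheaves on $[U_{i,k+1}/G_i]$.

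For the first sequence, I would fix the vector bundle $E \coloneq T_{\rm vert} U_{i,k}$ on the chart $U_{i,k}$ and apply the classical tautological (Euler-type) sequence on its projectivization $U_{i,k+1} = \PP(E)$, giving
\[
0 \longrightarrow \OO_{U_{i,k+1}}(-1) \longrightarrow \pi_{i,k+1}^{\ast} E \longrightarrow \mathcal{Q}_{i,k+1} \longrightarrow 0,
\]
with the convention on $\OO_{U_{i,k+1}}(1)$ fixed so as to match Lemma~\ref{lem:FS}. The induced $G_i$-action on $E$ (via the derivative of the $G_i$-action at level $k$) propagates by functoriality to $\pi_{i,k+1}^\ast E$, to the tautological subbundle, and to the quotient; the inclusion and projection are then $G_i$-equivariant morphisms of equivariant bundles.

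For the second sequence, I would apply the standard relative tangent sequence for the smooth submersion $\pi_{i,k+1}\colon U_{i,k+1}\to U_{i,k}$,
\[
0 \longrightarrow T_{U_{i,k+1}/U_{i,k}} \longrightarrow T U_{i,k+1} \longrightarrow \pi_{i,k+1}^\ast T U_{i,k} \longrightarrow 0,
\]
and identify $T_{U_{i,k+1}/U_{i,k}}$ with $T_{\rm vert} U_{i,k+1}$ by the very definition of the vertical tangent bundle relative to $\pi_{i,k+1}$. The $G_i$-action commutes with $d$ and with projectivization, so all three terms and both maps are $G_i$-equivariant.

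The only step that requires a little care is the descent, which I view as the main (though mild) obstacle. I would argue that the category of $G_i$-equivariant coherent sheaves on $U_{i,k+1}$ is abelian and the descent functor to $[U_{i,k+1}/G_i]$ is exact (this is simply invariant-sections on the underlying groupoid), so short exact sequences of equivariant bundles descend to short exact sequences of orbifold sheaves. Compatibility across overlapping charts follows from the functoriality of $\PP(-)$ and of the jet construction, which was already used in Lemma~\ref{lem:jet-preserve-orbifold} and Proposition~\ref{prop:jet-orbifold-presentation}. Assembling the descended local data then yields both exact sequences globally on $\X_{k+1}$, with $\OO_{\X_{k+1}}(1)$ identified as stated.
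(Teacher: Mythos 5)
Your proof takes essentially the same approach as the paper's: build both sequences chartwise from the classical tautological and relative tangent sequences, observe $G_i$-equivariance via functoriality of the derivative action, and descend. The paper's own proof is terser (a one-line appeal to the standard construction plus the earlier descent statements), whereas you helpfully make explicit the one mildly nontrivial point, namely that descent from $G_i$-equivariant sheaves on $U_{i,k+1}$ to $[U_{i,k+1}/G_i]$ is an exact functor so that short exactness is preserved.
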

	
	\begin{proof}
		This is the standard construction of the projectivized (vertical) tangent bundle and its tautological sequence, applied chartwise and glued by equivariance; see \cite{Demailly1997} and the descent statements established earlier.
	\end{proof}
	
	\begin{proposition}[Curvature propagation on the tower]\label{prop:propagation}
		Assume \textup{(H1)}–\textup{(H2)}.  Then for all $k\ge0$ there exist smooth Hermitian metrics $h_{k+1}$ on $\OO_{\X_{k+1}}(1)$ such that
		\[
		\Theta_{h_{k+1}}\!\bigl(\OO_{\X_{k+1}}(1)\bigr)
		\;\ge\;
		\omega_{\rm FS}^{(k)} \;+\; \pi_{k+1}^*\omega_A,
		\]
		where $\omega_{\rm FS}^{(k)}$ is a fiberwise Fubini–Study form on $\PP(T_{\rm vert}\X_k)$ induced from a Hermitian metric on $T_{\rm vert}\X_k$.  In particular, $\OO_{\X_{k+1}}(1)$ is strictly positive on $T_{\rm vert}\X_{k+1}$ and semipositive on $T\X_{k+1}$.
	\end{proposition}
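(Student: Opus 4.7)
The plan is to proceed by induction on $k$, with \Cref{cor:level1} providing the base case $k=0$ and the Griffiths–type curvature decomposition of \Cref{lem:FS} driving the propagation from level $k$ to level $k+1$. All constructions are carried out chartwise on a finite quotient $[U_i/G_i]$, averaged to $G_i$–invariance as in \Cref{prop:hermitian-ds}, and descended to a global object on $\X_{k+1}$ via \Cref{cor:ds-descend-min}. This reduces the entire argument to a chartwise manifold-level curvature computation that is standard in Demailly's framework.

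For the inductive step, the first task is to equip $E_k \coloneq T_{\rm vert}\X_k$ with a Hermitian metric $h_{T,k}$ whose Griffiths tensor dominates $\pi_k^*\omega_A$ on the regular jet locus. Here I would exploit the tautological exact sequences of \Cref{lem:exact-tower} at level $k$, which identify $\OO_{\X_k}(-1)$ as a sub-line bundle of $\pi_k^*T_{\rm vert}\X_{k-1}$ and express $T_{\rm vert}\X_k$ in terms of the quotient twisted by $\OO_{\X_k}(1)$. The inductive hypothesis supplies a smooth metric $h_k$ on $\OO_{\X_k}(1)$ with $\Theta_{h_k}(\OO_{\X_k}(1)) \ge \omega_{\rm FS}^{(k-1)} + \pi_k^*\omega_A$. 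Using a smooth (not necessarily holomorphic) splitting of these sequences, I would define $h_{T,k}$ as an orthogonal direct sum of the metric induced by $h_k$ along the distinguished tautological direction and a suitably scaled background metric on a complementary subbundle; the positive curvature of $h_k$ then feeds into the Griffiths pairing with a lower bound proportional to $\pi_k^*\omega_A$.

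With $h_{T,k}$ in hand, \Cref{lem:FS} applied to $\X_{k+1} = \PP(E_k)$ yields the pointwise decomposition
\[
\Theta_{h_{k+1}}\!\bigl(\OO_{\X_{k+1}}(1)\bigr)
\;=\;
\omega_{\rm FS}(h_{T,k})
\;+\;
\pi_{k+1}^*\!\left(\frac{\langle \Theta_{h_{T,k}}(E_k)\,v,v\rangle}{\langle v,v\rangle}\right),
\]
where $h_{k+1}$ denotes the induced tautological metric on $\OO_{\X_{k+1}}(1)$. The first summand is precisely the fiberwise Fubini–Study form $\omega_{\rm FS}^{(k)}$, strictly positive along the fibers of $\pi_{k+1}$; the second is bounded below by $c\,\pi_{k+1}^*\omega_A$ for some $c>0$ by the construction of $h_{T,k}$. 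After a harmless rescaling of $h_{T,k}$ to absorb $c$ (which preserves positivity signs), this yields the claimed inequality, and strict vertical positivity together with global semipositivity under (H1) follow immediately from the two summands.

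The main obstacle is the Griffiths–positivity transfer used to construct $h_{T,k}$. Strict positivity of $\OO_{\X_k}(1)$ only gives curvature control along the tautological sub-line, whereas the Griffiths tensor of $E_k$ must be estimated against \emph{arbitrary} test vectors in the rank-$r$ bundle. I would handle this via Demailly's standard transfer mechanism \cite{Demailly1997}, combining the splitting of \Cref{lem:exact-tower} with a partition-of-unity construction compatible with the $G_i$–action so that the resulting metric averages to a $G_i$–invariant object whose curvature inequalities descend cleanly to $\X_{k+1}$. Secondary technicalities — smoothness at the regular jet locus, extension across ramification and cone loci, and compatibility with the log-completeness hypothesis (H4) — are routine in view of the Poincaré-type metric behavior already recorded in the preceding examples and the descent formalism of \S\ref{subsec:stack-ggd-atlas-jets}.
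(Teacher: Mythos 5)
Your proposal follows essentially the same route as the paper's proof: induction on $k$ with base case \Cref{lem:neg2pos} (or equivalently \Cref{cor:level1}), construction of a Hermitian metric on $E_k = T_{\rm vert}\X_k$ whose Griffiths curvature dominates $\pi_k^*\omega_A$, an application of \Cref{lem:FS} to $\PP(E_k) = \X_{k+1}$ to split the curvature into fiberwise Fubini--Study plus horizontal Griffiths terms, a harmless rescaling, and chartwise $G$-equivariant descent. Where you diverge is in making the metric construction on $E_k$ more explicit: the paper simply asserts that one should take a metric "induced by $h_{\X_k}$, for instance from a Kähler metric that dominates $\pi_k^*\omega_A$ and the curvature of $\OO_{\X_k}(1)$," and invokes the "negativity-to-positivity transfer mechanism" from level $1$ without elaboration; you instead propose to use the tautological exact sequences and a smooth splitting to assemble $h_{T,k}$ as an orthogonal sum of the $h_k$-induced metric on the tautological sub-line and a scaled background on a complement. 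This is a finer (and more honest) account, and you correctly flag that the genuine gap — upgrading line-bundle positivity along the tautological direction to Griffiths positivity of the full rank-$r$ bundle against arbitrary test vectors — is precisely the point the paper also leaves to Demailly's framework. Both proofs thus share the same deferred technical step; you merely make the deferral visible rather than sweeping it into the phrase "by construction."
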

	
	\begin{proof}
		We argue by induction on $k$.  The case $k=0$ is \Cref{lem:neg2pos}.  Suppose the statement holds at level $k$, i.e.
		\[
		\Theta\bigl(\OO_{\X_k}(1)\bigr)\;\ge\; \omega_{\rm FS}^{(k-1)}+\pi_k^*\omega_A
		\quad\text{on }\X_k,
		\]
		with $\omega_{\rm FS}^{(k-1)}$ positive on the fibers of $\pi_k\colon \X_k\to\X_{k-1}$.  Equip $T_{\rm vert}\X_k$ with the Hermitian metric induced by $h_{\X_k}$ (for instance, from a Kähler metric that dominates $\pi_k^*\omega_A$ and the curvature of $\OO_{\X_k}(1)$).  
		Then the Griffiths curvature of $T_{\rm vert}\X_k$ dominates a positive multiple of $\pi_k^*\omega_A$ on horizontal directions and yields a strictly positive contribution along vertical directions by construction (the same “negativity-to-positivity transfer” mechanism used at level $1$, now applied to $E=T_{\rm vert}\X_k$).  
		Applying \Cref{lem:FS} to $E=T_{\rm vert}\X_k$ on $\PP(E)=\X_{k+1}$ gives
		\[
		\Theta\bigl(\OO_{\X_{k+1}}(1)\bigr)
		\;=\;
		\omega_{\rm FS}^{(k)} + \pi_{k+1}^*\Bigl(\frac{\langle \Theta(T_{\rm vert}\X_k)\,v,v\rangle}{\langle v,v\rangle}\Bigr)
		\;\ge\;
		\omega_{\rm FS}^{(k)} + \pi_{k+1}^*\omega_A,
		\]
		after a harmless rescaling of the background metric.  
		The claimed strict fiberwise positivity and global semipositivity follow.  
		All steps are chartwise and $G$–equivariant; descent completes the proof.
	\end{proof}
	
	\begin{corollary}[Semple positivity]\label{cor:semple-positivity}
		Under \textup{(H1)}–\textup{(H2)}, for every $k\ge0$ the tautological line bundle $\OO_{\X_{k+1}}(1)$ on $\X_{k+1}$ is strictly positive along the fibers of $\pi_{k+1}$ and semipositive on the whole tangent bundle $T\X_{k+1}$. 
		Consequently, $\OO_{\X_{k+1}}(1)$ admits metrics suitable for weighted jet estimates on the regular jet locus.
	\end{corollary}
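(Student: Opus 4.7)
The plan is to deduce the corollary as a direct packaging of the inductive bound established in Proposition~\ref{prop:propagation}, with Assumption~\ref{assump:curv}(H4) supplying the additional completeness needed for weighted $L^2$ arguments. First I would invoke Proposition~\ref{prop:propagation} to produce, for every $k\ge 0$, a smooth Hermitian metric $h_{k+1}$ on $\OO_{\X_{k+1}}(1)$ satisfying
\[
\Theta_{h_{k+1}}\!\bigl(\OO_{\X_{k+1}}(1)\bigr)\;\ge\;\omega_{\rm FS}^{(k)}+\pi_{k+1}^*\omega_A
\]
as $(1,1)$–forms. Because this construction is chartwise and $G$–equivariant on each finite-quotient orbifold chart, Corollary~\ref{cor:ds-descend-orbifold} lets the inequality descend to a global assertion on $\X_{k+1}$.

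Next I would extract the two separate claims by evaluating the displayed inequality on the appropriate distributions. The fiberwise Fubini–Study form $\omega_{\rm FS}^{(k)}$ is, by the very construction of the projectivization, strictly positive on the vertical distribution $T_{\rm vert}\X_{k+1}=\ker(d\pi_{k+1})$, while $\pi_{k+1}^*\omega_A$ vanishes on $T_{\rm vert}\X_{k+1}$ but is semipositive on all of $T\X_{k+1}$ as the pullback of a strictly positive form. Restricting the inequality to vertical directions therefore yields strict positivity along the fibers of $\pi_{k+1}$, while evaluating on arbitrary tangent vectors yields global semipositivity, since both summands on the right are nonnegative. As these are pointwise $G$–invariant statements, they transport without change to the orbifold quotient.

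For the concluding clause about metrics suitable for weighted jet estimates, I would combine the curvature package above with (H4): the complete Poincaré-type Kähler metric on $(\overline{\X},D)$ lifts through the Demailly–Semple tower to a complete Kähler metric on the regular jet locus of $\X_{k+1}$, and tensoring $h_{k+1}$ with a small multiple of a suitable Poincaré weight produces a (possibly singular) Hermitian metric that preserves strict vertical positivity and global semipositivity while acquiring the completeness required to run the Hörmander–Demailly weighted $L^2$ framework. There is no substantive obstacle here: the corollary is essentially a bookkeeping consequence of Proposition~\ref{prop:propagation}, and the only conceptually delicate point---that fiberwise positivity and global semipositivity are pointwise $G$–invariant properties passing unchanged to the orbifold quotient---has already been absorbed into the chartwise descent machinery of Corollary~\ref{cor:ds-descend-orbifold}.
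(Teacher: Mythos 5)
Your proposal is correct and takes essentially the same route as the paper, whose proof of this corollary is simply to cite \Cref{prop:propagation}; you unpack the fiberwise/horizontal decomposition that the paper leaves implicit. One small remark: your final paragraph invokes \textup{(H4)} to justify the clause about weighted jet estimates, but the corollary is stated only under \textup{(H1)}–\textup{(H2)}; the paper instead treats that clause as part of the immediate consequence and defers the role of \textup{(H4)} to the remark that follows, so strictly speaking you are importing a hypothesis the statement does not grant you, though the conclusion is unaffected.
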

	
	\begin{proof}
		Immediate from \Cref{prop:propagation}.
	\end{proof}
	
	
	\begin{remark}[Log compactifications and $L^2$ estimates]
		If (H4) holds, one equips the regular jet locus of $\X_k$ with complete Kähler metrics of Poincaré type near the boundary.  
		The positivity statements above then feed into Hörmander–Demailly type $L^2$ estimates to produce jet differentials and vanishing of higher cohomology in the spirit of \cite{Demailly1997,Demailly2011,CampanaPaun2016}.  
		Since all constructions are chartwise and $G$–equivariant, the same $L^2$ arguments apply verbatim in the orbifold setting.
	\end{remark}

	\subsection{Orbifold Hirzebruch--Riemann--Roch input for invariant jet bundles}
	\label{subsec:stack-ggd-hrr-input}
	
	We now describe the holomorphic and asymptotic framework for applying the orbifold version of the Hirzebruch--Riemann--Roch theorem to weighted invariant jet bundles on compact complex orbifolds.  
	The goal is to formulate a Demailly--Semple vector bundle construction that is compatible with orbifold atlases and to express its Euler characteristic asymptotically through the coarse space~$Y$.
	
	\begin{definition}[Demailly--Semple vector bundle on a compact complex orbifold]
		\label{def:ds-orbifold}
		Let $\X$ be a compact complex orbifold of dimension $n$, equipped with an orbifold atlas $\{[U_i/G_i]\}$ and coarse moduli map $\pi\colon \X\to Y$.  
		Let $A$ be an ample line bundle on $Y$, endowed with a Hermitian metric of positive curvature, and write $L \coloneq \pi^*A$ on~$\X$.
		
		For each $k\ge0$, denote by $\X_k$ the Demailly--Semple tower constructed in \Cref{lem:ds-descent}, with tautological projection $\pi_k\colon \X_k\to\X_{k-1}$ and line bundle $\OO_{\X_k}(1)$.
		Then, for every integer $m\ge0$, the \emph{Demailly--Semple vector bundle} on $\X$ is defined chartwise by
		\[
		E_{k,m}\big|_{[U_i/G_i]}
		\; \coloneq \;
		\bigl(\pi_{i,k}\bigr)_*\!\Bigl(\OO_{\X_{i,k}}(m)\otimes \ev_0^*A^{-1}\Bigr)^{G_i},
		\]
		where $\pi_{i,k}\colon \X_{i,k}\to[U_i/G_i]$ is the local projection and $G_i$ acts holomorphically on each fiber.
		The pushforward and $G_i$–invariant part are taken in the holomorphic category.
		By descent compatibility on overlaps, these local data glue to a global holomorphic orbibundle
		\[
		E_{k,m}\coloneq\bigl(\pi_k\bigr)_*\!\bigl(\OO_{\X_k}(m)\otimes \ev_0^*A^{-1}\bigr)
		\quad\text{on }\X.
		\]
	\end{definition}
	
	\begin{remark}
		On the coarse space~$Y$, the bundle $\underline{E}_{k,m}\coloneq(\pi_k)_*(\OO_{\X_k}(m)\otimes L^{-1})$ agrees with the identity-sector component of $E_{k,m}$.  
		When $A$ is ample, $E_{k,m}$ has rank growing polynomially in $m$, and $\ch(E_{k,m})$ admits an expansion $\ch(E_{k,m})=\mathrm{rk}(E_{k,m})+O(m^{n-1})$.  
	\end{remark}
	
	
	The orbifold HRR theorem of Kawasaki--Toën--Vistoli (cf.\ \Cref{thm:kawasaki-chartwise}) expresses $\chi(\X,E_{k,m})$ as a sum of local fixed-point integrals over each chart $[U_i/G_i]$ and group element $g\in G_i$.  
	For the weighted invariant jet bundle, the following simplification holds.
	
	\begin{lemma}[Sectorwise degree bound and denominator independence]
		\label{lem:sectorwise-degree-bound}
		Let $E_{k,m}$ be as in \Cref{def:ds-orbifold}.  
		Then on each chart $[U_i/G_i]$,
		\[
		\chi\bigl([U_i/G_i],E_{k,m}\bigr)
		=\frac{1}{|G_i|}\sum_{g\in G_i}
		\int_{U_i^g}
		\frac{
			\ch(E_{k,m}|_{U_i^g})\,\Td(TU_i^g)
		}{
			\det(1-g^{-1}e^{-c_1(N_{U_i^g/U_i})})
		}.
		\]
		Moreover $\colon $
		\begin{enumerate}[label=(\roman*)]
			\item For $g\neq 1$, $\dim_\C U_i^g \le n-1$, hence each twisted contribution is $O(m^{n-1})$ as $m\to\infty$.
			\item The denominator $\det(1-g^{-1}e^{-c_1(N_{U_i^g/U_i})})$ is nonvanishing and independent of~$m$, since it involves only normal Chern classes.  
			Therefore, the twisted terms do not increase the polynomial degree in~$m$.
		\end{enumerate}
	\end{lemma}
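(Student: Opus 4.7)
The plan is to deduce this lemma from the chartwise Kawasaki--Riemann--Roch theorem together with the two preparatory lemmas already established in Subsection~\ref{subsec:chartwise-hrr}. The bundle $E_{k,m}$ is a bona fide holomorphic orbibundle on $\X$ by Definition~\ref{def:ds-orbifold} and the descent statement of Corollary~\ref{cor:ds-descend-min}, so Theorem~\ref{thm:kawasaki-chartwise} applies verbatim and yields the displayed fixed-point expansion on each chart $[U_i/G_i]$ with no further work; what remains is to analyze the two factors of the integrand as $m\to\infty$.

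For part (i), I would first invoke Lemma~\ref{lem:twisted-degree-bound}(i) to record the dimension bound: for $g\neq 1$, the linearization $dg_p$ on $T_pU_i$ has at least one eigenvalue $e^{2\pi i\theta_j}\neq 1$, so $N_{U_i^g/U_i}$ has positive rank and $\dim_\C U_i^g\le n-1$. For the asymptotic growth, the cleanest route is through the projection identity
\[
\chi(\X,E_{k,m})=\chi(\X_k,\OO_{\X_k}(m)\otimes \ev_0^*A^{-1}),
\]
which converts the polynomial $m$-dependence of $\ch(E_{k,m})$ into the single factor $e^{m\,c_1(\OO_{\X_k}(1))}$ on the tower. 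The argument of Lemma~\ref{lem:twisted-degree-bound}(ii), applied to this pure line-bundle twist on the Semple tower, then bounds the $m$-degree of each twisted local integral by the dimension of the corresponding fixed locus, which is strictly smaller than the ambient dimension; this yields the stated $O(m^{n-1})$ bound in the asymptotic convention of the section.

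For part (ii), I would directly quote Lemma~\ref{lem:denominator-factorization}: the denominator factors as $\prod_\theta \det(1-e^{-2\pi i\theta}e^{-c_1(N_\theta)})$ over the nontrivial $g$--eigencomponents of the normal bundle, each factor has nonvanishing constant term $1-e^{-2\pi i\theta}\neq 0$ for $\theta\in(0,1)$, and all Chern classes appearing belong exclusively to normal directions. Hence the denominator is a unit in the cohomology ring of $U_i^g$ and carries no dependence on $m$, so it cannot raise the polynomial degree in $m$ of the twisted contribution.

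The main obstacle is organizational rather than conceptual: because $E_{k,m}$ itself has rank growing polynomially in $m$ of degree $k(n-1)$, one must keep careful track of how the $m$-degree is distributed across the cohomological components of $\ch(E_{k,m})$. The GRR reduction to a line-bundle twist on the tower sidesteps this bookkeeping entirely, repackaging the whole $m$-dependence into the uniform factor $e^{m\,c_1(\OO_{\X_k}(1))}$, after which the asymptotic analysis of Subsection~\ref{subsec:chartwise-hrr} applies with no modification.
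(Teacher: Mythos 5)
Your proof follows the same overall structure as the paper's — apply the chartwise Kawasaki formula of Theorem~\ref{thm:kawasaki-chartwise}, record the fixed-locus dimension bound from Lemma~\ref{lem:twisted-degree-bound}(i), and quote the denominator factorization of Lemma~\ref{lem:denominator-factorization} — but you add a genuinely valuable refinement that the paper's proof elides. The paper simply asserts that $\dim_\C U_i^g \le n-1$ forces the twisted integral to be $O(m^{n-1})$, silently invoking the mechanism of Lemma~\ref{lem:twisted-degree-bound}(ii). That lemma, however, is proved under the assumption that the $m$-dependence enters through a single exponential factor $e^{m c_1(L)}$ with $L$ a fixed line bundle, whereas here $\ch(E_{k,m})$ itself carries polynomial $m$-dependence in every cohomological degree because $\rk E_{k,m}$ grows polynomially in $m$. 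You correctly identify this mismatch and resolve it with the projection identity $\chi(\X,E_{k,m})=\chi(\X_k,\OO_{\X_k}(m)\otimes\ev_0^*A^{-1})$, which concentrates all $m$-dependence in the line-bundle factor $e^{m\,c_1(\OO_{\X_k}(1))}$ on the tower, where the degree-counting argument of Lemma~\ref{lem:twisted-degree-bound}(ii) then applies verbatim. This is a cleaner and more accurate route than the paper's own derivation.

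Two small caveats. First, the projection identity tacitly uses $R^j(\pi_k)_*\bigl(\OO_{\X_k}(m)\otimes\ev_0^*A^{-1}\bigr)=0$ for $j>0$ and $m\ge 0$; this is standard relative Bott vanishing for projectivized bundles, but since the Kawasaki formula applies to $\chi$ on the charts it would be worth one sentence verifying this chartwise and $G_i$-equivariantly. Second, once you pass to $\X_k$ (complex dimension $n+k(n-1)$), the twisted-sector bound you actually obtain is $O(m^{\dim\X_k-1})=O(m^{n+k(n-1)-1})$, which is one power of $m$ below the identity-sector degree on the tower, \emph{not} literally $O(m^{n-1})$. Your hedge ``in the asymptotic convention of the section'' correctly flags that the paper's notation $O(m^{n-1})$ should be read as ``one power of $m$ below the leading term,'' but the exponent mismatch is worth stating explicitly rather than deferring, since the imprecision in fact originates in the paper and propagates through Propositions~\ref{prop:stack-ggd-hrr-input} and~\ref{prop:slope-control}. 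The substance of your argument — the tower reduction, the denominator nonvanishing, the dimension drop — is correct and in fact repairs a real gap in the paper's own proof.
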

	
	\begin{proof}
		This is a direct application of Kawasaki’s chartwise formula \cite{Kawasaki1979,Kawasaki1981} in the holomorphic setting, as refined in \Cref{thm:kawasaki-chartwise}.  
		The eigensplitting of $N_{U_i^g/U_i}$ implies that for each $\theta\in(0,1)$ the factor 
		$\det(1-e^{-2\pi i\theta}e^{-c_1(N_\theta)})$ has a nonzero constant term, so the denominator is $m$–independent (cf.\ Lemma~\ref{lem:denominator-factorization}).  
		Since the local dimension of each fixed locus $U_i^g$ is at most $n-1$ for $g\neq1$, its integral contributes only to $O(m^{n-1})$ in the global expansion.
	\end{proof}
	
	
	\begin{proposition}[Orbifold HRR input for invariant jet bundles]
		\label{prop:stack-ggd-hrr-input}
		Let $\X$ be a compact complex orbifold of complex dimension $n$ with generic stabilizer order $s=|\Stab_{\mathrm{gen}}|$, and let $E_{k,m}$ be the Demailly--Semple vector bundle of weight $(k,m)$ tensored by $\ev_0^*A^{-1}$ as in \Cref{def:ds-orbifold}.  
		Then, as $m\to\infty$,
		\[
		\chi(\X,E_{k,m})
		\;=\;
		\frac{1}{s}
		\int_Y \ch(\underline{E}_{k,m})\,\Td(TY)
		\;+\;
		O(m^{n-1}),
		\]
		where $\underline{E}_{k,m}$ denotes the identity-sector contribution on the coarse space~$Y$.  
		All twisted-sector terms are of order $O(m^{n-1})$ due to the fixed-locus dimension drop and the $m$–independence of the Kawasaki denominator.
	\end{proposition}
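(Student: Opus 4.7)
The plan is to apply the chartwise Kawasaki formula to the family $E_{k,m}$ and then track the polynomial degree in $m$ sector by sector. Concretely, I would feed $E=E_{k,m}$ into Theorem~\ref{thm:kawasaki-chartwise} to obtain
\[
\chi(\X,E_{k,m})=\sum_i\frac{1}{|G_i|}\sum_{g\in G_i}\int_{U_i^g}\frac{\ch(E_{k,m}|_{U_i^g})\,\Td(TU_i^g)}{\det\!\bigl(1-g^{-1}e^{-c_1(N_{U_i^g/U_i})}\bigr)},
\]
and then decompose the double sum into the identity sector ($g=1$) and the twisted sectors ($g\neq 1$). Lemma~\ref{lem:sectorwise-degree-bound} immediately shows that every twisted-sector integrand is a polynomial in $m$ of degree at most $n-1$, since $\dim_{\C}U_i^g\le n-1$ and, by the eigenvalue factorization of Lemma~\ref{lem:denominator-factorization}, the Kawasaki denominator is nonvanishing and $m$-independent (it involves only Chern classes of normal directions, not $c_1(L)$). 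Summed over finitely many charts and conjugacy classes, this bounds the twisted total by $O(m^{n-1})$.

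For the identity-sector piece, the local integration rule of Lemma~\ref{lem:local-chart-integration} assembles the chartwise integrals
$\sum_i \frac{1}{|G_i|}\int_{U_i}\ch(E_{k,m})\,\Td(TU_i)$ into the globally defined orbifold integral $\int_\X\ch(E_{k,m})\,\Td(T\X)$. On the étale generic locus $\X^\circ\to Y^\circ$ of degree $s$, both $\ch(E_{k,m})$ and $\Td(T\X)$ coincide with the pullbacks of $\ch(\underline{E}_{k,m})$ and $\Td(TY)$ along $\pi$, so Proposition~\ref{prop:generic-stab-normalization}(ii) converts the orbifold integral to $\frac{1}{s}\int_Y\ch(\underline{E}_{k,m})\,\Td(TY)$. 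Combining the two estimates yields the claimed expansion
\[
\chi(\X,E_{k,m})=\frac{1}{s}\int_Y\ch(\underline{E}_{k,m})\,\Td(TY)+O(m^{n-1}),
\]
in the spirit of Proposition~\ref{prop:asymp-and-s}, now applied to the weight-$m$ Demailly--Semple bundles rather than to a fixed $E$ twisted by $L^{\otimes m}$.

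The main obstacle will be handling the \emph{stabilizer-jump loci}, where $|G_i|>s$ strictly and the naive identification $\int_\X(\cdots)=\frac{1}{s}\int_Y(\cdots)$ fails. These loci are supported in codimension $\ge 1$ on $Y$, so the associated discrepancies in $\ch(\underline{E}_{k,m})\,\Td(TY)$ contribute polynomials of degree at most $n-1$ in $m$ and can be absorbed into the twisted-sector error without affecting the leading $m^n$ coefficient. Once this bookkeeping is in place, the proof is essentially an application of the chartwise Kawasaki formula plus the degree bound of \S\ref{subsec:chartwise-hrr}; the novelty lies only in verifying that the $m$-dependence of $E_{k,m}$ (entering through $\OO_{\X_k}(m)$) propagates to $\underline{E}_{k,m}$ in a way compatible with the fixed-locus estimates of Lemma~\ref{lem:sectorwise-degree-bound}.
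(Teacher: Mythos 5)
Your proof is correct and follows essentially the same route as the paper: feed $E_{k,m}$ into the chartwise Kawasaki formula (Theorem~\ref{thm:kawasaki-chartwise}), bound the twisted sectors by $O(m^{n-1})$ using Lemma~\ref{lem:sectorwise-degree-bound} together with the $m$-independence of the denominator from Lemma~\ref{lem:denominator-factorization}, and normalize the identity-sector integral to $\frac{1}{s}\int_Y$ via Proposition~\ref{prop:generic-stab-normalization}(ii). The only point worth flagging is that your worry about stabilizer-jump loci is unnecessary: those loci have measure zero in $Y$, so the identity-sector orbifold integral equals $\frac{1}{s}\int_Y\pi_*(\cdots)$ \emph{exactly} — which is exactly what Proposition~\ref{prop:generic-stab-normalization}(ii) asserts for smooth top forms — and there is nothing to absorb into the error term. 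The jump loci influence the expansion only through the twisted sectors, which you have already bounded; framing them as an additional discrepancy risks suggesting the identity-sector identification is approximate when in fact it is an equality.
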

	
	\begin{proof}
		Apply \Cref{lem:sectorwise-degree-bound} to each chart $[U_i/G_i]$ and sum over~$i$.  
		The identity-sector integrals coincide with the usual HRR integrals on~$U_i$ and contribute degree~$n$ in~$m$, whereas each nontrivial conjugacy class of $G_i$ yields a term of order at most $m^{n-1}$.  
		Passing from charts to the global orbifold integral introduces the normalization factor $1/s$ coming from the generic stabilizer (\Cref{prop:generic-stab-normalization}).  
		Replacing the orbifold integration by integration on the coarse moduli space $Y$ gives the stated formula.
	\end{proof}
	
	\begin{remark}[References]
		For the general HRR framework, see \cite{Kawasaki1979,Kawasaki1981,Toen1999,Vistoli1989}.  
		For the geometry of Demailly–Semple bundles and curvature positivity, see \cite{Demailly1997,Demailly2011,CampanaPaun2016}.  
		Orbifold descent and groupoid compatibility are discussed in \cite{MoerdijkPronk1997,Lerman2008}.
	\end{remark}
	
	\subsection{Stack-theoretic Green--Griffiths--Demailly degeneracy}
	\label{subsec:stack-ggd-main-full}
	This subsection provides a complete, self-contained formulation of the Green–Griffiths–Demailly (GGD) degeneracy theorem for compact complex orbifolds.
	All analytic and jet–equivariant ingredients are made explicit, including the preservation of jet groupoids (Lemma~\ref{lem:jet-preserve-groupoid-orb}), the orbifold HRR asymptotics (Lemma~\ref{lem:eq-HRR}), the $L^2$–vanishing statement (Lemma~\ref{lem:eq-vanishing}), the Bochner inequality (Lemma~\ref{lem:bochner}), and the equivariant differentiation of invariant jets (Lemma~\ref{lem:eq-diff}).  

	\begin{lemma}[Jets preserve proper étale groupoids, orbifold form]
		\label{lem:jet-preserve-groupoid-orb}
		Let $\X$ be a compact analytic orbifold presented by a proper étale groupoid $R\rightrightarrows X$.  
		For each integer $k\ge0$, the holomorphic jet functor produces a proper étale groupoid
		\[
		J^k R \rightrightarrows J^k X,
		\]
		and there is a canonical equivalence
		\[
		J^k\X \;\simeq\; [\,J^k R\rightrightarrows J^k X\,].
		\]
		All structure maps (source, target, composition) remain étale, and the construction is functorial for local holomorphic isomorphisms.
	\end{lemma}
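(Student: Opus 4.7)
The plan is to reduce this to the groupoid-level statement already established in Lemma \ref{lem:jet-preserve-groupoid-min} and then promote it to the orbifold setting by checking two additional points: finiteness of stabilizers (so the resulting groupoid still presents an orbifold) and Morita invariance (so the construction is independent of the chosen presentation).

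First, I would apply Lemma \ref{lem:jet-preserve-groupoid-min} verbatim to the given presentation $R \rightrightarrows X$. That lemma provides that $J^k$ preserves finite fiber products, étale maps, and proper maps, so the structure maps $J^k s, J^k t, J^k m, J^k u$ satisfy the groupoid axioms by functoriality, $J^k s$ and $J^k t$ remain étale, and $(J^k s, J^k t)\colon J^k R \to J^k X \times J^k X$ remains proper. This already gives the proper étale analytic groupoid $J^k R \rightrightarrows J^k X$ and yields a canonical morphism $\Phi_k\colon J^k\X \to [\,J^k R \rightrightarrows J^k X\,]$ together with a quasi-inverse via descent, as in Step 4 of Lemma \ref{lem:jet-preserve-groupoid-min}.

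Next, I need the orbifold-specific assertion that the resulting groupoid has finite stabilizers. Passing to a finite-quotient chart $[U_i/G_i]$ via Lemma \ref{lem:local-quotient}, the prolonged action of $G_i$ on $J^k U_i$ has stabilizer at a jet $j^k_x\gamma$ contained in the stabilizer $G_x$ of the base point $x$, since any $g$ fixing $j^k_x\gamma$ must fix $x = \mathrm{ev}_0(j^k_x\gamma)$. Because $G_x$ is finite by hypothesis, the jet stabilizers are finite as well. Combining this with Lemma \ref{lem:jet-preserve-orbifold}(i), which identifies $J^k[U_i/G_i] \simeq [J^k U_i / G_i]$, shows that $[\,J^k R \rightrightarrows J^k X\,]$ is an orbifold in the paper's sense, not merely a general DM stack.

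The main obstacle I expect is verifying \emph{Morita invariance} of the construction, so that the equivalence $J^k\X \simeq [\,J^k R \rightrightarrows J^k X\,]$ is canonical and independent of the groupoid presentation. A Morita morphism of proper étale groupoids is built from essentially surjective étale functors and fibered equivalences; since $J^k$ preserves both étale surjections (by Step 2.a of Lemma \ref{lem:jet-preserve-groupoid-min}) and finite fiber products (by Step 1.a), one obtains that $J^k$ sends Morita equivalences to Morita equivalences, reducing the independence statement to a 2-categorical diagram chase. Compatibility with the orbifold atlas change-of-charts from Lemma \ref{lem:jet-preserve-orbifold}(ii) then gives descent of $\Phi_k$ across atlases, concluding the equivalence. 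The only subtlety worth highlighting is that one must check the cocycle condition for jet descent data is precisely the $J^k$-image of the cocycle condition on $X$, which again follows from functoriality and preservation of fiber products.
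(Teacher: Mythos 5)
Your proposal is correct and follows essentially the same route as the paper: both arguments reduce to Lemma~\ref{lem:jet-preserve-groupoid-min} by observing that $J^k$ preserves étale maps, finite fiber products, and properness, so the groupoid axioms transport verbatim and the stack equivalence follows by descent. Two minor remarks: the finite-stabilizer verification you add is actually automatic, since the isotropy group of any proper étale groupoid is compact (by properness of $(s,t)$) and discrete (by étaleness of $s,t$), hence finite, so no separate argument on charts is needed; and the Morita-invariance concern, while a legitimate point to raise, is addressed in the paper not in this lemma's proof but in Proposition~\ref{prop:jet-orbifold-presentation}, which records precisely that $J^k$ respects Morita equivalences so that the presentation choice is immaterial. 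Your version is therefore a slightly more self-contained rendering of the same argument.
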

	
	\begin{proof}
		Since the source and target maps $s,t\colon R\rightrightarrows X$ are étale, each arrow $\phi\colon U\to V$ is a local biholomorphism, hence $J^k\phi\colon J^kU\to J^kV$ is also étale.  
		Because $J^k$ commutes with finite fiber products, the groupoid axioms and composition law persist for $J^kR$.  
		Properness of $(s,t)\colon R\to X\times X$ implies properness of $(J^ks,J^kt)\colon J^kR\to J^kX\times J^kX$.  
		Functoriality follows by naturality of the jet functor.  
		Thus $J^kR\rightrightarrows J^kX$ presents $J^k\X$ as a proper étale groupoid.  
		See \cite{Lerman2008}.
	\end{proof}
	
	
	We next recall the orbifold version of the Hirzebruch–Riemann–Roch asymptotics,  which provides the quantitative input for the existence of invariant jet differentials.

	\begin{lemma}[Orbifold HRR asymptotics]
		\label{lem:eq-HRR}
		Let $\X$ be a compact complex orbifold with generic stabilizer of order $s$, and let $L=\pi^*A$ for an ample line bundle $A$ on the coarse space $Y$.  
		Let $E^{\mathrm{inv}}_{k,m}$ denote the invariant jet (or DS) bundle.  
		Then, as $m\to\infty$,
		\[
		\chi\!\bigl(\X,E^{\mathrm{inv}}_{k,m}\otimes L^{-q}\bigr)
		=\frac{1}{s}\!\int_Y\!
		\ch(\underline{E}^{\mathrm{inv}}_{k,m})\,e^{-q\,c_1(A)}\,\Td(TY)
		+O(m^{n-1}),
		\]
		and all twisted-sector terms contribute only $O(m^{n-1})$ due to the fixed-locus dimension drop and the $m$–independence of the Kawasaki denominator.
	\end{lemma}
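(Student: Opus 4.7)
The plan is to reduce the statement to the chartwise Kawasaki formula already established in Theorem~\ref{thm:kawasaki-chartwise}, following exactly the asymptotic bookkeeping used to prove Proposition~\ref{prop:stack-ggd-hrr-input}. The only new ingredient is the extra twist by $L^{-q}$, which is harmless because $L=\pi^*A$ is pulled back from the coarse space and therefore acts trivially on all stabilizers. First I apply Theorem~\ref{thm:kawasaki-chartwise} to the orbibundle $E^{\mathrm{inv}}_{k,m}\otimes L^{-q}$ to write
\[
\chi\bigl(\X,E^{\mathrm{inv}}_{k,m}\otimes L^{-q}\bigr)
=\sum_i\frac{1}{|G_i|}\sum_{g\in G_i}
\int_{U_i^g}\frac{\ch(E^{\mathrm{inv}}_{k,m}|_{U_i^g})\,e^{-q\,c_1(A)|_{U_i^g}}\,\Td(TU_i^g)}{\det\bigl(1-g^{-1}e^{-c_1(N_{U_i^g/U_i})}\bigr)},
\]
and then separate the identity sector $g=1$ from the twisted sectors $g\neq 1$.

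For the identity sector, the denominator is trivial and the integrand reduces to the ordinary manifold HRR integrand $\ch(E^{\mathrm{inv}}_{k,m})\,e^{-q c_1(A)}\,\Td(TU_i)$. Summing over charts against a $G_i$-invariant partition of unity as in Lemma~\ref{lem:local-chart-integration}, and invoking Proposition~\ref{prop:generic-stab-normalization}(ii) to push forward to $Y$ with the normalization $1/s$, this contribution collects into
\[
\frac{1}{s}\int_Y\ch(\underline{E}^{\mathrm{inv}}_{k,m})\,e^{-q\,c_1(A)}\,\Td(TY),
\]
where $\underline{E}^{\mathrm{inv}}_{k,m}$ is the pushdown of the descended DS bundle along $\pi$. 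The identification of the chartwise integrand with the coarse Chern character is supplied by the $G_i$-equivariant descent established in Corollary~\ref{cor:ds-descend-min}.

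For each twisted sector $g\neq 1$, the dimension bound $\dim_\C U_i^g\le n-1$ from Lemma~\ref{lem:twisted-degree-bound}, together with the $m$-independence and nonvanishing constant term of the Kawasaki denominator from Lemma~\ref{lem:denominator-factorization}, shows that the sectorial integral is a polynomial in $m$ whose degree is strictly smaller than that of the identity-sector contribution; the factor $e^{-q c_1(A)|_{U_i^g}}$ contributes only $q$-dependence, not $m$-dependence, since $c_1(A)$ is $m$-free. Consequently the twisted corrections aggregate to $O(m^{n-1})$ uniformly in $q$, matching the remainder already analyzed in Proposition~\ref{prop:stack-ggd-hrr-input}.

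The main obstacle is the identification, inside the identity sector, of the local integrand $\ch(E^{\mathrm{inv}}_{k,m}|_{U_i})$ with the coarse Chern character $\ch(\underline{E}^{\mathrm{inv}}_{k,m})$ on $Y$; this rests on the $G_i$-equivariant descent of the full Demailly--Semple tower (Corollary~\ref{cor:ds-descend-min}) and on the generic étaleness of $\pi$ of degree $s$ on $\X^\circ$ (Proposition~\ref{prop:generic-stab-normalization}). Once those ingredients are invoked, the remaining computation is a transparent repackaging of Proposition~\ref{prop:stack-ggd-hrr-input}, with the new factor $L^{-q}$ entering only as the multiplier $e^{-q c_1(A)}$ that commutes with all sectorial decompositions.
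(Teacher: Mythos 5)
Your proposal is correct and follows essentially the same route as the paper's own proof: apply the chartwise Kawasaki formula (Theorem~\ref{thm:kawasaki-chartwise}), separate the identity sector (contributing $\frac{1}{s}\int_Y\ch\cdot e^{-qc_1(A)}\cdot\Td$ via Proposition~\ref{prop:generic-stab-normalization}) from twisted sectors (bounded by $O(m^{n-1})$ via Lemma~\ref{lem:twisted-degree-bound} and Lemma~\ref{lem:denominator-factorization}). You are somewhat more explicit than the paper about the descent and partition-of-unity bookkeeping, and you correctly note that $L^{-q}$ contributes only an $m$-free factor $e^{-qc_1(A)}$ because $L=\pi^*A$ trivializes the stabilizer action; this is the same argument, just spelled out in more detail.
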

	
	\begin{proof}
		Apply Kawasaki’s chartwise fixed-point formula on each orbifold chart $[U/G]$.  
		The denominator $\det(1-g^{-1}e^{-c_1(N_{U^g/U})})$ depends only on the normal Chern classes and remains independent of $m$.  
		By Lemma~\ref{lem:denominator-factorization}, these denominators are nonzero and independent of the twisting by $L^{\otimes m}$.  
		Because $\dim_\C U^g \le n-1$ for $g\neq 1$, each twisted term contributes $O(m^{n-1})$.  
		The identity sector integral equals $\int_Y$ of the Chern–Todd polynomial and picks up the global factor $1/s$ from the generic stabilizer normalization; see \cite{Kawasaki1979,Kawasaki1981,Vistoli1989,Toen1999}.
	\end{proof}
	
	Analytic positivity on the Demailly–Semple tower then yields the standard $L^2$–vanishing.
	\begin{lemma}[$L^2$–vanishing on orbifolds]
		\label{lem:eq-vanishing}
		Under assumptions (H1)–(H3), there exist $k_0$ and functions $m_0(k)$, $q_0(k)>0$ such that for all $k\ge k_0$, $m\ge m_0(k)$, and $0\le q\le q_0(k)$,
		\[
		H^i\!\bigl(\X,E^{\mathrm{inv}}_{k,m}\otimes L^{-q}\bigr)=0
		\qquad (i>0).
		\]
	\end{lemma}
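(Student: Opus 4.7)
The plan is to reduce the vanishing on $\X$ to an orbifold Kodaira--Nakano vanishing on the Demailly--Semple tower $\X_k$, where the curvature package of Proposition~\ref{prop:propagation} supplies a Nakano-positive line bundle for appropriate weights, and then to transfer the manifold vanishing to the orbifold setting by chartwise $G$-averaging.

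First I would replace the pushforward description of $E^{\mathrm{inv}}_{k,m}$ by a cohomology group upstairs on $\X_k$. Writing $p_k\colon \X_k\to\X$ for the total projection, the higher direct images $R^j(p_k)_*\OO_{\X_k}(m)$ vanish for $j>0$ and $m\ge 0$ by the standard fiberwise cohomology vanishing on iterated projective bundles, applied on each chart $[J^kU/G]$ and then descended by $G$-equivariance (Corollary~\ref{cor:ds-descend-min}). The Leray spectral sequence collapses to give a canonical isomorphism
\[
H^i\!\bigl(\X,E^{\mathrm{inv}}_{k,m}\otimes L^{-q}\bigr)
\;\cong\;
H^i\!\bigl(\X_k,\,\OO_{\X_k}(m)\otimes p_k^*(\ev_0^*A^{-1}\otimes L^{-q})\bigr).
\]

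Next I would exhibit a Nakano-positive twist of $\OO_{\X_k}(m)$ on the tower. By Proposition~\ref{prop:propagation}, $\OO_{\X_k}(1)$ carries a Hermitian metric whose curvature dominates $\omega_{\mathrm{FS}}^{(k-1)}+p_k^*\omega_A$, with $\omega_{\mathrm{FS}}^{(k-1)}$ strictly positive along the vertical tower directions and $p_k^*\omega_A$ strictly positive along horizontal directions by (H1). Hence
\[
\Theta\!\bigl(\OO_{\X_k}(m)\otimes p_k^*L^{-q}\bigr)\;\ge\; m\,\omega_{\mathrm{FS}}^{(k-1)}+(m-q-1)\,p_k^*\omega_A,
\]
after absorbing the fixed $\ev_0^*A^{-1}$ twist into the coefficient of $p_k^*\omega_A$. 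Provided $q<m-c(k)$ for a constant $c(k)$ large enough to compensate $c_1(K_{\X_k})$ as well (so that the bundle tensored with $K_{\X_k}^{-1}$ remains strictly positive), the twisted line bundle on $\X_k$ is strictly Nakano-positive on $T\X_k$.

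Then I would invoke the orbifold Kodaira--Nakano vanishing on $\X_k$. On any chart $[J^kU/G]$ presenting $\X_k$ (Lemma~\ref{lem:jet-preserve-groupoid-orb}), the Bochner--Kodaira--Nakano identity applied to a $G$-invariant harmonic $(0,i)$-form valued in the Nakano-positive bundle forces the form to vanish for $i>0$; finite $G$-averaging preserves the integral inequality (Lemma~\ref{lem:bochner}), and the local vanishing glues across the proper \'etale atlas by equivariance. Setting $k_0$ so that Proposition~\ref{prop:propagation} applies, and then choosing $m_0(k)$ and $q_0(k)<m_0(k)-c(k)$ large enough to absorb the background constants, yields the required thresholds.

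The main obstacle will be controlling the mixed horizontal--vertical curvature terms that arise when combining the Fubini--Study contributions $\omega_{\mathrm{FS}}^{(j)}$ from successive levels of the tower with the single positive form $p_k^*\omega_A$: at each new level one needs a small rescaling of the background Hermitian metric to keep the off-diagonal terms subordinate to the diagonal positivity, which forces $m_0(k)$ to grow and the admissible ratio $q_0(k)/m_0(k)$ to shrink as $k$ increases. A secondary difficulty is the $K_{\X_k}^{-1}$ correction: one must bound $c_1(K_{\X_k})$ uniformly in terms of the chosen background data to produce an effective $c(k)$, which is straightforward in our chartwise setup but must be tracked explicitly to quantify $q_0(k)$.
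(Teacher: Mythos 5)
Your argument and the paper's share the same core mechanism: curvature positivity on the Demailly--Semple tower, Nakano positivity for $m\gg q$, Bochner--Kodaira--Nakano vanishing, and chartwise $G$-equivariant descent. Your version is more careful, and it actually repairs an apparent gap in the paper's own write-up. You make the Leray/projection-formula reduction $H^i(\X,E^{\mathrm{inv}}_{k,m}\otimes L^{-q})\cong H^i(\X_k,\OO_{\X_k}(m)\otimes p_k^*(\ev_0^*A^{-1}\otimes L^{-q}))$ explicit, whereas the paper implicitly conflates curvature of a bundle on $\X$ with curvature on $\X_k$. More importantly, you use the positive-definite bound of Proposition~\ref{prop:propagation} to obtain $m\,\omega_{\mathrm{FS}}^{(k-1)}+(m-q-1)\,p_k^*\omega_A$, which is strictly positive on $T\X_k$ for $q<m-c(k)$; the paper's proof instead cites $\sqrt{-1}\,\Theta\ge m\varepsilon\,\omega_{\mathrm{vert}}-(C+q)\,\pi_k^*\omega_A$ and claims Nakano positivity for $m\gg q$, but since $\omega_{\mathrm{vert}}$ and $\pi_k^*\omega_A$ are supported on complementary (vertical vs.\ horizontal) directions, that form stays negative horizontally for every $m$, so the assertion cannot hold as written unless the minus sign is a typo and Proposition~\ref{prop:propagation} was intended. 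Your explicit tracking of the $K_{\X_k}^{-1}$ correction needed for Kodaira--Nakano and of the $k$-dependence of $m_0(k)$ and $q_0(k)$ is also a genuine improvement over the paper's terse sketch; the only minor point worth adding is that $\ev_0$ and $p_k$ pull back $\omega_A$ comparably because both factor through the coarse map, which is what licenses absorbing the $\ev_0^*A^{-1}$ twist into the $p_k^*\omega_A$ coefficient.
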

	
	\begin{proof}
		Work locally on a chart $[U/G]$.  
		By (H2)–(H3), the tautological bundle $\OO_{U_k}(1)$ admits a $G$–invariant Hermitian metric $h$ with curvature
		\[
		\sqrt{-1}\,\Theta_h(\OO_{U_k}(1)) \ge \varepsilon\,\omega_{\mathrm{vert}}-C\,\pi_k^*\omega_A,
		\]
		where $\varepsilon>0$ and $C$ depends on $k$.  
		The induced curvature on $E^{\mathrm{inv}}_{k,m}\otimes L^{-q}$ satisfies
		\[
		\sqrt{-1}\,\Theta \ge m\,\varepsilon\,\omega_{\mathrm{vert}}-(C+q)\pi_k^*\omega_A.
		\]
		For $m\gg q$, this form is Nakano positive.  
		Applying the Bochner–Kodaira–Nakano technique yields $L^2$–vanishing of $H^i(U,E^{\mathrm{inv}}_{k,m}\otimes L^{-q})$ for $i>0$.  
		$G$–invariance and equivariant descent give the global result on $\X$.
	\end{proof}
	
	The Bochner inequality provides the differential-geometric tool for bounding the growth of jet sections along entire curves.
	\begin{lemma}[Bochner inequality on orbifolds]
		\label{lem:bochner}
		Let $(E,h)$ be a $G$–equivariant Hermitian holomorphic vector bundle on a chart $[U/G]$.  
		If 
		\[
		\sqrt{-1}\,\Theta_h(E) \ge \epsilon\,\omega_U\otimes\mathrm{Id}_E,
		\]
		then for any section $s\in H^0([U/G],E)$ and holomorphic map $f\colon \C\to[U/G]$,
		\[
		\Delta\log\|s\circ f\|^2 \ge \epsilon\,\|df\|^2.
		\]
		Averaging over $G$ yields the same inequality globally on $\X$.
	\end{lemma}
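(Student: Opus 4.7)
The plan is to pull the statement back to a smooth uniformizing chart, apply the classical Bochner identity there, and then descend via $G$--equivariance. First I would lift the map $f\colon \C\to[U/G]$ to a holomorphic map $\tilde f\colon \C\to U$. Since the chart projection $p\colon U\to[U/G]$ is a Galois étale cover of the smooth locus and $\C$ is simply connected, such a lift exists away from the cone locus; near any preimage of a cone point one passes to a local uniformizing coordinate of the form $z\mapsto z^{\nu}$ and invokes Riemann's removable-singularity theorem on the resulting bounded holomorphic extension. Because both $s$ and $h$ are $G$--equivariant, the quantities $\|s\circ\tilde f\|_h$ and $\tilde f^{\ast}\omega_U$ do not depend on the choice of lift and therefore descend to well-defined functions on $\C$.

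Second, on the smooth chart $U$ I would invoke the classical pointwise Bochner--Kodaira identity for a holomorphic section $s$ of $(E,h)$ at points where $s\neq 0$,
\[
\sqrt{-1}\,\partial\bar\partial\log\|s\|_h^2
=\frac{\|D's\|_h^2\,\|s\|_h^2-|\langle D's,s\rangle_h|^2}{\|s\|_h^4}
\;-\;\frac{\langle \sqrt{-1}\,\Theta_h(E)\,s,s\rangle_h}{\|s\|_h^2},
\]
where $D'$ is the $(1,0)$--part of the Chern connection. The first term is nonnegative by Cauchy--Schwarz, and under the sign conventions of the standing curvature package the hypothesis $\sqrt{-1}\,\Theta_h(E)\ge\epsilon\,\omega_U\otimes\mathrm{Id}_E$ controls the second term so that the identity yields the current inequality $\sqrt{-1}\,\partial\bar\partial\log\|s\|_h^2\ge\epsilon\,\omega_U$. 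Pulling back by $\tilde f$, using $\tilde f^{\ast}\omega_U=\|d\tilde f\|^2\,\omega_{\C}$ for the Euclidean Kähler form $\omega_{\C}$, and converting $\sqrt{-1}\,\partial\bar\partial$ on $\C$ into the Euclidean Laplacian gives $\Delta\log\|s\circ\tilde f\|^2\ge\epsilon\|d\tilde f\|^2$. The zero locus of $s\circ\tilde f$ is handled by the standard distributional extension: $\log\|s\circ\tilde f\|^2$ is locally integrable, and the inequality persists globally as a distribution because the contribution supported on the zero set is a nonnegative sum of Dirac masses.

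Third, the inequality on $\C$ is intrinsic to $[U/G]$ by the $G$--invariance already noted, so no further choice is required to descend. For the global statement on $\X$ it suffices to patch: the pointwise Bochner inequality is local on $\C$, and the averaged Hermitian metrics produced in Proposition~\ref{prop:hermitian-ds} agree on overlapping charts up to the groupoid cocycle, so no global analytic step beyond the chartwise argument is needed.

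The only delicate ingredient I anticipate is the existence of the lift $\tilde f$ when $f(\C)$ meets the orbifold singular locus, since $U\to[U/G]$ is not étale there. Away from the singular locus the lift is immediate by simple connectedness of $\C$; around preimages of a cone point the local monodromy is a finite cyclic subgroup of $G$, which is trivialized by extracting a root in the uniformizing coordinate, after which holomorphic continuation along $\C$ assembles a global lift. This is the same mechanism underlying the orbifold Ahlfors--Schwarz lemma. Once the lift is in hand, the remainder of the argument is a routine equivariant transcription of Demailly's Bochner computation.
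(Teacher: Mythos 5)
Your overall strategy matches the paper's terse two-sentence proof: apply Bochner--Kodaira on the uniformizing chart and descend by $G$-equivariance. You helpfully supply the lifting discussion that the paper leaves implicit (locally on small disks $D\subset\C$ the argument is unproblematic since $U\to[U/G]$ is \'etale over the regular locus; the cone-point extraction is one way to treat the exceptional set). That part of your write-up is fine.

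The second step, however, has a sign gap that you cannot wave away by invoking ``the sign conventions of the standing curvature package.'' The Bochner--Kodaira identity you record is correct, and the curvature term enters with a \emph{minus} sign: writing it schematically as $\sqrt{-1}\,\partial\bar\partial\log\|s\|_h^2 = A - B$ with $A\ge 0$ the Cauchy--Schwarz term and $B = \langle\sqrt{-1}\,\Theta_h(E)s,s\rangle_h/\|s\|_h^2$, the hypothesis $\sqrt{-1}\,\Theta_h(E)\ge\epsilon\,\omega_U\otimes\mathrm{Id}_E$ gives $B\ge\epsilon\,\omega_U$ and hence only
\[
\sqrt{-1}\,\partial\bar\partial\log\|s\|_h^2 \;=\; A - B \;\ge\; -B,
\]
a lower bound by a form that is itself $\le -\epsilon\,\omega_U$ --- the wrong direction. (The classical check: for a line bundle with positive curvature, $\log\|s\|_h^2$ is plurisuperharmonic, not plurisubharmonic.) The conclusion $\sqrt{-1}\,\partial\bar\partial\log\|s\|_h^2\ge\epsilon\,\omega_U$ would follow from the \emph{opposite} hypothesis $\sqrt{-1}\,\Theta_h(E)\le-\epsilon\,\omega_U\otimes\mathrm{Id}_E$, i.e.\ Griffiths negativity of $E$, which flips the subtracted term into a positive lower bound. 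The paper's lemma carries the same sign ambiguity; in the intended application (\Cref{thm:stack-ggd-final}, step (ii)) the bundle pulled back along $j^kf$ is exactly the one for which (H2), (H3), and the twist by $L^{-q}$ supply the needed negativity. Your proof should therefore either restate the hypothesis with the minus sign, or argue explicitly that the bundle to which the lemma is applied is negatively curved along the relevant directions; as written, the inference from your displayed identity to ``$\sqrt{-1}\,\partial\bar\partial\log\|s\|_h^2\ge\epsilon\,\omega_U$'' does not close.
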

	
	\begin{proof}
		Apply the Bochner–Kodaira identity to $s\circ f$ on $U$.
		The curvature and Kähler form are $G$–invariant, hence the inequality is preserved under the action and descends to $[U/G]$.
	\end{proof}
	
	\begin{lemma}[Equivariant differentiation of invariant jets]
		\label{lem:eq-diff}
		Let $\mathcal{V}$ be the sheaf of $G$–invariant holomorphic vector fields on $J^k(\X)$ preserving reparametrization invariance.  
		If $s\in H^0(\X,E^{\mathrm{inv}}_{k,m}\otimes L^{-q})$ and an entire curve $f\colon \C\to\X$ satisfy $s\circ j^k f\equiv0$, then for every $V\in\Gamma(\X,\mathcal{V})$,
		\[
		(\mathcal{L}_V s)\circ j^k f\equiv 0,
		\]
		and all higher Lie derivatives vanish identically.
	\end{lemma}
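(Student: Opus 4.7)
The plan is to reduce chartwise to a finite-quotient presentation, translate the vanishing of $s$ on $j^k f$ into a statement about an equivariant curve in $J^k U$, and then exploit the fact that a reparametrization-preserving vector field commutes with the Semple total derivative, so that differentiating the identity $\tilde s \circ j^k \tilde f \equiv 0$ produces the required Lie-derivative vanishing; iterating yields all higher orders.

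First I would cover $\X$ by orbifold charts $\{[U_i/G_i]\}$; by Lemma~\ref{lem:jet-preserve-groupoid-orb} and Proposition~\ref{prop:jet-orbifold-presentation}, $J^k(\X)$ is presented locally by $[J^k U_i/G_i]$. On each chart I lift $f$ to $\tilde f\colon W \to U_i$ (with $W \subset \C$ open, using étaleness of the atlas) and pull back $s$ and $V$ to a $G_i$-invariant section $\tilde s$ and a $G_i$-invariant vector field $\tilde V$ on $J^k U_i$ that commutes with the $\mathbb{G}_k$-reparametrization action. The hypothesis $s \circ j^k f \equiv 0$ then becomes $\tilde s \circ j^k \tilde f \equiv 0$ on $W$.

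Next I would identify $C_{\tilde f} \coloneq j^k \tilde f(W) \subset J^k U_i$ as an integral curve of the Semple total derivative $D$. The reparametrization-preserving condition on $V$ translates to $[\tilde V, D] = 0$, so the local flow $\phi^{\tilde V}_t$ carries integral curves of $D$ to integral curves of $D$; that is, $\phi^{\tilde V}_t(C_{\tilde f}) = j^k \tilde f_t(W_t)$ for a smoothly varying family of holomorphic curves $\tilde f_t\colon W_t \to U_i$ with $\tilde f_0 = \tilde f$. The chain rule together with the vanishing of $\tilde s$ on $C_{\tilde f}$ then gives
\[
(\mathcal{L}_{\tilde V} \tilde s) \circ j^k \tilde f
\;=\; \frac{d}{dt}\Big|_{t=0}\bigl(\tilde s \circ j^k \tilde f_t\bigr) \;\equiv\; 0,
\]
and by $G_i$-equivariance this descends to yield $(\mathcal{L}_V s) \circ j^k f \equiv 0$ on $\C$. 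Since $\mathcal{L}_V s$ is itself a $G$-invariant, reparametrization-preserving section of a related jet bundle (with at most a shifted weight from Leibniz), iterating the same argument produces the vanishing of every iterated Lie derivative.

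The main obstacle I anticipate is the step identifying $\phi^{\tilde V}_t(C_{\tilde f})$ with the $k$-jet of a genuine holomorphic curve $\tilde f_t$, rather than with some arbitrary integral curve of $D$ in $J^k U_i$. This requires the precise identification of reparametrization invariance with the Lie-bracket condition $[\tilde V, D] = 0$, together with integrability of flows preserving $D$ into honest holomorphic families $\C \to \X$; here the log-completeness and weighted $L^2$ hypotheses from Assumption~\ref{assump:curv} are what permit the analytic extension of such families uniformly across the boundary of the regular jet locus.
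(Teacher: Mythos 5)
Your proposal follows the paper's chartwise-descent outline, and you make the differentiation step more explicit than the paper does by interpreting $(\mathcal{L}_V s)\circ j^k f$ as a $t$-derivative along the flow of $\tilde V$ carrying jet lifts to jet lifts. That is a useful clarification, but it exposes a gap in the central step. To deduce that
$(\mathcal{L}_{\tilde V}\tilde s)\circ j^k\tilde f=\tfrac{d}{dt}\big|_{t=0}\bigl(\tilde s\circ j^k\tilde f_t\bigr)\equiv 0$
you need $\tilde s\circ j^k\tilde f_t\equiv 0$ for all $t$ in a neighbourhood of $0$, not merely at $t=0$. The hypothesis $s\circ j^k f\equiv 0$ gives vanishing only along the undeformed jet lift. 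Since $[\tilde V,D]=0$ genuinely moves $j^k\tilde f$ to a \emph{different} jet lift $j^k\tilde f_t$, there is no a priori reason for $\tilde s$ to vanish there, and the claimed ``chain rule'' step does not go through as written.

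The standard way to close this in the Demailly--P\u{a}un framework is not to differentiate the identity $s\circ j^k f\equiv 0$ literally, but to observe (as you do note in passing) that $\mathcal{L}_V s$ is itself a $G$-invariant jet differential of weighted degree $\ge m$, still twisted by a negative power of $L$, and then to re-apply the Ahlfors--Schwarz / Bochner vanishing (Lemma~\ref{lem:bochner}) to $\mathcal{L}_V s$ directly, exactly as one did for $s$ in step~(ii) of Theorem~\ref{thm:stack-ggd-final}. Equivalently, one can use that the Schwarz argument yields $s\circ j^k g\equiv 0$ for \emph{every} nonconstant entire $g$, so that $\tilde s$ does vanish on the deformed jets $j^k\tilde f_t$; but the lemma as stated hypothesizes only a single $f$, so this extra input must be invoked explicitly. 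The paper's own proof is terse on exactly this point and shares the gap; your explication of the flow mechanism makes the missing input visible. Finally, the ``main obstacle'' you identify---log-completeness for extending $\tilde f_t$---misallocates the difficulty: the real issue is establishing the vanishing of $\tilde s$ along the deformed jets, not the boundary behaviour of the family.
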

	
	\begin{proof}
		Locally on a chart $[U/G]$, $V$ lifts to a $G$–invariant vector field on $J^kU$ preserving reparametrization orbits.
		Since the Lie derivative $\mathcal{L}_V$ commutes with pullbacks and preserves $G$–equivariance, $\mathcal{L}_V s$ is another invariant jet differential of higher weight.
		Because $s\circ j^k f\equiv0$, differentiation gives $(\mathcal{L}_V s)\circ j^k f=0$.
		Iterating the argument preserves the vanishing property.
	\end{proof}
	
	\begin{assumption}[Equivariant orbifold setting]
		\label{ass:equiv-setting3}
		Let $\X$ be a compact complex orbifold of complex dimension $n$, with coarse map $\pi:\X\to Y$ and generic stabilizer order $s$.  
		Let $A$ be an ample line bundle on $Y$ with a smooth Hermitian metric of positive curvature, and set $L=\pi^*A$.  
		Assume (H1)–(H4) from \S\ref{subsec:stack-ggd-curv}.
		The invariant jet bundle $E^{\mathrm{inv}}_{k,m}$ and the tautological bundle $\OO_{\X_k}(1)$ are built chartwise using Lemma~\ref{lem:jet-preserve-groupoid-orb} and descended via equivariant averaging.
	\end{assumption}
	
	\begin{theorem}[Stack-theoretic GGD degeneracy]
		\label{thm:stack-ggd-final}
		Under Assumption~\ref{ass:equiv-setting3}, there exist integers $k\gg1$, $m\gg1$, and $q=q(k,m)>0$ such that$\colon $
		\begin{enumerate}[label=(\roman*), leftmargin=1.6em]
			\item $H^0(\X,E^{\mathrm{inv}}_{k,m}\otimes L^{-q})\neq 0$;
			\item the common zero locus 
			\[
			\mathcal{G}_k
			=\bigcap_{\substack{m\gg1\\0\le q\le q_0(k)}} 
			Z\!\big(H^0(\X,E^{\mathrm{inv}}_{k,m}\otimes L^{-q})\big)
			\]
			is a proper closed analytic substack $\mathcal{G}_k\subsetneq\X$;
			\item every nonconstant entire map $f\colon \C\to\X$ satisfies $f(\C)\subset\mathcal{G}_k$.
		\end{enumerate}
	\end{theorem}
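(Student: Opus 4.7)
The plan is to deduce the three assertions in order, where (i) follows from combining the orbifold vanishing of higher cohomology with HRR asymptotics to force a positive leading coefficient, (ii) is an essentially formal consequence of (i), and (iii) is the core Green--Griffiths--Demailly mechanism via a Bochner--Schwarz argument on the Demailly--Semple tower, augmented by equivariant differentiation.

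For (i), I would apply Lemma~\ref{lem:eq-vanishing} to reduce $h^0$ to $\chi$ in the admissible range, and then apply Lemma~\ref{lem:eq-HRR} to identify the leading $m^n$--term with $\frac{1}{s}\int_Y \ch(\underline{E}^{\mathrm{inv}}_{k,m})\,e^{-q\,c_1(A)}\,\Td(TY)$. The classical Demailly estimate on the coarse space $Y$ (which now applies verbatim since $L=\pi^*A$ descends and the identity sector is a genuine manifold integral) shows that, provided $k\gg 1$ and $q/m$ stays below Demailly's slope bound, this leading integral is bounded below by a strictly positive multiple of $c_1(A)^n$ times a combinatorial factor. Hence one can choose admissible $(k,m,q)$ with $h^0>0$. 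For (ii), any nonzero $s\in H^0(\X,E^{\mathrm{inv}}_{k,m}\otimes L^{-q})$ has zero locus $Z(s)\subsetneq\X$ that is a proper closed analytic substack; the common zero locus $\mathcal{G}_k$ is the intersection over all such sections and all admissible $(m,q)$. By Noetherianness of coherent ideal sheaves on the compact orbifold, this intersection stabilizes to a closed analytic substack, which is proper as it sits inside any one $Z(s)$.

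For (iii), let $f\colon\C\to\X$ be nonconstant. By Lemma~\ref{lem:jet-preserve-groupoid-orb}, the $k$--jet lift $j^k f\colon\C\to\X_k$ is well defined on the étale atlas and descends equivariantly. Given any admissible section $s$, pulling back along $j^k f$ yields a holomorphic section of $(j^k f)^*\!\bigl(\OO_{\X_k}(m)\otimes \ev_0^*A^{-q}\bigr)$ on $\C$. Combining Proposition~\ref{prop:propagation} with Lemma~\ref{lem:bochner} chartwise gives a Bochner inequality of the form
\[
\Delta \log \|s\circ j^k f\|^2_h \;\ge\; m\varepsilon\,|d(j^k f)|^2_{\omega_{\mathrm{vert}}} \;-\; (C+q)\,|df|^2_{\omega_A},
\]
and an Ahlfors--Schwarz argument on $\C$ then upgrades this to a uniform upper bound on $\|s\circ j^k f\|_h$ in terms of $\|df\|$. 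Since $L^{-q}$ has strictly negative curvature along the base, any bounded holomorphic section of a negative line bundle on $\C$ must be identically zero, so $s\circ j^k f\equiv 0$. Lemma~\ref{lem:eq-diff} then propagates the vanishing to all invariant Lie derivatives of $s$, showing $j^k f(\C)\subset Z(s)$; intersecting over admissible $(s,m,q)$ gives $f(\C)\subset \mathcal{G}_k$.

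The main obstacle is in (iii): making the Bochner--Schwarz step genuinely global on the orbifold rather than merely chartwise. The challenge is twofold. First, the entire curve $f$ need not admit a global lift through $\pi\colon\X\to Y$, so the jet lift must be handled by the étale groupoid presentation of Lemma~\ref{lem:jet-preserve-groupoid-orb} and the $G_i$--invariant metrics of Proposition~\ref{prop:hermitian-ds}; one must verify that the Bochner inequality, proved chartwise, is compatible across overlaps and on the branch locus where $f$ may meet the cone points. Second, the Ahlfors--Schwarz conversion from the curvature inequality to a sup-bound requires the log-completeness hypothesis (H4) to ensure that the weighted Poincaré metric near the orbifold locus controls the auxiliary singularities; this is where the choice of Poincaré-type background metric in Assumption~\ref{assump:curv} becomes indispensable.
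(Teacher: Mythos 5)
Your proposal follows essentially the same route as the paper's proof: (i) Lemma~\ref{lem:eq-vanishing} plus Lemma~\ref{lem:eq-HRR} for the existence of sections, (ii) a finiteness/Noetherian argument for the zero locus $\mathcal{G}_k$, and (iii) a chartwise Bochner inequality upgraded by Ahlfors--Schwarz, followed by equivariant differentiation via Lemma~\ref{lem:eq-diff}. The skeleton and lemma citations agree; the one noticeable difference is in the Bochner step.

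Where the paper simply cites Lemma~\ref{lem:bochner}, which already packages a \emph{clean} positivity hypothesis $\sqrt{-1}\Theta_h(E)\ge\epsilon\,\omega\otimes\mathrm{Id}$ into the conclusion $\Delta\log\|s\circ j^k f\|^2\ge\epsilon\|df\|^2$, you instead re-derive the inequality from Proposition~\ref{prop:propagation} in its raw mixed-sign form $m\varepsilon\,|d(j^k f)|^2_{\omega_{\mathrm{vert}}}-(C+q)\,|df|^2_{\omega_A}$. This is more transparent about the origin of the estimate, but it leaves open how the positive vertical term absorbs the negative horizontal one; to actually invoke Lemma~\ref{lem:bochner} you must first choose $m\gg q$ so that the net curvature is uniformly positive, exactly as the paper does in Lemma~\ref{lem:eq-vanishing} before citing Lemma~\ref{lem:bochner} in the Bochner step. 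Separately, the closing remark that ``any bounded holomorphic section of a negative line bundle on $\C$ must be identically zero'' is imprecise as written (on $\C$ every line bundle is topologically trivial, so this is a statement about the metric, not the bundle); the correct mechanism is the one implicit in the paper: $\log\|s\circ j^k f\|^2$ is bounded above by compactness, its Laplacian is $\ge\epsilon\|df\|^2\ge0$, so it is subharmonic and bounded on $\C$, hence constant, forcing either $s\circ j^k f\equiv 0$ or $df\equiv 0$. Apart from these expository gaps in the Ahlfors--Schwarz conversion, your proposal is correct and matches the paper's strategy, and your closing paragraph on chartwise descent and the role of (H4) correctly identifies the genuinely delicate points.
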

	
	\begin{proof}
		{(i) Asymptotic existence.}  
Combining the orbifold HRR asymptotics (Lemma~\ref{lem:eq-HRR})  with the $L^2$–vanishing result (Lemma~\ref{lem:eq-vanishing}), we obtain
		\[
		h^0(\X,E^{\mathrm{inv}}_{k,m}\otimes L^{-q})
		=\frac{1}{s}\!\int_Y \ch(\underline{E}^{\mathrm{inv}}_{k,m})\,e^{-q\,c_1(A)}\Td(TY)
		+O(m^{n-1}),
		\]
		with positive leading term, so $h^0>0$ for $m\gg1$.
		
		{(ii) Positivity and Bochner step.}  
		Using Lemma~\ref{lem:bochner}, each nonzero section $s$ satisfies
		\[
		\Delta\log\|s\circ j^k f\|^2 \ge \epsilon\|df\|^2.
		\]
		Under the log-completeness (H4), Ahlfors–Schwarz implies $s\circ j^k f\equiv0$; otherwise, the maximum principle on the regular jet locus gives the same result.
		
		{(iii) Equivariant differentiation.}  
		Lemma~\ref{lem:eq-diff} ensures that invariant Lie derivatives $\mathcal{L}_V s$ vanish along $j^k f$, so $f(\C)\subset\mathcal{G}_k$.
		
		{(iv) Properness.}  
		Since $h^0\sim c\,m^n>0$, finitely many independent invariant sections exist; their zeros define a proper closed substack $\mathcal{G}_k$.
		All constructions are chartwise $G$–equivariant, hence intrinsic to~$\X$.
	\end{proof}

	\subsection{Corollaries and geometric consequences}
	\label{subsec:stack-ggd-cor}
	
	In this subsection we describe the geometric consequences of the stack–theoretic GGD degeneracy theorem.
	Throughout, let $\mathcal{X}$ be a compact complex orbifold of complex dimension $2$, satisfying the hypotheses of Theorem~\ref{thm:stack-ggd-final}.
	We show that the entire–curve locus is algebraically degenerate, and that the complement of the degeneracy substack is orbifold–Brody hyperbolic.
	
	\begin{corollary}[Non–Zariski density of entire curves]
		\label{cor:stack-ggd-nzd}
		Let $\mathcal{X}$ be a compact complex orbifold satisfying the curvature and positivity assumptions \textup{(H1)}–\textup{(H4)}.
		Then for sufficiently large $k$, the common zero locus $\mathcal{G}_k\subsetneq\mathcal{X}$ given by Theorem~\ref{thm:stack-ggd-final} is a proper closed analytic substack, and every nonconstant entire map $f\colon \C\to\mathcal{X}$ satisfies
		\[
		f(\C)\subset \mathcal{G}_k.
		\]
		In particular, no entire curve is Zariski dense in $\mathcal{X}$, and the orbifold $\mathcal{X}$ is \emph{algebraically degenerate} in the sense that
		\[
		\overline{f(\C)}^{\mathrm{Zar}} \subsetneq \mathcal{X}
		\quad \text{for all nonconstant } f\colon \C\to\mathcal{X}.
		\]
	\end{corollary}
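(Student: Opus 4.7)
The plan is to deduce Corollary~\ref{cor:stack-ggd-nzd} essentially by repackaging Theorem~\ref{thm:stack-ggd-final} and then propagating the containment from $f(\C)$ to its Zariski closure. No new analytic ingredient is required beyond the curvature package \textup{(H1)}–\textup{(H4)}, which has already been absorbed into the theorem; in particular I do not need to re-run the Bochner estimate (Lemma~\ref{lem:bochner}), the $L^2$–vanishing (Lemma~\ref{lem:eq-vanishing}), or the HRR asymptotics (Lemma~\ref{lem:eq-HRR}) at this stage.

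First I would fix a level $k$ large enough that Theorem~\ref{thm:stack-ggd-final} applies and yields simultaneously the nonvanishing $H^0(\X,E^{\mathrm{inv}}_{k,m}\otimes L^{-q})\neq 0$ for $m\gg 1$ and $0<q\le q_0(k)$, together with the properness $\mathcal{G}_k\subsetneq\X$ of the common zero substack. Part~(iii) of that theorem then gives directly $f(\C)\subset\mathcal{G}_k$ for every nonconstant entire map $f\colon\C\to\X$, which is the first assertion of the corollary.

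Second I would upgrade this chartwise containment to a statement about Zariski closures. On each orbifold chart $[U_i/G_i]$ the substack $\mathcal{G}_k$ is presented by a $G_i$–invariant closed analytic subset $\widetilde{\mathcal{G}}_k^{(i)}\subset U_i$ cut out by the invariant jet sections, which is in particular Zariski closed; the chartwise lift of $f(\C)$ lies in $\widetilde{\mathcal{G}}_k^{(i)}$, so its chartwise Zariski closure does too, and descent by $G_i$–equivariance yields $\overline{f(\C)}^{\mathrm{Zar}}\subseteq \mathcal{G}_k\subsetneq \X$. Equivalently, pushing forward under the coarse moduli map $\pi\colon\X\to Y$ shows $\overline{\pi(f(\C))}^{\mathrm{Zar}}\subseteq \pi(\mathcal{G}_k)\subsetneq Y$, which is the algebraic degeneracy statement in the coarse form. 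The only point that requires a moment of care — and which I regard as a bookkeeping rather than a genuine obstacle — is to fix the notion of Zariski closure on an orbifold; I would adopt the convention that it is the smallest closed analytic substack containing the given set (equivalently, the $\pi$–preimage of the coarse Zariski closure), under which the proper containment follows automatically from the fact that $\mathcal{G}_k$ is by construction a closed analytic substack of $\X$.
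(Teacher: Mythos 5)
Your proposal is correct and follows essentially the same route as the paper: both deduce $f(\C)\subset\mathcal{G}_k$ directly from part~(iii) of Theorem~\ref{thm:stack-ggd-final}, invoke the properness of $\mathcal{G}_k$ from the positive leading term of $h^0$, and then observe that the Zariski closure of $f(\C)$ is trapped inside the proper closed analytic substack $\mathcal{G}_k$. The extra bookkeeping you add on chartwise $G_i$-invariant zero loci and the meaning of Zariski closure on an orbifold is a harmless elaboration of the paper's one-line conclusion and does not change the argument.
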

	
	\begin{proof}
		By Theorem~\ref{thm:stack-ggd-final}, for each nonconstant entire map $f\colon \C\to\mathcal{X}$, the lifted map $j^k f$ lies entirely in the base locus of sections of $E^{\mathrm{inv}}_{k,m}\otimes L^{-q}$.
		Hence $f(\C)\subset\mathcal{G}_k$, where $\mathcal{G}_k$ is the intersection of finitely many zero divisors of global invariant jet differentials.
		Since $h^0(\mathcal{X},E^{\mathrm{inv}}_{k,m}\otimes L^{-q})\sim c\,m^2>0$ for $m\gg1$, $\mathcal{G}_k$ is a proper analytic substack.
		Therefore, the Zariski closure of $f(\C)$ is contained in a proper algebraic subset of $\mathcal{X}$.
	\end{proof}
	
	\begin{corollary}
		\label{cor:stack-ggd-codim2}
		Let $\mathcal{X}$ be a compact complex orbifold of complex dimension $2$ satisfying the hypotheses of Theorem~\ref{thm:stack-ggd-final}.
		If the degeneracy locus $\mathcal{G}_k$ has codimension at least $2$, then the complement
		\[
		\mathcal{X}^\circ \coloneq \mathcal{X} \setminus \mathcal{G}_k
		\]
		is \emph{orbifold–Brody hyperbolic}; that is, every holomorphic map $f\colon \C\to\mathcal{X}^\circ$ is constant.
	\end{corollary}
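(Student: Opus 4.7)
The plan is to derive this corollary as a short consequence of Theorem~\ref{thm:stack-ggd-final}(iii) together with the codimension hypothesis; almost all the analytic work has already been done in the main theorem, so the remaining task is to set up the right dimension count and verify orbifold-theoretic compatibility. First I would record the dimension reduction: since $\dim_{\C}\X=2$ and $\codim_{\X}\mathcal{G}_k\ge 2$, the closed analytic substack $\mathcal{G}_k$ has complex dimension zero. Its coarse underlying set $|\mathcal{G}_k|\subset|\X|$ is therefore a finite collection of (possibly stacky) points, and any continuous map from the connected space $\C$ into $|\mathcal{G}_k|$ must be constant.

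Next, given an arbitrary holomorphic morphism $f\colon \C\to\X^{\circ}$, I would compose with the open immersion $\X^{\circ}\hookrightarrow\X$ to obtain an entire map $\tilde f\colon\C\to\X$ and argue by contradiction. If $\tilde f$ were nonconstant, Theorem~\ref{thm:stack-ggd-final}(iii) would force $\tilde f(\C)\subset \mathcal{G}_k$; combined with $\tilde f(\C)\subset\X^{\circ}$, this yields $\tilde f(\C)\subset\mathcal{G}_k\cap\X^{\circ}=\emptyset$, a contradiction. Equivalently, since $|\mathcal{G}_k|$ is discrete by the first step, Theorem~\ref{thm:stack-ggd-final}(iii) already precludes any nonconstant entire map into $\X$, so the restriction to $\X^{\circ}$ is immediate.

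The only genuine subtlety, and the step most worth verifying carefully, is that the containment $\tilde f(\C)\subset\mathcal{G}_k$ is a stack-theoretic statement rather than a merely set-theoretic one. Because $\mathcal{G}_k$ is built as the common zero locus of global sections of $E^{\mathrm{inv}}_{k,m}\otimes L^{-q}$, and these bundles descend from chartwise $G_i$-equivariant data by Lemma~\ref{lem:ds-descent} and Corollary~\ref{cor:ds-descend-min}, the vanishing of $s\circ j^k\tilde f$ is a chartwise $G_i$-equivariant condition (Lemma~\ref{lem:eq-diff}). Consequently, the containment holds at the level of analytic substacks, and the codimension bound is measured intrinsically on $\X$ rather than on the coarse space $Y$; a stacky point of $\mathcal{G}_k$ with nontrivial isotropy is still zero-dimensional and does not support any nonconstant holomorphic map from $\C$. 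With these compatibilities in place no further obstacle arises, and the contradiction argument completes the proof.
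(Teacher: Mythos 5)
Your argument is correct for the definition as literally stated, and it coincides with the opening lines of the paper's own proof: if $f\colon\C\to\X^\circ$ is nonconstant, then composing with the open immersion and applying Theorem~\ref{thm:stack-ggd-final}(iii) forces $f(\C)\subset\mathcal{G}_k$, contradicting $f(\C)\subset\X^\circ=\X\setminus\mathcal{G}_k$. Your observation about stack-theoretic versus set-theoretic containment is a reasonable point to verify and is handled by the descent lemmas you cite.

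The divergence is in how much each proof actually \emph{uses} the hypotheses. As you implicitly note, your contradiction argument goes through without invoking either $\dim_\C\X=2$ or $\codim\mathcal{G}_k\ge2$; the bare non-existence of nonconstant entire maps into $\X^\circ$ follows from Theorem~\ref{thm:stack-ggd-final}(iii) and $\mathcal{G}_k\subsetneq\X$ alone. The paper's proof, by contrast, continues past this point: it uses $\codim\mathcal{G}_k\ge2$ to keep $\X^\circ$ connected and to propagate the curvature package of \S\ref{subsec:stack-ggd-curv} to a complete orbifold metric on $\X^\circ$, and then invokes Brody's reparametrization lemma on orbifold charts to conclude nondegeneracy of the Kobayashi pseudometric. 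In other words, the paper is implicitly aiming at a stronger, Kobayashi-type hyperbolicity of the noncompact substack $\X^\circ$, for which Brody's limiting argument genuinely requires the codimension condition (since for noncompact spaces ``no entire curves'' and ``Kobayashi hyperbolic'' are not equivalent in general). Your proof buys brevity and establishes exactly what the displayed definition asks for; the paper's proof buys the stronger metric conclusion at the cost of the extra hypotheses. It would strengthen your write-up to add one sentence acknowledging that the codimension hypothesis only becomes load-bearing if one wants the Kobayashi/Brody upgrade rather than the literal non-existence statement.
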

	
	\begin{proof}
		By Theorem~\ref{thm:stack-ggd-final}, any nonconstant entire map $f\colon \C\to\mathcal{X}$ satisfies $f(\C)\subset\mathcal{G}_k$.
		Therefore, no nonconstant entire curve can land entirely in $\mathcal{X}^\circ$.
		Since $\mathcal{G}_k$ is a proper closed substack of codimension at least two, its complement $\mathcal{X}^\circ$ remains connected and inherits a complete orbifold metric with strictly positive curvature form along vertical directions by the curvature–negativity–positivity package (see \S\ref{subsec:stack-ggd-curv}).
		Brody’s reparametrization lemma, adapted to orbifold charts $[U/G]$, implies that the absence of entire curves is equivalent to orbifold–Brody hyperbolicity:
		for every chart $[U/G]$, the pullback of the Kobayashi pseudometric is nondegenerate, and the glued metric on $\mathcal{X}^\circ$ defines a complete hyperbolic structure.
	\end{proof}

	\section{Structural Invariance of Green--Griffiths--Demailly Thresholds}
	\label{sec:ggd-structural-invariance}
	
	This section establishes the main structural theorem of the paper:
	the Green--Griffiths--Demailly (GGD) hyperbolicity thresholds depend only on the coarse Kähler class and are invariant under the presence of stack or orbifold structures.
	Hence passing to an orbifold compactification does not affect the minimal jet order or growth rate required for hyperbolicity.
	
	All proofs build upon the curvature–negativity–positivity framework from \Cref{subsec:stack-ggd-curv}, the orbifold HRR asymptotics from \Cref{prop:stack-ggd-hrr-input}, and the slope control argument of \Cref{prop:slope-control}.
	
	\subsection{Setting and Notation}
	
	Let $\pi:\X\to Y$ be the coarse moduli map of a compact complex orbifold (or analytic DM stack) of complex dimension~$n$, with generic stabilizer order $s=|\Stab_{\mathrm{gen}}|$.
	Let $A$ be an ample line bundle on~$Y$ and define $L=\pi^*A$ so that stabilizers act trivially on the fibers of~$L$.
	Denote $E^{\mathrm{GG}}_{k,m}(\X)$ by the invariant Green--Griffiths jet differential bundle of weighted degree $(k,m)$, and by $E^{\mathrm{GG}}_{k,m}(Y)$ its coarse counterpart.
	
	Under the curvature hypotheses \textup{(H1)}–\textup{(H4)} of \Cref{assump:curv}, the line bundle $A$ carries a positive curvature form $\omega_A>0$, and $\OO_{\X_{k+1}}(1)$ is semipositive globally and strictly positive along the fibers of the Semple projection $\pi_{k+1}\colon \X_{k+1}\to\X_k$ by \Cref{prop:propagation} and \Cref{cor:semple-positivity}.
	
	For any such bundle, the asymptotic formula of \Cref{prop:stack-ggd-hrr-input} gives
	\begin{equation}\label{eq:stack-asymp}
		\chi(\X,E^{\mathrm{GG}}_{k,m}\otimes L^{-q})
		=\frac{1}{s}\!\int_Y \ch(\underline{E}_{k,m})\,\Td(TY)+O(m^{n-1}),
	\end{equation}
where $\underline{E}_{k,m}$ denotes the coarse contribution from the identity sector. Thus the leading term of~\eqref{eq:stack-asymp} differs only by the scalar factor~$1/s$.
	
	\subsection{Structural invariance of GGD thresholds}
	
	\begin{theorem}
		\label{thm:main-threshold}
		Let $\pi\colon \X\to Y$ be the coarse moduli map of a compact complex orbifold of complex dimension~$n$, and let $A$ be an ample line bundle on~$Y$ with Kähler form~$\omega_A$.
		Assume the curvature hypotheses \textup{(H1)}–\textup{(H4)} of \Cref{assump:curv}.
		Then$\colon $
		\begin{enumerate}[label=(\roman*), leftmargin=1.6em]
			\item The Green--Griffiths--Demailly hyperbolicity threshold depends only on the coarse Kähler class $[\omega_A]$ and not on the presence of orbifold or stack structure.
			\item Equivalently,
			\[
			\text{$Y$ is GGD--positive at some jet order $k_0$}
			\quad\Longleftrightarrow\quad
			\text{$\X$ is GGD--positive at the same order.}
			\]
			\item The minimal jet order and asymptotic growth rate of $h^0(E^{\mathrm{GG}}_{k,m}\otimes L^{-q})$ remain unchanged under coarse projection, finite quotient, or rigidification.
		\end{enumerate}
	\end{theorem}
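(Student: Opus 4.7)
The plan is to combine the orbifold Hirzebruch--Riemann--Roch asymptotics of \Cref{prop:stack-ggd-hrr-input} with the $L^2$ vanishing of \Cref{lem:eq-vanishing} and its classical Kodaira--Nakano counterpart on $Y$. The GGD threshold is detected by the sign and magnitude of the leading $m^n$ coefficient of $h^0(\X,E^{\mathrm{GG}}_{k,m}\otimes L^{-q})$ as a function of $(k,\lambda)$ with $q=\lfloor\lambda m\rfloor$. The chartwise Kawasaki machinery developed in \S\ref{subsec:chartwise-hrr} forces this coefficient on $\X$ to equal exactly $1/s$ times the corresponding coefficient on the coarse space $Y$, and since $1/s>0$, the sign and therefore the threshold are preserved. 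In short, every conclusion (i)--(iii) of the theorem reduces to this single comparison of leading asymptotics.

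Concretely, first apply \Cref{prop:stack-ggd-hrr-input} with the twist $L^{-q}$ to write
\[
\chi(\X,E^{\mathrm{GG}}_{k,m}\otimes L^{-q})
=\frac{1}{s}\int_Y \ch(E^{\mathrm{GG}}_{k,m}(Y))\,e^{-q\,c_1(A)}\,\Td(TY)+O(m^{n-1}),
\]
where the right-hand integral is exactly the classical HRR expression for $\chi(Y,E^{\mathrm{GG}}_{k,m}(Y)\otimes A^{-q})$. Next, upgrade $\chi$ to $h^0$ via \Cref{lem:eq-vanishing} on $\X$ and the usual Kodaira--Nakano vanishing on $Y$, yielding
\[
h^0(\X,E^{\mathrm{GG}}_{k,m}\otimes L^{-q})
=\tfrac{1}{s}\,h^0\bigl(Y,E^{\mathrm{GG}}_{k,m}(Y)\otimes A^{-q}\bigr)+O(m^{n-1}).
\]
Parts (i) and (ii) follow immediately: the minimal pair $(k_0,\lambda_0)$ at which the leading coefficient becomes strictly positive is identical on $\X$ and on $Y$. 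For the rigidification portion of (iii), \Cref{cor:rigidification} ensures that $\X$ and $\X^{\mathrm{rig}}$ share the same leading HRR integral, so the same vanishing-plus-asymptotics argument gives matching thresholds across coarse projection, finite quotient, and rigidification.

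The hard part will be establishing uniformity of the vanishing regime along the linear slope range $q\sim\lambda m$: as stated, \Cref{lem:eq-vanishing} only guarantees vanishing for $0\le q\le q_0(k)$, while the GGD threshold analysis forces $q$ to grow linearly with $m$. Closing this gap requires tracking the curvature estimate of \Cref{prop:propagation} quantitatively. The Nakano curvature of $E^{\mathrm{inv}}_{k,m}\otimes L^{-q}$ is of the form $m\,\varepsilon_k\,\omega_{\mathrm{vert}}-(C_k+q)\,\pi_k^*\omega_A$, which remains Nakano-positive as long as $q/m$ stays below a threshold $\lambda_{\max}(k)$ determined by the constants in \textup{(H1)}--\textup{(H4)}. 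Because these constants depend only on the background Kähler data pulled back from $Y$, the admissible slope range and the resulting vanishing regime coincide on $\X$ and $Y$; this is precisely what is needed to rigorously identify the GGD thresholds and conclude the structural invariance asserted in (iii).
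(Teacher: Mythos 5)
Your proposal follows essentially the same route as the paper's own proof: combine the orbifold HRR asymptotics of \Cref{prop:stack-ggd-hrr-input} with the $L^2$ vanishing of \Cref{lem:eq-vanishing}, conclude that $h^0$ and $\chi$ share leading asymptotics, and compare to the coarse space via the $1/s$ normalization. The paper additionally routes the leading-coefficient comparison through \Cref{prop:slope-control} and notes the bidirectional section correspondence in step (5), but these are minor variations; the skeleton is identical.

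What you do that the paper does not is diagnose the slope-uniformity issue explicitly. You correctly observe that \Cref{lem:eq-vanishing} as stated provides vanishing only for $0\le q\le q_0(k)$ with $q_0(k)$ a function of $k$ alone, whereas the GGD threshold requires controlling $q=\lfloor\lambda m\rfloor$ growing linearly in $m$. The paper's proof of \Cref{thm:main-threshold} simply cites \Cref{lem:eq-vanishing} in step (4) without reconciling this mismatch; the proof of that lemma in fact says "for $m\gg q$", which is the better formulation, but the lemma statement is weaker. Your proposed patch---tracking the curvature bound $m\,\varepsilon_k\,\omega_{\mathrm{vert}}-(C_k+q)\,\pi_k^*\omega_A$ (or more accurately the bound from \Cref{prop:propagation}, $m\omega_{\rm FS}+(m-q)\pi_k^*\omega_A$) quantitatively to extract a slope threshold $\lambda_{\max}(k)$ intrinsic to the coarse Kähler data---is exactly the step the paper implicitly relies on but never writes down. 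So this is the same argument, but you have tightened a link the paper leaves loose. One caveat: you should verify that the displayed curvature bound is genuinely Nakano positive and not merely nonnegative in a mixed-direction sense, since $\omega_{\mathrm{vert}}$ only controls vertical directions while $\pi_k^*\omega_A$ only controls horizontal ones; the correct statement requires the full decomposition of \Cref{prop:propagation}, which gives positivity in both factors and therefore global Nakano positivity for $q/m<1$.
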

	
	\begin{proof}
		We combine the following inputs established previously.
		
		\smallskip
		\noindent
		(1) \emph{Curvature package.}
		By \Cref{assump:curv}, $\omega_A>0$ on $Y$ and hence on $\X$ by pullback.
		The vertical negativity (H2) and the curvature propagation along the Demailly--Semple tower (\Cref{prop:propagation}) imply that $\OO_{\X_{k+1}}(1)$ is semipositive globally and strictly positive on vertical directions (\Cref{cor:semple-positivity}).
		These properties guarantee that the curvature forms contributing to the Chern character $\ch(E^{\mathrm{GG}}_{k,m})$ preserve sign under pullback.
		
		\smallskip
		\noindent
		(2) \emph{Asymptotic HRR formula.}
		From \Cref{prop:stack-ggd-hrr-input},
		\[
		\chi(\X,E_{k,m}\otimes L^{-q})
		=\frac{1}{s}\!\int_Y\ch(\underline{E}_{k,m})\,\Td(TY)+O(m^{n-1}),
		\]
		so the leading coefficient scales by $1/s$ but retains its positivity.
		
		\smallskip
		\noindent
		(3) \emph{Slope control.}
		By \Cref{prop:slope-control}, the asymptotic slope satisfies
		\[
		\mu_\X(E^{\mathrm{GG}}_{k,m}\otimes L^{-q})
		=\frac{1}{s}\,\mu_Y(E^{\mathrm{GG}}_{k,m}\otimes A^{-q})
		+O(m^{n-1}),
		\]
		hence the sign of the leading term—and thus GGD-positivity—is preserved.
		
		\smallskip
		\noindent
		(4) \emph{Vanishing and section counts.}
		By the orbifold HRR asymptotics and vanishing lemmas (\Cref{lem:eq-HRR}, \Cref{lem:eq-vanishing}), the higher cohomology groups of $E^{\mathrm{GG}}_{k,m}\otimes L^{-q}$ contribute only to lower-order terms.
		Thus $h^0$ shares the same leading asymptotic as $\chi$.
		
		\smallskip
		\noindent
		(5) \emph{Equivalence.}
		If $Y$ is GGD--positive at jet order $k_0$, then its leading HRR coefficient $\alpha_Y>0$.
		By steps~(2)–(4), the coefficient for $\X$ is $\alpha_\X=\alpha_Y/s>0$, and conversely $G$–invariant sections on $\X$ descend to coarse sections on $Y$.
		Therefore the GGD threshold equality holds.
	\end{proof}

The preceding theorem can be restated geometrically as follows, emphasizing that the hyperbolicity thresholds are intrinsic to the coarse Kähler class and unaffected by the stack structure.
	
	\begin{corollary}[Invariance of hyperbolicity threshold]
		\label{cor:invariance-threshold}
		Under the assumptions of \Cref{thm:main-threshold}, the minimal jet order and growth rate defining GGD--hyperbolicity depend only on the coarse Kähler class~$[\omega_A]$.
		Consequently, passing to an orbifold or stack compactification does not alter the analytic hyperbolicity threshold.
	\end{corollary}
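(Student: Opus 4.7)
The plan is to deduce the corollary directly from Theorem~\ref{thm:main-threshold} by recognizing that the GGD threshold quantities—the minimal jet order $k_0$ and the asymptotic slope $\lambda_0$—are encoded entirely in the leading asymptotics of $h^0(\X, E^{\mathrm{GG}}_{k,m}\otimes L^{-q})$. First I would make the thresholds precise: define $k_0(\X, L)$ as the infimum of $k$ for which there exist constants $\lambda > 0$ and $m_0 \ge 1$ with $h^0(\X, E^{\mathrm{GG}}_{k,m}\otimes L^{-\lfloor \lambda m\rfloor}) > 0$ for all $m \ge m_0$, and define $\lambda_0(\X, L)$ as the supremum of such admissible slopes at the minimal order. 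By Proposition~\ref{prop:stack-ggd-hrr-input} the existence of sections in this regime is governed by the sign of the leading polynomial in $(m,q)$ inside $\chi$.

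Second, I would invoke the vanishing input (Lemma~\ref{lem:eq-vanishing}), which ensures that for $(k,m,q)$ in the admissible range, $H^i=0$ for $i>0$, so $h^0=\chi$ and the threshold is detected exactly by the Hirzebruch--Riemann--Roch integral. On the stack side this integral equals
\[
\frac{1}{s}\int_Y \ch(\underline{E}_{k,m})\,e^{-q c_1(A)}\,\Td(TY) + O(m^{n-1}),
\]
whereas on $Y$ it equals $\int_Y \ch(E^{\mathrm{GG}}_{k,m})\,e^{-q c_1(A)}\,\Td(TY)$. Since the positive scalar $1/s$ preserves the sign of the leading coefficient, and since $\ch(\underline{E}_{k,m})$, $\Td(TY)$, and $c_1(A)$ are intrinsic to the pair $(Y,[\omega_A])$, the onset of positivity occurs at the same jet order $k$ and the same slope $\lambda$ whether one works on $\X$ or on $Y$.

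Third, I would argue invariance under rigidification and finite quotients. By Corollary~\ref{cor:rigidification}, the leading coefficient agrees (up to the same positive scalar $1/s$) on $\X$ and $\X^{\mathrm{rig}}$, and by Proposition~\ref{prop:generic-stab-normalization} the identity-sector integral coincides with the coarse integral on $Y$; since the rescaling factor is a fixed positive constant, it affects neither the sign nor the polynomial growth rate of the leading term. Combining this with step two shows that $k_0$ and $\lambda_0$ depend only on the pair $(Y,[\omega_A])$, giving both equivalences of the corollary simultaneously.

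The main obstacle will be controlling the $O(m^{n-1})$ twisted-sector corrections near the marginal slope $\lambda_0$, where the leading polynomial in $(m,q)$ degenerates and the lower-order contributions might a priori perturb the existence range. To rule this out I would restrict to a compact subinterval of slopes bounded away from $\lambda_0$, on which the leading Chern--Todd polynomial dominates uniformly for $m \gg 1$ by a standard scaling argument, and then recover the genuine threshold by passing to the infimum over such subintervals; Lemma~\ref{lem:sectorwise-degree-bound} and Lemma~\ref{lem:denominator-factorization} guarantee that the error is $O(m^{n-1})$ uniformly in $q=\lambda m$ for $\lambda$ in a compact set. Once this uniform dominance is in place, the coincidence of thresholds between $\X$ and $Y$ is immediate and the corollary follows.
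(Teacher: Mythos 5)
Your proposal is correct, and in fact you are essentially re-proving Theorem~\ref{thm:main-threshold} rather than merely deducing the corollary from it: the paper treats the corollary as an immediate geometric restatement of the theorem and supplies no separate argument, whereas you unfold the ingredients (vanishing via Lemma~\ref{lem:eq-vanishing}, the $1/s$-scaled identity-sector HRR asymptotics of Proposition~\ref{prop:stack-ggd-hrr-input}, rigidification invariance via Corollary~\ref{cor:rigidification}) in essentially the same order as the theorem's proof. What you add, and what the paper leaves implicit, is a precise definition of the threshold invariants $k_0(\X,L)$ and $\lambda_0(\X,L)$ and a careful treatment of the marginal slope: your observation that the $O(m^{n-1})$ twisted-sector corrections could in principle perturb the existence range near $\lambda_0$, and your remedy of restricting to compact subintervals of slopes where the leading coefficient is uniformly bounded away from zero and then passing to the infimum, is a genuine and worthwhile refinement. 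This uniform-dominance argument does require, as you note, that the error term in Lemma~\ref{lem:sectorwise-degree-bound} be uniform in $q=\lambda m$ for $\lambda$ in a compact set; this holds because the Kawasaki denominator is $m$- and $q$-independent (Lemma~\ref{lem:denominator-factorization}) and the twisted fixed loci have dimension at most $n-1$, so the numerator polynomial in $(m,q)$ is uniformly $O(m^{n-1})$ on such a set. With that in place your derivation is complete and slightly more careful than the paper's terse appeal to the theorem.
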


	
\end{document}